\theoremstyle{plain}
\newtheorem{thm}{Theorem}[section] 
\newtheorem{cor}[thm]{Corollary}
\newtheorem{prop}[thm]{Proposition}
\newtheorem{lem}[thm]{Lemma}
\newtheorem*{mainthm}{Theorem A}
\newtheorem*{mainthm2}{Theorem B}
\theoremstyle{definition} 
\newtheorem{defn}[thm]{Definition}
\newtheorem{setting}[thm]{Setting}
\newtheorem{eg}[thm]{Example} 
\newtheorem*{notation}{Notation}
\theoremstyle{remark}
\newtheorem{rem}[thm]{Remark}
\newtheorem*{claim}{Claim}
\newtheorem*{acknowledgement}{Acknowledgments}
\def\ge{\geqslant}
\def\le{\leqslant}
\def\phi{\varphi}
\def\epsilon{\varepsilon}
\def\tilde{\widetilde}
\newcommand{\sO}{\mathcal{O}}
\newcommand{\F}{\mathbb{F}}
\newcommand{\N}{\mathbb{N}}
\newcommand{\Q}{\mathbb{Q}} 
\newcommand{\C}{\mathbb{C}} 
\newcommand{\Z}{\mathbb{Z}}
\newcommand{\ba}{\mathfrak{a}}
\newcommand{\bb}{\mathfrak{b}}
\newcommand{\m}{\mathfrak{m}}
\newcommand{\q}{\mathfrak{q}}
\newsavebox{\circlebox}
\savebox{\circlebox}{\fontencoding{OMS}\selectfont\Large\char13}
\newlength{\circleboxwdht}
\def\Hom{\operatorname{Hom}}
\def\Spec{\operatorname{Spec}}
\def\Proj{\operatorname{Proj}}
\def\Supp{\operatorname{Supp}}
\def\Div{\operatorname{div}}
\def\ord{\operatorname{ord}}
\title{Arithmetic and geometric deformations of\\ $F$-pure and $F$-regular singularities}
\author{Kenta Sato}
\address{Faculty of Mathematics, Kyushu University, 744 Motooka, Nishi-ku, Fukuoka 819-0395, Japan}
\email{ksato@math.kyushu-u.ac.jp}
\author{Shunsuke Takagi}
\address{Graduate School of Mathematical Sciences, University of Tokyo, 3-8-1 Komaba, Meguro-ku, Tokyo 153-8914, Japan}
\email{stakagi@ms.u-tokyo.ac.jp}
\keywords{$F$-pure, strongly $F$-regular, log canonical, klt, deformation, modulo $p$ reduction}
\subjclass[2020]{13A35, 14B05, 14B07, 14D10, 14F18}
\begin{document}

\begin{abstract}
Given a normal $\Q$-Gorenstein complex variety $X$, we prove that if one spreads it out to a normal $\Q$-Gorenstein scheme $\mathcal{X}$ of mixed characteristic whose reduction $\mathcal{X}_p$ modulo $p$ has normal $F$-pure singularities for a single prime $p$, then $X$ has log canonical singularities. 
In addition, we show its analog for log terminal singularities, without assuming that $\mathcal{X}$ is $\Q$-Gorenstein, which is a generalization of a result of Ma-Schwede. 
We also prove that two-dimensional strongly $F$-regular singularities are stable under equal characteristic deformations. 
Our results give an affirmative answer to a conjecture of Liedtke-Martin-Matsumoto on deformations of linearly reductive quotient singularities. 
\end{abstract}

\maketitle
\markboth{K.~SATO and S.~TAKAGI}{DEFORMATIONS OF $F$-PURE AND $F$-REGULAR SINGULARITIES}

\section{Introduction}

$F$-singularities are singularities in positive characteristic defined via the Frobenius morphism. 
The link between $F$-singularities and singularities in  minimal model program has been studied intensively over the last two decades. 
To explain this link, we establish some notation. 
Let $(x \in X)$ be a normal $\Q$-Gorenstein singularity over $\C$, that is, 
$X=\Spec \C[X_1, \dots, X_r]/(f_1, \dots, f_s)$ 
is a normal $\Q$-Gorenstein affine variety over $\C$ and $x$ is a closed point of $X$. 
We assume for simplicity that $x$ corresponds to the maximal ideal $(X_1, \dots, X_r)$ and all the coefficients of the $f_i$ are rational numbers. 
Choose an integer $n \ge 1$ such that the $f_i$ are polynomials over $\Z[1/n]$ and $\mathcal{X}:=\Spec \Z[1/n][X_1, \dots, X_r]/(f_1, \dots, f_s)$ is flat over $U:=\Spec \Z[1/n] \subset \Spec \Z$. 
Let $\mathcal{Z}$ be the closed subscheme of $\mathcal{X}$ defined by $(X_1, \dots, X_r)$ and $x_{\eta}$ (resp.~$x_p$) be the unique point of the generic fiber $\mathcal{Z}_{\eta}$ (resp.~the fiber $\mathcal{Z}_p$ over each closed point $(p) \in U$) of the flat morphism $\mathcal{Z} \subseteq \mathcal{X} \to U$.  
Then $(x \in X)$ is a flat base change of the generic fiber $(x_{\eta} \in \mathcal{X}_{\eta})$,  
while the fiber $(x_p \in \mathcal{X}_p)$
over each closed point $(p) \in U$ is of characteristic $p>0$. 
The link mentioned above is provided by comparing the properties of $(x \in X)$ 
with those of the closed fibers $(x_p \in X_p)$. 
In this paper, pursuing this link, we study how mild the singularity $(x \in X)$ is when some closed fiber $(x_p \in X_p)$ is a mild $F$-singularity such as $F$-pure and strongly $F$-regular singularities. 

Hara-Watanabe \cite{HW} proved that $(x \in X)$ is a log terminal (resp.~log canonical) singularity if the closed fiber $(x_p \in \mathcal{X}_p)$ is strongly $F$-regular (resp.~$F$-pure) for infinitely many closed points $(p) \in U$. 
Recently, using perfectoid techniques, Ma-Schwede \cite{MS} developed a new theory of singularities in mixed characteristic, where BCM test ideals, a generalization of test ideals to mixed characteristic, play a central role. 
As an application of this theory, they proved, under the assumption that the total space $\mathcal{X}$ is $\Q$-Gorenstein,  
that $(x \in X)$ is log terminal if the closed fiber $(x_p \in \mathcal{X}_p)$ is strongly $F$-regular for a single closed point $(p) \in U$. 
In this paper, we generalize their result to the case where the total space $\mathcal{X}$ is not necessarily $\Q$-Gorenstein.
We also prove the analog of their result for log canonical singularities. 
A simple form of our first main result is stated (with renewed notation) as follows: 

\begin{mainthm}[\textup{cf.~Corollaries \ref{mixed lc local} and \ref{mixed klt local}}]
 Let $(\mathcal{X}, \mathcal{D}) \to U \subseteq \Spec \Z$ be a  flat family of pairs, where $\mathcal{X}$ is a normal integral scheme and $\mathcal{D}$ is an effective $\Q$-Weil divisor on $\mathcal{X}$. 
 Let $\mathcal{Z}$ be an irreducible closed subscheme of $\mathcal{X}$, flat over $U$, such that the fiber of $\mathcal{Z} \subseteq \mathcal{X} \to U$ over each closed point $(p) \in U$ is a singleton $\{x_p\}$ and let $x_{\eta} \in \mathcal{X}_{\eta}$ 
 be the generic point of $\mathcal{Z}$.     
\begin{enumerate}[label=$(\arabic*)$]
\item 
Suppose that the generic fiber $(\mathcal{X}_{\eta}, \mathcal{D}_{\eta})$ is log $\Q$-Gorenstein at $x_{\eta}$, that is, $K_{\mathcal{X}_{\eta}}+\mathcal{D}_\eta$ is $\Q$-Cartier at $x_\eta$.
If the fiber $(\mathcal{X}_p, \mathcal{D}_p)$ is log $\Q$-Gorenstein and strongly $F$-regular at $x_p$ for a single closed point $(p) \in U$, then $(\mathcal{X}_{\eta}, \mathcal{D}_{\eta})$ is klt at $x_{\eta}$. 
\item
If the total space $(\mathcal{X}, \mathcal{D})$ is log $\Q$-Gorenstein at $x_p$ and the fiber $(\mathcal{X}_p,  \mathcal{D}_p)$ is normal and (sharply) $F$-pure at $x_p$ for a single closed point $(p) \in U$, then the generic fiber $(\mathcal{X}_{\eta}, \mathcal{D}_{\eta})$ is log canonical at $x_{\eta}$. 
\end{enumerate}
\end{mainthm}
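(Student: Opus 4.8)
Both statements are local at $x_p$ and $x_\eta$, so we may replace the germ $(\mathcal{X},\mathcal{D})$ at $x_p$ by $\Spec R$ for $R:=\widehat{\mathcal{O}_{\mathcal{X},x_p}}$, a complete local normal domain of mixed characteristic $(0,p)$, with $\Delta$ the completion of $\mathcal{D}$; then $R/pR$ is the completed local ring of $(\mathcal{X}_p,\mathcal{D}_p)$ at $x_p$, the fiber $\mathcal{X}_p$ is the Cartier divisor $\Div(p)$ on $\Spec R$ (reduced, since $\mathcal{X}_p$ is normal), and the germ $(x_\eta\in\mathcal{X}_\eta,\mathcal{D}_\eta)$ is recovered by inverting $p$ and localizing at the prime of $R$ cutting out $\mathcal{Z}$. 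The engine is André's theorem guaranteeing a big Cohen--Macaulay $R^+$-algebra $B$, together with the resulting theory of BCM test ideals and BCM test submodules of the canonical module (Ma--Schwede). From that theory I need two inputs: an \emph{adjunction/deformation} principle, to the effect that sharp $F$-purity, respectively strong $F$-regularity, of the special fiber forces ``BCM-purity'', respectively ``BCM-regularity'', of $(R,\Delta)$ near the closed point; and a \emph{comparison} principle, that for a $\Q$-Gorenstein germ essentially of finite type over $\Q$ these BCM conditions are equivalent to log canonicity, respectively to being klt.

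For (1), fix $B$ and work with the BCM test submodule $\tau_B(\omega_R,\Delta)\subseteq\omega_R$, which is defined with no $\Q$-Gorenstein hypothesis on $R$; this is precisely what lets us drop Ma--Schwede's condition on the total space. Since $(\mathcal{X}_p,\mathcal{D}_p)$ is log $\Q$-Gorenstein and strongly $F$-regular at $x_p$, the plan is to prove a perfectoid inversion-of-adjunction statement along $\mathcal{X}_p=\Div(p)$: strong $F$-regularity of $R/pR$ produces test data surviving in $B$, and a Nakayama-type argument over the complete local ring $R$ then forces $\tau_B(\omega_R,\Delta+\Div(p))=\omega_R$ in a neighborhood of the closed point, hence $\tau_B(\omega_R,\Delta)=\omega_R$ as well. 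Finally, $\omega_{\mathcal{X}/U}$ commutes with the base change to the generic fiber, and since $K_{\mathcal{X}_\eta}+\mathcal{D}_\eta$ is $\Q$-Cartier at $x_\eta$, the localization of $\tau_B(\omega_R,\Delta)$ at $x_\eta$ is the usual multiplier submodule of $\omega_{\mathcal{X}_\eta}$ twisted by $\mathcal{D}_\eta$; its being the whole module is equivalent to $(\mathcal{X}_\eta,\mathcal{D}_\eta)$ being klt at $x_\eta$.

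For (2) we may assume $K_R+\Delta$ is itself $\Q$-Cartier (this is the hypothesis at $x_p$, and it propagates to $x_\eta$ since the $\Q$-Cartier locus is open), so one can argue directly with the BCM test ideal $\tau_B(R,\Delta)$. Sharp $F$-purity of $(\mathcal{X}_p,\mathcal{D}_p)$ at $x_p$ says that a suitable Frobenius-twisted evaluation map of $R/pR$ splits; its perfectoid avatar is the purity of the $\Delta$-twisted map $R\to B/pB$, and an adjunction along $\Div(p)$ --- now for purity in place of regularity --- upgrades this to purity of the $\Delta$-twisted map $R\to B$, i.e.\ $(R,\Delta)$ is BCM-pure. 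Passing to the germ at $x_\eta$ and using the $\Q$-Cartier hypothesis there, BCM-purity descends to the statement that the non-log-canonical (adjoint-type) test ideal of $(\mathcal{X}_\eta,\mathcal{D}_\eta)$ at $x_\eta$ is the unit ideal, which is equivalent to $(\mathcal{X}_\eta,\mathcal{D}_\eta)$ being log canonical at $x_\eta$.

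The hard part in (1) is the lack of any $\Q$-Gorenstein hypothesis on $\mathcal{X}$: the deformation argument must be run at the level of the canonical module $\omega_R$, so one needs a version of the perfectoid restriction/inversion-of-adjunction theorem for BCM test submodules of $\omega$ rather than for BCM test ideals, and one must check that it is compatible both with the specialization to the ($\Q$-Gorenstein) generic fiber and with localization at $\mathcal{Z}$ --- in particular, that the relevant dualizing and reflexive sheaves commute with inverting $p$. In (2) the delicate point is that $F$-purity behaves far less robustly under perfectoid methods than $F$-regularity: the ``purity deforms from $\mathcal{X}_p$ to $\mathcal{X}$'' step has essentially no slack, which is why \emph{sharp} $F$-purity (rather than a slightly perturbed $F$-splitting) is exactly what is needed, and one must arrange the big Cohen--Macaulay algebra $B$ so that it simultaneously witnesses purity along $\Div(p)$ on $R$ and on $R/pR$.
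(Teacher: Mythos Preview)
Your overall reduction to the complete local ring $R=\widehat{\mathcal{O}_{\mathcal{X},x_p}}$ with $S=R/pR$ is fine and matches the paper. But both of your strategies diverge from the paper's, and the one for (2) has a real gap.

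\textbf{Part (2).} You propose: sharp $F$-purity of the fiber $\Rightarrow$ ``BCM-purity'' of $(R,\Delta)$ $\Rightarrow$ log canonicity at $x_\eta$ via a BCM analogue of the non-lc ideal. The second implication is the problem. The only comparison available in this setting is $\tau_{\mathcal{B}}(R,\Delta,\ldots)\subseteq\mathcal{J}(R,\Delta,\ldots)$, which gives BCM-regular $\Rightarrow$ klt; there is no BCM ``non-lc ideal'' theory in the paper (or in Ma--Schwede) that would let you conclude log canonicity from a purity-type statement. The paper avoids this entirely by running Kawakita's iterative argument: assuming $(R,\Delta)$ is \emph{not} log canonical, a divisor $E$ with discrepancy $<-1$ produces a descending chain $\bb_n=f_*\mathcal{O}_Y(-nE)$ and exponents $\varepsilon_n$ with $\mathcal{J}(R,\Delta,\bb_n^{1-\varepsilon_n})\subseteq\bb_{n+1}$; sharp $F$-purity of $(S,\Delta|_S)$ gives a fixed nonzero $J\subseteq S$ with $J\subseteq\tau(S,\Delta|_S,J^{1-\varepsilon})$ for all $\varepsilon$; and the chain
\[
J\subseteq\tau(S,\Delta|_S,(\bb_nS)^{1-\varepsilon_n})\subseteq\tau_{\mathcal{B}}(R,\Delta,\bb_n^{1-\varepsilon_n})S\subseteq\mathcal{J}(R,\Delta,\bb_n^{1-\varepsilon_n})S\subseteq\bb_{n+1}S
\]
forces $J\subseteq\bigcap_n\bb_nS=0$, a contradiction. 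The point is that only the \emph{klt-type} inclusion $\tau_{\mathcal{B}}\subseteq\mathcal{J}$ is used, but iteratively with varying auxiliary ideals; no notion of BCM-purity or BCM non-lc ideal is needed.

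\textbf{Part (1).} Your idea of working with $\tau_B(\omega_R,\Delta)\subseteq\omega_R$ to bypass the $\Q$-Gorenstein hypothesis on $R$ is reasonable in spirit, but the paper takes a different and more elementary route that stays entirely within the BCM \emph{test ideal} framework (which, as set up here, requires $K_X+\Delta$ to be $\Q$-Cartier). The trick, going back to Esnault--Viehweg, is to choose an effective Weil divisor $A\sim -K_X$ on $X=\Spec R$ with $B:=A-\Delta\ge 0$, so that $K_X+A\sim 0$ is Cartier, and to compare the ideals
\[
I:=\tau_{\mathcal{B}}(X,A,\bb^{1-1/m})\quad\text{and}\quad J:=\tau(Z,A|_Z,(\bb S)^{1-1/m}),\qquad \bb:=\mathcal{O}_X(-mB),
\]
where $m$ is chosen so that $m(K_Z+\Delta|_Z)$ is Cartier. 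One shows $I\subseteq\bb$ always, while strong $F$-regularity of $(Z,\Delta|_Z)$ gives $\bb S\subseteq J\subseteq IS$ via adjunction; since $\bb\cap(p)=p\bb$, Nakayama yields $I=\bb$, and unwinding on the locus where $K_X+\Delta$ is $\Q$-Cartier (in particular at $x_\eta$) gives klt. So the non-$\Q$-Gorenstein difficulty is absorbed into the auxiliary ideal $\bb^{1-1/m}$ rather than into a submodule of $\omega_R$; this avoids having to develop adjunction and localization for BCM test submodules.
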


This theorem is quite useful for verifying that a given singularity is log terminal or log canonical, because strong $F$-regularity and $F$-purity can be relatively easily checked by a computer algebra system such as Macaulay 2, compared with constructing a resolution of singularities (see Remark \ref{algorithm}). 

We briefly explain the idea of the proof of Theorem A. 
First we generalize the notion of the BCM test ideal $\tau_{\mathcal{B}}(R, \ba^\lambda)$, which was defined in \cite{MS} when $\ba$ is a principal ideal, to the case of an arbitrary ideal $\ba$. 
Replacing multiplier ideals by our BCM test ideals, one can employ the arguments of Kawakita \cite{Kaw}, which prove that log canonical singularities over $\C$ deform to log canonical singularities if the total space is $\Q$-Gorenstein, to obtain the assertion (2). 
Similarly, one can use arguments analogous to those of Esnault-Viehweg \cite{EV}, which prove that two-dimensional log terminal singularities over $\C$ deform to log terminal singularities, to obtain the assertion (1). 

Theorem A concerns arithmetic deformations of $F$-singularities, and next we discuss geometric deformations of $F$-singularities. 
In general, log terminal singularities over $\C$ are not stable under small deformations unless the total space is $\Q$-Gorenstein. 
A notable exception is the two-dimensional case, settled by \cite{EV} as mentioned above.  
In fact, their proof tells us that higher-dimensional log terminal singularities over $\C$ are also stable under small deformations if the nearby fibers are $\Q$-Gorenstein. 
Since strongly $F$-regular singularities can be viewed as an $F$-singularity theoretic counterpart of log terminal singularities, it is natural to ask how strongly $F$-regular singularities behave under equal characteristic deformations. 
Strong $F$-regularity does not deform in general (see \cite{Sin}), but 
we prove that the analog of the result of Esnault-Viehweg holds for strongly $F$-regular singularities. 
A simple form of our second main result is stated as follows: 

\begin{mainthm2}[\textup{cf.~Corollaries \ref{mixed klt proper} and \ref{general SFR fibers}}]
Let $(\mathcal{X}, \mathcal{D}) \to T$ be a proper flat family of pairs, where $\mathcal{X}$ is a normal variety over a perfect field $k$ of characteristic $p>0$,  
$\mathcal{D}$ is an effective $\Q$-Weil divisor on $\mathcal{X}$, and $T$ is a smooth curve over $k$. 
\begin{enumerate}[label=$(\arabic*)$]
\item
Suppose that the generic fiber $(\mathcal{X}_{\eta}, \mathcal{D}_\eta)$ and some closed fiber $(\mathcal{X}_{t_0}, \mathcal{D}_{t_0})$ are log $\Q$-Gorenstein. 
If $(\mathcal{X}_{t_0}, \mathcal{D}_{t_0})$ is strongly $F$-regular,  
then so is the geometric generic fiber $(\mathcal{X}_{\overline{\eta}}, \mathcal{D}_{\overline{\eta}})$. 
\item
Suppose that a general closed fiber $(\mathcal{X}_{t}, \mathcal{D}_t)$ and some closed fiber $(\mathcal{X}_{t_0}, \mathcal{D}_{t_0})$ are log $\Q$-Gorenstein. 
If $k$ is an uncountable algebraically closed field and $(\mathcal{X}_{t_0}, \mathcal{D}_{t_0})$ is strongly $F$-regular,  
then so is the general closed fiber $(\mathcal{X}_{t}, \mathcal{D}_t)$. 
\end{enumerate}
\end{mainthm2}
As its corollary, we see that two-dimensional strongly $F$-regular singularities are stable under equal characteristic deformations. 
The proof of Theorem B is similar to that of Theorem A (2), but we use classical test ideals instead of BCM test ideals. 

Finally, Theorems A and B enable us to give an affirmative answer to a conjecture of Liedtke-Martin-Matsumoto \cite[Conjecture 12.1 (1)]{LMM}, which states that isolated linearly reductive quotient singularities deform to linearly reductive quotient singularities in arbitrary characteristic. 

\begin{acknowledgement}
The authors are grateful to Linquan Ma, Yuya Matsumoto, and Karl Schwede for their helpful comments on a preliminary version of this paper. 
The second author would like to thank Paolo Cascini and Hiromu Tanaka for fruitful discussions. 
He also would like to express his gratitude to Imperial College London, where a part of this work was done, for their hospitality during the summer of 2018. 
The first author was partially supported by RIKEN iTHEMS Program and JSPS KAKENHI Grant Number 20K14303.
The second author was partially supported by JSPS KAKENHI Grant Numbers JP15H03611, JS15KK0152, JP16H02141, and JP17H02831. 
\end{acknowledgement}

\begin{notation}
Throughout this paper, all rings are assumed to be commutative and with unit element and all schemes are assumed to be Noetherian and separated. 
\end{notation}

\section{Preliminaries}

\subsection{Singularities in MMP}
In this subsection, we recall the definition and basic properties of singularities in minimal model program (MMP, for short). 

Throughout this subsection, we assume that $X$ is an excellent normal integral scheme with dualizing complex $\omega_X^\bullet$. 
The \textit{canonical sheaf} $\omega_X$ associated to $\omega_X^{\bullet}$ is the coherent $\sO_X$-module defined as the first nonzero cohomology module of $\omega_X^\bullet$.
A \textit{canonical divisor} of $X$ associated to $\omega_X^\bullet$ is any Weil divisor $K_X$ on $X$ such that $\sO_X(K_X) \cong \omega_X$. 
We fix a canonical divisor $K_X$ of $X$ associated to $\omega_X^\bullet$. 
Given a proper birational morphism $\pi:Y \to X$ from a normal integral scheme $Y$, we always choose a canonical divisor $K_Y$ of $Y$ which is associated to $\pi^! \omega_X^{\bullet}$ and whose pushforward $f_*K_Y$ agrees with $K_X$. 

\begin{defn}
A proper birational morphism $f: Y \to X$ between integral schemes is said to be a \textit{resolution of singularities} 
of $X$ if $Y$ is regular. 
When $\Delta$ is a $\Q$-Weil divisor on $X$ and $\ba, \bb\subseteq \sO_X$ are nonzero coherent ideal sheaves,  
a resolution $f:Y \to X$ is said to be a \textit{log resolution} of $(X, \Delta, \ba, \bb)$ if $\ba \sO_Y = \sO_Y(-F)$ and $\bb \sO_Y=\sO_Y(-G)$ are invertible 
and if the union of the exceptional locus $\mathrm{Exc}(f)$ of $f$ and the supports of $F$, $G$ and the strict transform $f^{-1}_*\Delta$ of $\Delta$ is a simple normal crossing divisor.  
\end{defn}

First we give the definition of singularities in MMP that makes sense in arbitrary characteristic. 
\begin{defn}\label{mmp}
Suppose that 
$\Delta$ is an effective $\Q$-Weil divisor on $X$ such that $K_X+\Delta$ is $\Q$-Cartier, $\ba \subseteq \sO_X$ is a nonzero coherent ideal sheaf, and $\lambda > 0$ is a real number. 
\begin{enumerate}[label=(\roman*)]
\item 
Given a proper birational morphism $f:Y \to X$ from a normal integral scheme $Y$, we write  
\[
\Delta_Y : = f^*(K_X+\Delta)-K_Y.
\]
When $\ba \sO_Y=\sO_Y(-F)$ is invertible, for each prime divisor $E$ on $Y$, the \textit{discrepancy} $a_{E}(X, \Delta, \ba^\lambda)$ of the triple $(X, \Delta, \ba^\lambda)$ at $E$ is defined as 
\[
a_{E} (X, \Delta, \ba^\lambda) : = - \ord_E( \Delta_Y + \lambda F). 
\] 
\item 
The triple $(X, \Delta, \ba^\lambda)$ is said to be \textit{log canonical} (resp.~\textit{klt}) at a point $x \in X$ if $a_E(\Spec \sO_{X,x}, \Delta_x, \ba_x^{\lambda}) \ge -1$ (resp.~$ >-1$) for every proper birational morphism $f:Y \to \Spec \sO_{X,x}$ from a normal integral scheme $Y$ with $\ba \sO_Y$ invertible and for every prime divisor $E$ on $Y$, 
where $\Delta_x$ is the flat pullback of $\Delta$ by the canonical morphism $\Spec \sO_{X,x} \to X$ and $\ba_x:=\ba \sO_{X,x}$. 
We say that $(X, \Delta, \ba^\lambda)$ is \textit{log canonical} (resp.~\textit{klt}) if it is log canonical (resp.~klt) for every $x \in X$. 
\item (\cite{LT}) $X$ is said to be \textit{pseudo-rational} at a point $x \in X$ if $X$ is Cohen-Macaulay and if for every projective birational morphism $f: Y \to \Spec \sO_{X,x}$ from a normal integral scheme $Y$, the natural morphism $f_* \omega_Y \to \omega_{X,x}$ is an isomorphism. 
\end{enumerate}
\end{defn}

\begin{rem}\label{remark rat}
\ 
\begin{enumerate}[label=(\roman*)]
\item 
(\cite[Theorem 1.4]{Kov}) If $X$ is pseudo-rational at $x$ and $f: Y \to \Spec \sO_{X,x}$ is a resolution of singularities, then $R^i f_*\sO_Y=0$ for all $i>0$.

\item (\cite[Proposition 17.1]{Lip}) If $X$ is pseudo-rational at $x$ and $\dim \sO_{X,x} =2$, then $X$ is $\Q$-factorial at $x$.

\item (\cite[Theorem 5.22]{KM}) Let $\Delta$ be an effective $\Q$-Weil divisor on $X$ such that $K_X+\Delta$ is $\Q$-Cartier.
If $X$ is essentially of finite type over a field of characteristic zero and $(X,\Delta)$ is klt at $x$, then $X$ is pseudo-rational at $x$. 
\end{enumerate}
\end{rem}

\begin{defn}
Let $(X,\Delta, \ba^\lambda)$ be as in Definition \ref{mmp}.
The \textit{multiplier ideal sheaf} $\mathcal{J}(X,\Delta,\ba^\lambda)$ associated to $(X,\Delta, \ba^\lambda)$ is defined as  
\[
\mathcal{J}(X,\Delta,\ba^\lambda) : = \bigcap_{f: Y \to X} f_*\sO_Y(- \lfloor \Delta_Y + \lambda F \rfloor),
\]
where $f: Y \to X$ runs through all proper birational morphisms from a normal integral scheme $Y$ with $\ba \sO_Y= \sO_Y(- F)$ invertible and $\Delta_Y := f^*(K_X+\Delta) -K_Y$. 
\end{defn}

\begin{rem}\label{resolution}
Let $(X,\Delta, \ba^\lambda)$ be as in Definition \ref{mmp}.
\begin{enumerate}[label=(\roman*)]
\item For any point $x \in X$, we have $\mathcal{J}(X,\Delta, \ba^\lambda)_x \subseteq \mathcal{J}(\Spec \sO_{X,x}, \Delta_x, \ba_x^\lambda)$.
In particular, if $\mathcal{J}(X,\Delta, \ba^\lambda)_x = \sO_{X,x}$, then $(X,\Delta, \ba^\lambda)$ is klt at $x$.

\item If $f: Y \to X$ is a log resolution of $(X,\Delta,\ba)$, then 
\[
\mathcal{J}(X,\Delta,\ba^\lambda) = f_* \sO_Y(-\lfloor \Delta_Y+ \lambda F \rfloor).
\]
In particular, if $X$ is defined over a field of characteristic zero or $\dim X \le 2$, then the following hold:
\begin{enumerate}[label=(\alph*)]
\item the converse of (i) is true, 
\item $\mathcal{J}(X,\Delta, \ba^\lambda)$ is coherent, 
\item it is enough to check the condition in Definition \ref{mmp} (ii) only for one $f$, namely, for a log resolution of $(X, \Delta, \ba^\lambda)$. 
\end{enumerate}
\end{enumerate}
\end{rem}

The following lemma is probably well-known to experts, but we include a proof for the reader's convenience. 
\begin{lem}\label{regular map and lc}
Suppose that $(R,\m, \kappa) \to (R' ,\m',\kappa')$ is a flat local homomorphism of excellent local rings with dualizing complexes.
Assume that we have $\m R' =\m'$ and $\kappa'$ is separable over $\kappa$.
Let $f: X' : = \Spec R' \to X : = \Spec R$ be the induced morphism sending a point $x' \in X'$ to a point $x \in X$, $\Delta$ be an effective $\Q$-Weil divisor on $X : = \Spec R$ such that $K_X+\Delta$ is $\Q$-Cartier at $x$, $\ba \subseteq R$ be a nonzero ideal sheaf and $\lambda > 0$ be a real number.
\begin{enumerate}[label=$(\arabic*)$]
\item The flat pullback $K_{X'} : = f^*K_X$ of $K_X$ by the induced morphism $f: X' : = \Spec R' \to X$ is a canonical divisor of $X'$. 
In particular, $K_{X'} +\Delta'$ is $\Q$-Cartier at $x'$, where $\Delta' := f^*\Delta$ is the flat pullback of $\Delta$.  
\item If $(X', \Delta', (\ba \sO_{X'})^\lambda)$ is log canonical $($resp.~klt$)$ at $x'$, then so is $(X, \Delta, \ba^\lambda)$ at $x$.
\item If $X$ is defined over a field of characteristic zero, then the converse of $\textup{(2)}$ also holds. 
\end{enumerate}
\end{lem}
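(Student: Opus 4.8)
The plan is to localize at $x$ and $x'$ and so assume $X=\Spec A$, $X'=\Spec B$ with $A=\sO_{X,x}$, $B=\sO_{X',x'}$ excellent normal local domains carrying dualizing complexes and $x,x'$ the closed points. The one structural fact I would extract from the hypotheses is that $f\colon X'\to X$ is a \emph{regular morphism} (faithfully flat with geometrically regular fibres); this is where separability of $\kappa'/\kappa$ is essential, as it makes the closed fibre $\Spec\kappa'$ geometrically regular over $\kappa$, and, $R$ and $R'$ being excellent, this together with $\m R'=\m'$ propagates to all fibres — one passes to completions (using that $R\to\widehat R$, $R'\to\widehat{R'}$ are regular and that $\widehat R\to\widehat{R'}$ inherits the hypotheses) and applies the Cohen structure theorem to present $\widehat{R'}$, up to completion, as a base change of $\widehat R$ along the separable extension $\kappa'/\kappa$. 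A localization of a regular morphism is regular, so after the reduction $A\to B$ is a flat local regular homomorphism; in particular $B$ is a normal domain and the fibres of $A\to B$ are geometrically regular, hence Gorenstein.

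For (1): as the closed fibre $B/\m_AB$ is regular, hence Gorenstein, $\omega_A^\bullet\otimes_AB$ is a dualizing complex for $B$, and since $B$ is local it differs from $\omega_B^\bullet$ only by a shift, so comparing the first nonzero cohomology gives $\omega_B\cong\omega_A\otimes_AB$. Letting $j\colon U\into X$ be the inclusion of the regular locus, so that $\operatorname{codim}(X\setminus U,X)\ge2$ and hence $\operatorname{codim}(X'\setminus U',X')\ge2$ for $U':=f^{-1}(U)$ by flatness, flat base change and normality of $B$ yield
\[
f^*\sO_X(K_X)=f^*j_*\!\left(\sO_X(K_X)|_U\right)=j'_*\!\left(\sO_{U'}(f^*K_X|_{U'})\right)=\sO_{X'}(f^*K_X).
\]
Therefore $\sO_{X'}(f^*K_X)\cong f^*\omega_X\cong\omega_{X'}$, so $K_{X'}:=f^*K_X$ is a canonical divisor of $X'$; and if $m(K_X+\Delta)$ is Cartier near $x$, then $m(K_{X'}+\Delta')=f^*\bigl(m(K_X+\Delta)\bigr)$ is Cartier near $x'$.

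For (2): suppose $(X,\Delta,\ba^\lambda)$ is not log canonical (resp.\ not klt) at $x$ and choose a proper birational $g\colon Y\to X$ from a normal integral scheme with $\ba\sO_Y=\sO_Y(-F)$ invertible and a prime divisor $E$ on $Y$ with $a_E(X,\Delta,\ba^\lambda)<-1$ (resp.\ $\le-1$). Put $Y_B:=Y\times_XX'$ with projections $g'\colon Y_B\to X'$, $q\colon Y_B\to Y$. Then $Y_B$ is normal, as $q$ is the base change of the regular morphism $f$ and $Y$ is normal; it is integral, since $g_*\sO_Y=\sO_X$ (Stein factorization, $X$ normal) base-changes along the flat $f$ to $g'_*\sO_{Y_B}=\sO_{X'}$, so that $g'$ has connected fibres and $Y_B$ is connected over the local base $X'$. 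Thus $g'$ is a proper birational morphism from a normal integral scheme with $\ba\sO_{Y_B}=\sO_{Y_B}(-q^*F)$ invertible. Moreover $q$ is faithfully flat with geometrically reduced fibres, so $q^{-1}(E)$ is a nonempty reduced divisor; picking a prime component $E'$ and localizing at $\eta_{E'},\eta_E$ reduces the discrepancy comparison to the (unramified) flat local homomorphism $\sO_{Y,\eta_E}\to\sO_{Y_B,\eta_{E'}}$ of discrete valuation rings, to which (1) applies, giving, together with $f^*K_X=K_{X'}$ and $f^*\Delta=\Delta'$,
\[
a_{E'}\!\left(X',\Delta',(\ba\sO_{X'})^\lambda\right)=a_E(X,\Delta,\ba^\lambda)<-1\quad(\text{resp.}\ \le-1),
\]
contradicting log canonicity (resp.\ kltness) of $(X',\Delta',(\ba\sO_{X'})^\lambda)$ at $x'$.

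For (3), with $X$ over a field of characteristic zero, take a log resolution $\pi\colon Z\to X$ of $(X,\Delta,\ba)$; exactly as in (2), $Z_B:=Z\times_XX'\to X'$ is a proper birational morphism from an integral scheme, and now $Z_B$ is \emph{regular} ($Z_B\to Z$ is a regular morphism) with the exceptional locus and the relevant strict transforms and supports still simple normal crossing, so $Z_B\to X'$ is a log resolution of $(X',\Delta',\ba\sO_{X'})$. By Remark~\ref{resolution}(ii)(c) it suffices to check log canonicity (resp.\ kltness) at $x'$ on $Z_B$: prime divisors of $Z_B$ over prime divisors of $Z$ keep their discrepancies by the computation in (2), while the remaining prime divisors of $Z_B$ lie over the generic point of $Z$ (a dimension count, $Z$ regular), hence meet only the regular generic fibre of $f$ and have nonnegative discrepancy; so log canonicity (resp.\ kltness) of $(X,\Delta,\ba^\lambda)$ at $x$ yields the same for $(X',\Delta',(\ba\sO_{X'})^\lambda)$ at $x'$. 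The one genuinely delicate step is establishing, in the first paragraph, that $f$ is a regular morphism; the rest is bookkeeping with canonical divisors and discrepancies under base change.
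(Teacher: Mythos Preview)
Your proof follows the same strategy as the paper's: establish that $f$ is a regular morphism, then base-change proper birational morphisms (log resolutions for (3)) along $f$ and compare discrepancies. Your integrality argument for $Y_B$ via Stein factorization differs cosmetically from the paper's argument via the generic fibre, and your explicit treatment of the ``vertical'' divisors in (3) is, if anything, more careful than the paper's one-line appeal to Remark~\ref{resolution}.

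The one substantive gap is your sketch for regularity of $f$. Presenting $\widehat{R'}$ ``up to completion, as a base change of $\widehat R$ along the separable extension $\kappa'/\kappa$'' via Cohen structure is not a proof: even in equal characteristic, lifting a coefficient field of $\widehat R$ to one of $\widehat{R'}$ compatibly already uses formal smoothness of $\kappa'/\kappa$, and the passage from there to a completed-tensor-product description requires further work; in mixed characteristic the picture is worse. The paper instead argues directly: the closed fibre $\Spec\kappa'\to\Spec\kappa$ is formally smooth (separability), hence so is the flat local map $R\to R'$ with $\m R'=\m'$ by \cite[Theorem 28.10]{Mat}, and then Andr\'e's theorem \cite{And} on localization of formal smoothness gives that every fibre of $f$ is geometrically regular. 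Since you already flagged this as the delicate step, substituting this standard argument closes the only real gap.
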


\begin{proof}
(1) First note by \cite[Lemma 0AWD]{Sta} that $\omega_{X'}^{\bullet} : = f^* \omega_X^{\bullet}$ is a dualizing complex on $X'$. 
Since $f^* \sO_X(K_X) \cong \sO_{X'}(f^*K_{X})$, we see that $f^* K_X$ is a canonical divisor of $X'$ associated to $\omega_{X'}^{\bullet}$. 

(2) Since the closed fiber $\Spec \kappa'$ of $f$ is formally smooth over $\Spec \kappa$, it follows from \cite[Theorem 28.10]{Mat} and \cite{And} that $f$ is a regular morphism, that is, all fibers are geometrically regular.
We set $V: = \Spec \sO_{X,x}$ and $V' : = \Spec \sO_{X',x'}$.
Then the induced morphism $h : V' \to V$ is regular, $\omega_V^{\bullet} : = \omega_X^{\bullet}|_V$ is a dualizing complex of $V$ and $\omega_{V'}^{\bullet} : = \omega_{X'}^{\bullet}|_{V'} \cong h^{*} \omega_V^{\bullet}$ is a dualizing complex of $V'$.

Let $\pi:Y \to V$ be a proper birational morphism from a normal integral scheme $Y$ with $\ba \sO_Y=\sO_Y(-F)$ invertible. 
Set $Y':= V' \times_V Y$ and let $\pi' : Y' \to V'$ and $g: Y' \to Y$ 
be the first and second projections, respectively. 

\begin{claim} 
  $Y'$ is a normal integral scheme. 
\end{claim}
\begin{proof}
Let $K$ be a function field of $V$ and $Y$. First note that
\[ 
Y' \times_{Y} \Spec K \cong V' \times_V \Spec K,
\]
because $\pi$ is birational. 
Since $V' \times_V \Spec K$ is irreducible, so is $Y' \times_{Y} \Spec K$.
Taking into account that any generic point of $Y'$ lies in the generic fiber $Y' \times_{Y} \Spec K$ of $g$, we see that $Y'$ is irreducible. 
It then follows from \cite[p.184 Corollary]{Mat} that $Y'$ is a normal integral scheme. 
\end{proof}

Fix a canonical divisor $K_Y$ of $Y$ associated to the dualizing complex $\omega_Y^{\bullet} : = \pi^{!} \omega_V^{\bullet}$ such that $\pi_* K_Y =K_V$.
We write $\Delta_Y := \pi^*(K_V+\Delta|_V) - K_Y$, and let $F' : = g^*F$, $\Delta_{Y'} := g^* \Delta_Y$ and $K_{Y'} : = g^* K_Y$ be the flat pullbacks of $F$, $\Delta_Y$ and $K_Y$, respectively. 
Then $\ba  \sO_{Y'}=\sO_{Y'}(-F')$ and $\Delta_{Y'} = {\pi'}^{*}(K_V' + \Delta') - K_{Y'}$.
Noting that $K_{Y'}$ is a canonical divisor of $Y'$ associated to ${\pi'}^{!} ( \omega_{V'}^{\bullet}) \cong g^* \omega_Y^{\bullet}$ (see \cite[Lemma 0AA8]{Sta}) such that ${\pi'}_*K_{Y'} = K_{X'}$, we deduce  
\[
-\ord_G ( g^* (\Delta_Y + \lambda F))
=- \ord_G( \Delta_{Y'} + \lambda F')
=a_{G}(V',\Delta'|_{V'}, (\ba \sO_{V'})^\lambda)
\]
for every prime divisor $G$ on $Y'$. 
Since $g$ is regular, the flat pullback $g^* E$ of a prime divisor $E$ on $Y$ is a reduced divisor on $Y'$ (see \cite[p.184 Corollary]{Mat} again).
Therefore, for any irreducible component $E'$ of $g^{-1}(E)$, we have 
\begin{align*}
 a_E(V,\Delta|_V, (\ba \sO_V)^\lambda)=-\ord_{E} (\Delta_Y + \lambda F)&=-\ord_{E'} (g^* (\Delta_Y + \lambda F))\\
 &=a_{E'}(V', \Delta'|_{V'}, (\ba \sO_{V'})^\lambda),
\end{align*}
which proves (2).

(3) We choose a log resolution of $(V, \Delta|_V ,\ba \sO_V)$ to be $\pi$ in the proof of (2). 
It then follows from \cite[Theorem 23.7]{Mat} that $\pi'$ is also a log resolution of $(V', \Delta'|_{V'} ,\ba \sO_{V'})$.
Therefore, the assertion is immediate from Remark \ref{resolution}.
\end{proof}

\subsection{\texorpdfstring{$F$}{F}-singularities}
We briefly review the theory of $F$-singularities, especially focusing on strongly $F$-regular and $F$-pure singularities and test ideals. 

\begin{defn}[\textup{\cite{HW}, \cite{Tak1}, \cite{Sch0}, \cite{Sch1}}]\label{F-sing def}
Let $x$ be a point of an $F$-finite normal integral scheme $X$ and $\Delta$ be an effective $\Q$-Weil divisor on $X$.
Given an integer $e \ge 1$, let 
\[\varphi^{(e)}_{\Delta}: \sO_X \to F^e_*\sO_X \hookrightarrow F^e_*\sO_X(\lceil (p^e-1)\Delta \rceil)\]
be  
the composite of the $e$-times iterated Frobenius map $\sO_X \to F^e_*\sO_X$ and the pushforward 
of the natural inclusion $\sO_X \hookrightarrow \sO_X(\lceil (p^e-1)\Delta \rceil)$ by $F^e$. 
Let $\ba_1, \dots, \ba_\ell \subseteq \sO_X$ be nonzero coherent ideal sheaves and $\lambda_1, \dots, \lambda_\ell \ge 0$ be real numbers. 
\begin{enumerate}[label=\textup{(\roman*)}]
\item $(X,\Delta, \ba_{1}^{\lambda_1} \cdots \ba_{\ell}^{\lambda_\ell})$ is said to be \textit{sharply $F$-pure} at $x$ if there exist an integer $e \ge 1$ and a nonzero element $d \in \ba_{1}^{\lceil \lambda_1(p^e-1)\rceil} \cdots \ba_\ell^{\lceil \lambda_{\ell}(p^e-1)\rceil}\sO_{X,x}$ such that the composite 
\[
  \sO_{X,x} \xrightarrow{\varphi^{(e)}_{\Delta,x}} F^e_*\sO_X(\lceil (p^e-1)\Delta \rceil)_x \xrightarrow{\times F^e_*d} F^e_*\sO_X(\lceil (p^e-1)\Delta \rceil)_x
\] 
of the $\sO_{X,x}$-linear map $\varphi^{(e)}_{\Delta,x}$ induced by $\varphi^{(e)}_{\Delta}$ and the multiplication map by $F^e_*d$ splits as an $\sO_{X,x}$-module homomorphism. 

\item $(X,\Delta, \ba_{1}^{\lambda_1} \cdots \ba_{\ell}^{\lambda_\ell})$ is said to be \textit{strongly $F$-regular} at $x$ if for every nonzero element $c \in \sO_{X,x}$, there exist $e \ge 1$ and $0 \ne d \in \ba_1^{\lceil \lambda_1(p^e-1)\rceil} \cdots \ba_\ell^{\lceil \lambda_{\ell}(p^e-1)\rceil}\sO_{X,x}$ such that the composite 
\[
  \sO_{X,x} \xrightarrow{\varphi^{(e)}_{\Delta,x}} F^e_*\sO_X(\lceil (p^e-1)\Delta \rceil)_x \xrightarrow{\times F^e_*(cd)} F^e_*\sO_X(\lceil (p^e-1)\Delta \rceil)_x
\] 
of the $\sO_{X,x}$-linear map $\varphi^{(e)}_{\Delta,x}$ induced by $\varphi^{(e)}_{\Delta}$ and the multiplication map by $F^e_*(cd)$ splits as an $\sO_{X,x}$-module homomorphism. 
\end{enumerate}

We say that $(X,\Delta, \ba_{1}^{\lambda_1} \cdots \ba_{\ell}^{\lambda_\ell})$ is sharply $F$-pure (resp. strongly $F$-regular) if it is sharply $F$-pure (resp. strongly $F$-regular) at all points of $X$.  
\end{defn}

\begin{rem}\label{sharp F-pure remark}
It is known by \cite[Lemma 2.8]{Sch1} that if $(X,\Delta, \ba_{1}^{\lambda_1} \cdots \ba_{\ell}^{\lambda_\ell})$ is sharply $F$-pure at $x$, then there exist infinitely many integer $e \ge 1$ satisfying the condition in Definition \ref{F-sing def} (i). 
\end{rem}

\begin{rem}[$\textup{\cite{FW}, \cite[Theorem 3.1]{Smi}}$]\label{SFR to F-rat}
We have the following hierarchy of properties of $F$-finite normal singularities (see \cite{FW} for the definition of $F$-rational singularities): 
\[
\textup{strongly $F$-regular} \Longrightarrow \textup{$F$-rational} \Longrightarrow \textup{pseudo-rational}.  
\]
\end{rem}

\begin{defn}[\textup{\cite[Definition-Proposition 3.3]{BSTZ}, cf.~\cite{HY}, \cite{Tak0}}]\label{test ideal def}
Let $(R, \m)$ be an $F$-finite normal local ring of characteristic $p>0$ and $\Delta$ be an effective $\Q$-Weil divisor on $X:=\Spec R$. 
Let $\ba_1, \dots, \ba_\ell \subseteq R$ be nonzero ideals and $\lambda_1, \dots, \lambda_\ell \ge 0$ be real numbers. 
Fix a big sharp test element $d \in R$ for $(X, \Delta, \ba_{1}^{\lambda_1} \cdots \ba_{\ell}^{\lambda_\ell})$ (see \cite[Definition 2.16]{Sch1} for the definition of big sharp test elements). 
Then 
the \textit{test ideal} $\tau(X, \Delta, \ba_{1}^{\lambda_1} \cdots \ba_{\ell}^{\lambda_\ell})$\footnote{This ideal is often refereed as the ``big test ideal" for $(X, \Delta, \ba_{1}^{\lambda_1} \cdots \ba_{\ell}^{\lambda_\ell})$ and denoted by $\widetilde{\tau}(X, \Delta, \ba_{1}^{\lambda_1} \cdots \ba_{\ell}^{\lambda_\ell})$ or $\tau_b(X, \Delta, \ba_{1}^{\lambda_1} \cdots \ba_{\ell}^{\lambda_\ell})$  in the literature.}  for the triple $(X, \Delta, \ba_{1}^{\lambda_1} \cdots \ba_{\ell}^{\lambda_\ell})$ is defined by 
\[
  \tau(X, \Delta, \ba_{1}^{\lambda_1} \cdots \ba_{\ell}^{\lambda_\ell})=\sum_{e \ge 0} \sum_{\psi} \psi(F^e_*(d \ba_1^{\lceil \lambda_1(p^e-1) \rceil} \cdots \ba_\ell^{\lceil \lambda_\ell(p^e-1)\rceil})),  
\]
where $e$ runs through all nonnegative integers and $\psi$ runs through all elements of $\Hom_R(F^e_*R(\lceil (p^e-1)\Delta \rceil), R)$. 
\end{defn}

\begin{rem}\label{test element remark}
We do not give the definition of big sharp test elements in this paper, but such elements always exists by \cite[Lemma 2.17]{Sch1}. 
It follows from an argument analogous to the proof of the equivalence of (4) and (5) in \cite[Definition-Proposition 3.3]{BSTZ} that 
if $d'$ is a big sharp test element for $(X, \Delta, \ba_{1}^{\lambda_1} \cdots \ba_{\ell -1}^{\lambda_{\ell-1}})$, then 
\[
  \tau(X, \Delta, \ba_{1}^{\lambda_1} \cdots \ba_{\ell}^{\lambda_\ell})=\sum_{e \ge 0} \sum_{\psi} \psi(F^e_*(d' \ba_1^{\lceil \lambda_1(p^e-1) \rceil} \cdots \ba_{\ell-1}^{\lceil \lambda_{\ell-1}(p^e-1)\rceil}\ba_{\ell}^{\lceil \lambda_{\ell}p^e \rceil})),  
\]
where $\psi$ runs through all elements of $\Hom_R(F^e_*R(\lceil (p^e-1)\Delta \rceil), R)$. 
Similarly, if $c$ is a big sharp test element for $(X,\Delta)$, then 
\[
  \tau(X, \Delta, \ba_{1}^{\lambda_1} \cdots \ba_{\ell}^{\lambda_\ell})=\sum_{e \ge 0} \sum_{\psi} \psi(F^e_*(c \ba_1^{\lceil \lambda_1 p^e \rceil} \cdots \ba_{\ell}^{\lceil \lambda_{\ell}p^e \rceil})),  
\]
where $\psi$ runs through all elements of $\Hom_R(F^e_*R(\lceil (p^e-1)\Delta \rceil), R)$. 
\end{rem}

Test ideals for triples can be described as a sum of test ideals for pairs.  
\begin{lem}\label{test module for ideals}
Let the notation be the same as in Definition \ref{test ideal def}. 
Then 
\[
\tau(X, \Delta, \ba_1^{\lambda_1} \cdots \ba_\ell^{\lambda_\ell}) = \sum_{m_1, \dots, m_\ell \ge 1} \sum_{f_i \in \ba_i^{\lceil m_i \lambda_i \rceil}} \tau(X, \Delta + \frac{1}{m_1} \Div_X(f_1) + \cdots + \frac{1}{m_\ell} \Div_X(f_\ell)),
\]
where the first summation is taken over all integers $m_1, \dots, m_\ell \ge 1$ and the second summation is taken over all nonzero elements $f_i \in \ba_i^{\lceil m_i \lambda_i \rceil}$ for each $i=1, \dots, \ell$.
\end{lem}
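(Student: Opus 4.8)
The plan is to prove the two inclusions separately, using the characterization of the test ideal in terms of a big sharp test element together with Remark \ref{test element remark}, which lets us move the ceilings from $\lceil \lambda_i(p^e-1)\rceil$ to $\lceil \lambda_i p^e\rceil$ and, more importantly, lets us swap out the test element. First I would recall that for a single nonzero $f \in R$ and rational (or real) $c$, one has $\tau(X,\Delta + c\,\Div_X(f)) = \sum_{e\ge 0}\sum_{\psi}\psi\!\left(F^e_*\!\left(d\, f^{\lceil c p^e\rceil}\right)\right)$ for a suitable big sharp test element $d$, where $\psi$ runs over $\Hom_R(F^e_*R(\lceil (p^e-1)\Delta\rceil),R)$ — note $\lceil (p^e-1)(\Delta + c\Div_X f)\rceil = \lceil(p^e-1)\Delta\rceil + (p^e-1)c\,\Div_X f$ up to the ceiling on the $f$-part, and after twisting by $\sO_X(\lceil(p^e-1)c\,\Div_X f\rceil)$ the Hom set is identified with maps $F^e_*R(\lceil(p^e-1)\Delta\rceil) \to R$ precomposed with multiplication by $F^e_* f^{\lceil c p^e\rceil}$ (again using Remark \ref{test element remark} to harmonize the exponent). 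For multiple divisors one iterates: $\tau(X,\Delta + \sum \frac{1}{m_i}\Div_X(f_i)) = \sum_{e\ge0}\sum_\psi \psi\!\left(F^e_*\!\left(d\, f_1^{\lceil p^e/m_1\rceil}\cdots f_\ell^{\lceil p^e/m_\ell\rceil}\right)\right)$.

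For the inclusion $\supseteq$: fix $m_1,\dots,m_\ell\ge 1$ and $f_i\in \ba_i^{\lceil m_i\lambda_i\rceil}$. Then $f_i^{\lceil p^e/m_i\rceil} \in \ba_i^{\lceil m_i\lambda_i\rceil\lceil p^e/m_i\rceil} \subseteq \ba_i^{\lceil \lambda_i p^e\rceil}$, since $\lceil m_i\lambda_i\rceil\cdot\lceil p^e/m_i\rceil \ge (m_i\lambda_i)(p^e/m_i) = \lambda_i p^e$. Hence each generator $\psi(F^e_*(d\, f_1^{\lceil p^e/m_1\rceil}\cdots f_\ell^{\lceil p^e/m_\ell\rceil}))$ of the right-hand side lies in $\sum_{e,\psi}\psi(F^e_*(d\,\ba_1^{\lceil\lambda_1 p^e\rceil}\cdots\ba_\ell^{\lceil\lambda_\ell p^e\rceil}))$, which by the last formula in Remark \ref{test element remark} (applied successively, peeling off one ideal at a time, or in the version with $c$ a big sharp test element for $(X,\Delta)$) equals $\tau(X,\Delta,\ba_1^{\lambda_1}\cdots\ba_\ell^{\lambda_\ell})$. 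This gives $\supseteq$.

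For the inclusion $\subseteq$: by Remark \ref{test element remark}, $\tau(X,\Delta,\ba_1^{\lambda_1}\cdots\ba_\ell^{\lambda_\ell})$ is generated by elements $\psi(F^e_*(d\, g_1\cdots g_\ell))$ with $g_i\in\ba_i^{\lceil\lambda_i p^e\rceil}$. The point is that for each such $e$ and each such tuple, choosing $m_i := p^e$ gives $\lceil m_i\lambda_i\rceil = \lceil\lambda_i p^e\rceil$, so $g_i \in \ba_i^{\lceil m_i\lambda_i\rceil}$ is a legitimate choice of $f_i$ in the right-hand sum, and then $\lceil p^e/m_i\rceil = 1$, so $f_i^{\lceil p^e/m_i\rceil} = g_i$; thus $\psi(F^e_*(d\,g_1\cdots g_\ell))$ is one of the generators of $\tau(X,\Delta+\sum\frac{1}{m_i}\Div_X(f_i))$ for this choice of $m_i$ and $f_i$. (Here I must check that the running $e'$ in the definition of $\tau(X,\Delta+\sum\frac1{m_i}\Div_X f_i)$ includes $e'=e$, which it does since $e$ was arbitrary; the single-exponent term $e'=e$ already produces $\psi(F^e_*(d\, f_1\cdots f_\ell))$.) This yields $\subseteq$ and completes the proof.

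The main obstacle will be the bookkeeping in the opening paragraph: carefully justifying the formula $\tau(X,\Delta+\sum\frac1{m_i}\Div_X f_i) = \sum_{e,\psi}\psi(F^e_*(d\, f_1^{\lceil p^e/m_1\rceil}\cdots f_\ell^{\lceil p^e/m_\ell\rceil}))$ with the \emph{same} Hom set $\Hom_R(F^e_*R(\lceil(p^e-1)\Delta\rceil),R)$ appearing on both sides — i.e. absorbing the extra divisorial part $\lceil(p^e-1)\frac1{m_i}\Div_X f_i\rceil$ into a principal twist and identifying it with multiplication by a power of $f_i$, while keeping track of the discrepancy between $\lceil(p^e-1)/m_i\rceil$ and $\lceil p^e/m_i\rceil$ via Remark \ref{test element remark}. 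Once that normalization is in place, both inclusions are the elementary exponent comparisons above.
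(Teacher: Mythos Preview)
Your proposal is correct and follows essentially the same route as the paper. For $\subseteq$ you do exactly what the paper does: take $c$ a big sharp test element for $(X,\Delta)$, expand via Remark \ref{test element remark}, and observe that each generator $\psi(F^e_*(c\,g_1\cdots g_\ell))$ with $g_i\in\ba_i^{\lceil\lambda_i p^e\rceil}$ already appears in $\tau(X,\Delta+\sum\tfrac{1}{p^e}\Div_X g_i)$ (i.e.\ your choice $m_i=p^e$). For $\supseteq$ the paper is slightly more streamlined: rather than unpacking generators, it invokes the standard identity $\tau(X,\Delta+\tfrac{1}{m}\Div_X f)=\tau(X,\Delta,(f)^{1/m})$ together with monotonicity of test ideals in the ideal and the exponent, obtaining
\[
\tau(X,\Delta,(f_1)^{1/m_1}\cdots(f_\ell)^{1/m_\ell})\subseteq\tau(X,\Delta,\ba_1^{\lceil m_1\lambda_1\rceil/m_1}\cdots\ba_\ell^{\lceil m_\ell\lambda_\ell\rceil/m_\ell})\subseteq\tau(X,\Delta,\ba_1^{\lambda_1}\cdots\ba_\ell^{\lambda_\ell}).
\]
Your ``main obstacle'' (absorbing the divisorial twist into a principal multiplication and matching exponents) is precisely the content of that identity combined with Remark \ref{test element remark}, so once you recognize it as such the bookkeeping disappears.
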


\begin{proof}
$\tau'(X, \Delta, \ba_1^{\lambda_1} \cdots \ba_\ell^{\lambda_\ell})$ denotes the ideal on the right hand side. 
We first show that $\tau'(X, \Delta, \ba_1^{\lambda_1} \cdots \ba_\ell^{\lambda_\ell})$ in contained in $\tau(X, \Delta, \ba_1^{\lambda_1} \cdots \ba_\ell^{\lambda_\ell})$. 
When $m_i$ is a positive integer and $f_i \in \ba_i^{\lceil m_i \lambda_i \rceil}$ for each $i=1, \dots, \ell$, 
\begin{align*}
\tau(X, \Delta+\frac{1}{m_1} \Div_X(f_1) + \cdots + \frac{1}{m_\ell} \Div_X(f_\ell))&=\tau(X,\Delta, (f_1)^{1/m_1}\cdots (f_{\ell})^{1/m_\ell})\\
&\subseteq \tau(X, \Delta, \ba_1^{\lceil m_1 \lambda_1 \rceil/m_1} \cdots \ba_{\ell}^{\lceil m_\ell \lambda_\ell \rceil/m_\ell})\\
& \subseteq \tau(X, \Delta, \ba_1^{\lambda_1} \cdots \ba_\ell^{\lambda_\ell}).
\end{align*}
Thus, $\tau'(X, \Delta, \ba_1^{\lambda_1} \cdots \ba_\ell^{\lambda_\ell}) \subseteq \tau(X, \Delta, \ba_1^{\lambda_1} \cdots \ba_\ell^{\lambda_\ell})$. 

We next prove the reverse inclusion. 
Let $c \in R$ be a big sharp test element for $(R, \Delta)$. 
Then by Remark \ref{test element remark},  
\begin{align*}
\tau(X, \Delta, \ba_1^{\lambda_1} \cdots \ba_\ell^{\lambda_\ell})
&=\sum_{e \ge 0}\sum_{\psi}\psi(F^e_*(c\ba_1^{\lceil \lambda_1 p^e \rceil} \cdots \ba_{\ell}^{\lceil \lambda_\ell p^e \rceil}))\\
&=\sum_{e \ge 0} \sum_{g_i \in \ba_i^{\lceil \lambda_i p^e \rceil}} \sum_{\psi} \psi(F^e_*(cg_1 \cdots g_\ell))\\
&\subseteq \sum_{e \ge 0} \sum_{g_i \in \ba_i^{\lceil \lambda_i p^e \rceil}} \tau(X, \Delta, g_1^{1/p^e} \cdots g_{\ell}^{1/p^e})\\
&=\sum_{e \ge 0} \sum_{g_i \in \ba_i^{\lceil \lambda_i p^e \rceil}} \tau(X, \Delta+\frac{1}{p^e}\Div_X(g_1)+\cdots+\frac{1}{p^e}\Div_X(g_\ell)), 
\end{align*}
where  
$\psi$ ranges over all elements of $\Hom_R(F^e_*R(\lceil (p^e-1)\Delta \rceil), R)$, and $g_i$ ranges over all nonzero elements of $\ba_i^{\lceil \lambda_i p^e \rceil}$ for each $i=1, \dots, \ell$. 
Therefore, $\tau(X, \Delta, \ba_1^{\lambda_1} \cdots \ba_\ell^{\lambda_\ell}) \subseteq \tau'(X, \Delta, \ba_1^{\lambda_1} \cdots \ba_\ell^{\lambda_\ell})$. 
\end{proof}

\subsection{Deformations}
In this subsection, we recall some basic terminology from the theory of deformations.

\begin{defn}
Let $X$ be an algebraic scheme over a field $k$.  Suppose that $T$ is a scheme and $t \in T$ is a $k$-rational point. 
A \textit{deformation} of $X$ over $T$ with reference point $t$ is a pair $(\mathcal{X}, i)$ of a scheme $\mathcal{X}$ which is flat and of finite type over $T$ and an isomorphism $i:X  \xrightarrow{\ \sim\ }  \mathcal{X} \times_T \Spec \kappa(t)$ of $k$-schemes. 
\end{defn}

In the subsequent sections, we consider several problems on deformations of singularities with the following setup.

\begin{setting}\label{local setting}
Suppose that $X$ is a normal integral scheme over a field $k$,  $T$ is a regular integral scheme with generic point $\eta$ and $t \in T$ is a $k$-rational point. 
Let $(\mathcal{X},i)$ be a deformation of $X$ over $T$ with reference point $t$ such that $\mathcal{X}$ is an excellent normal integral scheme with dualizing complex. 
Let $\mathcal{D}$ be an effective $\Q$-Weil divisor on $\mathcal{X}$ whose support does not contain the closed fiber $X$. 
Let $\ba \subseteq \sO_{\mathcal{X}}$ be a coherent ideal sheaf such that $\ba \sO_X$ is nonzero and $\lambda > 0$ be a real number.
\end{setting}

\begin{rem}\label{Q-Gorenstein}
We use the notation in Setting \ref{local setting}. 
\begin{enumerate}[label=(\roman*)]
\item In Section 3, we mainly focus on the case where the following condition holds.
\begin{enumerate}
\item[(A)] the pair $(\mathcal{X}, \mathcal{D})$ on the total space $\mathcal{X}$ is log $\Q$-Gorenstein, that is, $K_{\mathcal{X}}+\mathcal{D}$ is $\Q$-Cartier.  
\end{enumerate}
The condition (A) implies the following two conditions:
\begin{enumerate}
\item[(B)] the pair $(\mathcal{X}_\eta, \mathcal{D}_\eta)$ on the generic fiber $\mathcal{X}_\eta$ is log $\Q$-Gorenstein, 
\item[(C)] the pair $(X, \mathcal{D}|_X)$ on the closed fiber $X$ is log $\Q$-Gorenstein.
\end{enumerate}
We note that there is no relation between (B) and (C) (see \cite[Example 9.1.8]{Is} or Example \ref{counter eg local} below for a counterexample to the implication (C) $\Rightarrow$ (B)).
On the other hand, in Sections 4 and 5, we discuss the case where the conditions (B) and (C) are satisfied though the condition (A) is not necessarily satisfied.

\item 
If $k$ is an uncountable algebraically closed field, $T$ is of finite type over $k$, and a general closed fiber of $\mathcal{X} \to T$ is normal, then the condition (B) is equivalent to the following condition:
\begin{enumerate}
\item[(D)] for a general closed point $s \in T$, the pair $(\mathcal{X}_s, \mathcal{D}_s)$ on the fiber $\mathcal{X}_s$ is log $\Q$-Gorenstein.
\end{enumerate}
Indeed, it is obvious that (B) implies (D). 
For the converse implication, we fix an integer $m>0$ such that $m \mathcal{D}$ is an integral Weil divisor. 
For every integer $n>0$, we consider the coherent sheaf $\mathcal{F}_n: =\sO_{\mathcal{X}}(n m( K_{\mathcal{X}}+\mathcal{D}))$.
Since $\mathcal{F}_n$ satisfies the $(S_2)$-condition, 
there exists an non-empty open subset $V_n \subseteq T$ such that for every point $s \in V_n$, the restriction $\mathcal{F}_n|_{\mathcal{X}_s}$ to the fiber $\mathcal{X}_s$ satisfies the $(S_2)$-condition. 
Therefore, for such $s$, we have 
\[
\mathcal{F}_n|_{\mathcal{X}_s} \cong \sO_{\mathcal{X}_s} (nm(K_{\mathcal{X}_s}+\mathcal{D}_s)),
\] 
and in particular, if $nm (K_{\mathcal{X}_s} + \mathcal{D}_s)$ is Cartier, then so is $nm (K_{\mathcal{X}} + \mathcal{D})$ along $\mathcal{X}_s$.

Since $k$ is uncountable, we find a closed point $s \in \bigcap_n V_n$ such that $(\mathcal{X}_s, \mathcal{D}_s)$ is log $\Q$-Gorenstein.
Now pick an integer $l>0$ such that $l m(K_{\mathcal{X}_s} +\mathcal{D}_s)$ is Cartier.
Taking into account that $s \in V_l$, we conclude that $l m(K_{\mathcal{X}} + \mathcal{D}) $ is Cartier along $\mathcal{X}_s$, which implies the condition (B). 

\item If $k$ is perfect and $\mathcal{X}$ is proper over $T$, then it follows from \cite[(12.2.4)]{EGA} that a general fiber of $\mathcal{X} \to T$ is geometrically normal, and in particular, the third assumption in (ii) is satisfied.

Similarly, if $k$ is perfect, then it follows from \cite[(12.1.6)]{EGA} that after shrinking $\mathcal{X}$ around $X$, we may assume that $\mathcal{X} \to T$ is a normal morphism, that is, all fibers are geometrically normal.
\end{enumerate}
\end{rem}

\section{Deformations with \texorpdfstring{$\Q$}{Q}-Gorenstein total space}

In this section, we study arithmetic deformations of $F$-pure singularities when the total space is $\Q$-Gorenstein. 

\subsection{BCM test ideals}
First we recall the definition of BCM test ideals for pairs introduced by Ma-Schwede \cite{MS}. 
\begin{defn}[\textup{\cite[Definition 6.2, Definition 6.9]{MS}}]
Let $(R,\m)$ be a $d$-dimensional complete normal local ring of mixed characteristic $(0,p)$ and fix an effective canonical divisor $K_X$ of $X:=\Spec R$. 
Let $\Delta$ be an effective $\Q$-Weil divisor on $X$ such that $K_X+\Delta$ is $\Q$-Cartier. 
Then there exist an integer $n \ge 1$ and nonzero element $f \in R$ such that $n(K_X+\Delta)=\Div_X(f)$. 
We set 
\[
  (0)^{\mathcal{B}, K_X+\Delta}_{H^d_\m(R)}:=\bigcup_{B} \ker \left( H^d_\m(R) \xrightarrow{\times f^{1/n}} H^d_\m(B)\right),
\]
where $B$ runs through all integral perfectoid big Cohen-Macaulay $R^+$-algebras. 
Then \textit{BCM test ideal} for $(X, \Delta)$ is defined as
\[
  \tau_{\mathcal{B}}(X, \Delta):=\mathrm{Ann}_{\omega_R}\, (0)^{\mathcal{B}, K_X+\Delta}_{H^d_\m(R)} \subseteq R.
\]
\end{defn}

We generalize the definition and some properties of BCM test ideals to the case of triples. 
\begin{defn}\label{BCM test ideal}
Let $R$ be a complete normal local ring of mixed characteristic $(0,p)$ and $\Delta$ be an effective $\Q$-Weil divisor on $X:=\Spec R$ such that $K_X+\Delta$ is $\Q$-Cartier. 
Let $\ba_1, \dots, \ba_l \subseteq R$ be nonzero ideals and $\lambda_1, \dots, \lambda_\ell \ge 0$ be real numbers. 
\begin{enumerate}[label=\textup{(\roman*)}]
\item We define the BCM-test ideal for the triple $(R, \Delta, \ba_1^{\lambda_1} \cdots \ba_\ell^{\lambda_\ell})$ as 
\[
\tau_{\mathcal{B}}(X, \Delta, \ba_1^{\lambda_1} \cdots \ba_\ell^{\lambda_\ell}): = \sum_{m_1, \dots, m_\ell} \sum_{f_i \in \ba^{\lceil m_i \lambda_i \rceil}} \tau_{\mathcal{B}}(X, \Delta + \frac{1}{m_1} \Div_X(f_1) + \cdots + \frac{1}{m_\ell} \Div_X(f_\ell)),
\]
where the first summation is taken over all positive integers $m_1, \dotsm m_\ell$ and the second summation is taken over all nonzero elements $f_i \in \ba^{\lceil m_i \lambda_i \rceil}$ for each $i=1, \dots, \ell$. 

\item We say that $(X, \Delta, \ba_1^{\lambda_1} \cdots \ba_{\ell}^{\lambda_\ell})$ is \emph{BCM-regular} if we have \[
\tau_{\mathcal{B}}(X, \Delta, \ba_1^{\lambda_1} \cdots \ba_\ell^{\lambda_\ell}) = R.
\]
\end{enumerate}
\end{defn}

\begin{lem}\label{tauB and birat}
Let the notation be the same as in Definition \ref{BCM test ideal}. 
Then we have 
\[
\tau_{\mathcal{B}}(X, \Delta, \ba_1^{\lambda_1} \cdots \ba_\ell^{\lambda_\ell}) \subseteq \mathcal{J}(X, \Delta, \ba_1^{\lambda_1} \cdots \ba_\ell^{\lambda_l}).
\]
\end{lem}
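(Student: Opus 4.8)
The plan is to reduce the triple case to the pair case already established (or essentially established) by Ma--Schwede, using the defining formula for $\tau_{\mathcal{B}}$ of a triple as a sum of $\tau_{\mathcal{B}}$'s of pairs, together with the analogous decomposition for the multiplier ideal. First I would record the corresponding identity for multiplier ideals: by the same elementary argument as in Lemma \ref{test module for ideals} (choosing a common log resolution and comparing round-downs), one has
\[
\mathcal{J}(X, \Delta, \ba_1^{\lambda_1} \cdots \ba_\ell^{\lambda_\ell}) = \sum_{m_1, \dots, m_\ell \ge 1} \sum_{f_i \in \ba_i^{\lceil m_i \lambda_i \rceil}} \mathcal{J}\Bigl(X, \Delta + \tfrac{1}{m_1} \Div_X(f_1) + \cdots + \tfrac{1}{m_\ell} \Div_X(f_\ell)\Bigr),
\]
or at least the inclusion ``$\supseteq$'' of the left side by each summand on the right, which is all that is needed here. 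Since the definition of $\tau_{\mathcal{B}}$ of the triple in Definition \ref{BCM test ideal} is exactly the sum over the same index set of $\tau_{\mathcal{B}}$ of the pairs $(X, \Delta + \sum \tfrac{1}{m_i}\Div_X(f_i))$, it suffices to prove the pair-level containment $\tau_{\mathcal{B}}(X, \Delta') \subseteq \mathcal{J}(X, \Delta')$ for each effective $\Q$-Weil divisor $\Delta'$ with $K_X + \Delta'$ $\Q$-Cartier, and then sum over all $(m_i)$ and all $(f_i)$.

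For the pair-level containment I would argue as follows. Write $n(K_X+\Delta') = \Div_X(f)$ for suitable $n \ge 1$ and $0 \ne f \in R$. By definition $\tau_{\mathcal{B}}(X,\Delta') = \operatorname{Ann}_{\omega_R}(0)^{\mathcal{B},K_X+\Delta'}_{H^d_\m(R)}$, where the submodule is the union of the kernels of $H^d_\m(R) \xrightarrow{\times f^{1/n}} H^d_\m(B)$ over integral perfectoid big Cohen--Macaulay $R^+$-algebras $B$. The strategy is to take a proper birational $\pi \colon Y \to X$ from a normal $Y$ with $\ba_i\sO_Y$ invertible (so that $\mathcal{J}$ is computed), push the big Cohen--Macaulay algebra structure through $\pi$, and use the local duality pairing between $\omega_R$ and $H^d_\m(R)$ together with the trace/evaluation map $R\pi_*\omega_Y \to \omega_X$ to see that any element of $\operatorname{Ann}_{\omega_R}(0)^{\mathcal{B}}_{H^d_\m}$ lies in $\pi_*\sO_Y(-\lfloor \Delta'_Y\rfloor)$ when pulled into the appropriate reflexive sheaf. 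Concretely, this is the mixed-characteristic analog of the classical fact $\tau(X,\Delta') \subseteq \mathcal{J}(X,\Delta')$ and its proof via the surjection from a big Cohen--Macaulay algebra onto local cohomology; I expect this to be available either directly from \cite{MS} or from a short argument combining their results with the behavior of $H^d_\m$ under the birational map and the fact that any $B$ as above maps compatibly to one built over $Y$.

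The main obstacle I anticipate is the pair-level statement, specifically matching the ``test'' submodule $(0)^{\mathcal{B},K_X+\Delta'}_{H^d_\m(R)}$ against the round-down $\lfloor \Delta'_Y + \text{(ideal part)}\rfloor$ appearing in $\mathcal{J}$ after passing to the birational model: one must check that the $\Q$-divisor bookkeeping (the choice of $f^{1/n}$, compatibility of canonical divisors under $\pi$, and the ceiling/floor discrepancies) is consistent with the big Cohen--Macaulay algebra maps, and that enlarging $B$ to absorb the birational model does not change the annihilator. Everything else — the reduction from triples to pairs and the decomposition of $\mathcal{J}$ — is a routine summation argument parallel to Lemma \ref{test module for ideals}. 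I would structure the write-up so that the pair case is isolated as the only place where perfectoid/big Cohen--Macaulay input is invoked, and the triple case follows formally.
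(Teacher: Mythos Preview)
Your approach is correct and essentially the same as the paper's: both use the defining sum in Definition \ref{BCM test ideal} to reduce to the pair case, and then invoke the pair-level containment already proved by Ma--Schwede. The ``main obstacle'' you anticipate is not one: the inclusion $\tau_{\mathcal{B}}(X,\Delta') \subseteq \pi_*\sO_Y(\lceil K_Y - \pi^*(K_X+\Delta')\rceil)$ for any proper birational $\pi$ from a normal $Y$ is exactly \cite[Theorem 6.21]{MS}, so no new big Cohen--Macaulay argument is required. The paper simply cites that theorem for each summand and then uses the elementary inequality $\tfrac{1}{m_i}\pi^*\Div_X(f_i) \ge \lambda_i F_i$ (from $f_i \in \ba_i^{\lceil m_i\lambda_i\rceil}$ and $\ba_i\sO_Y = \sO_Y(-F_i)$) to land directly in the $\pi$-term of the intersection defining $\mathcal{J}(X,\Delta,\ba_1^{\lambda_1}\cdots\ba_\ell^{\lambda_\ell})$, rather than passing through $\mathcal{J}(X,\Delta')$ as an intermediate step; but this is a cosmetic difference. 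One small caution: your parenthetical about ``choosing a common log resolution'' for the full equality of the $\mathcal{J}$-decomposition is not available in mixed characteristic, but as you note, only the inclusion of each summand into $\mathcal{J}$ of the triple is needed, and that holds by the divisor inequality above without any resolution.
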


\begin{proof}
Let $\pi:Y \to X$ be a proper birational morphism from a normal integral scheme $Y$ such that $\ba_i \sO_Y=\sO_Y(-F_i)$ is invertible for every $i=1, \dots, \ell$. 
When $m_i$ is a positive integer and $f_i \in \ba_i^{\lceil m_i \lambda_i \rceil}$ for each $i=1, \dots, \ell$, it follows from \cite[Theorem 6.21]{MS} that 
\begin{align*}
&\tau_{\mathcal{B}}(X, \Delta + \frac{1}{m_1} \Div_X(f_1) + \cdots + \frac{1}{m_\ell} \Div_X(f_\ell))\\
\subseteq &\pi_*\sO_Y(\lceil K_Y-\pi^*(K_X+\Delta+\frac{1}{m_1} \Div_X(f_1) + \cdots + \frac{1}{m_\ell} \Div_X(f_\ell))\rceil)\\
\subseteq & \pi_*\sO_Y(\lceil K_Y-\pi^*(K_X+\Delta)-\lambda_1 F_1 - \cdots - \lambda_\ell F_\ell\rceil).
\end{align*}
Thus, $\tau_{\mathcal{B}}(X, \Delta, \ba_1^{\lambda_1} \cdots \ba_\ell^{\lambda_\ell}) \subseteq \mathcal{J}(X, \Delta, \ba_1^{\lambda_1} \cdots \ba_\ell^{\lambda_l})$. 
\end{proof}

\begin{lem}\label{restriction}
Let the notation be the same as in Definition \ref{BCM test ideal}. 
Let $h_1, \dots, h_r \in R$ be a regular sequence such that $S:=R/(h_1, \dots, h_r)$ is an $F$-finite normal local ring of characteristic $p$. 
In addition, We assume that $\ba_i$ is not contained in the ideal $(h_1, \dots, h_r)$ for all $i$ and $Z:=\Spec S$ is not contained in the support of $\Delta$. 
Then we have 
\[
\tau(Z,\Delta|_Z, (\ba_1 S)^{\lambda_1} \cdots (\ba_\ell S)^{\lambda_\ell}) \subseteq \tau_{\mathcal{B}}(X,\Delta, \ba_1^{\lambda_1} \cdots \ba_\ell^{\lambda_\ell}) S. 
\]
\end{lem}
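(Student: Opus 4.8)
The statement is a restriction (or "subadditivity-type") inequality comparing the classical test ideal of a complete intersection slice $Z = \Spec S$ with the BCM test ideal on $X = \Spec R$ restricted to $Z$. By Lemma \ref{test module for ideals} applied on $Z$, and by the very definition of $\tau_{\mathcal{B}}$ on $X$ (Definition \ref{BCM test ideal} (i)), both sides reduce to sums over integers $m_i \ge 1$ and elements $f_i$. So the plan is to reduce to the case $\ell = 0$, i.e. to prove the restriction inequality for pairs:
\[
\tau(Z, \Delta|_Z) \subseteq \tau_{\mathcal{B}}(X, \Delta) S
\]
whenever $h_1, \dots, h_r$ is a regular sequence with $S = R/(h_1, \dots, h_r)$ an $F$-finite normal local ring of characteristic $p$ and $Z$ not contained in $\Supp \Delta$; the triple case then follows by matching up the summands $\tau(Z, \Delta|_Z + \sum \frac{1}{m_i}\Div_Z(\bar f_i))$ with $\tau_{\mathcal{B}}(X, \Delta + \sum \frac{1}{m_i}\Div_X(f_i)) S$, using that $\Div_X(f_i)|_Z = \Div_Z(\bar f_i)$ (here one needs $\bar f_i = f_i S \ne 0$, which is where the hypothesis $\ba_i \not\subseteq (h_1, \dots, h_r)$ enters, since the $f_i$ may be chosen inside $\ba_i^{\lceil m_i\lambda_i\rceil}$, a nonzero ideal modulo $(h_1,\dots,h_r)$ — and one takes the liberty to restrict the sum on the $Z$-side to those $f_i$ not in $(h_1,\dots,h_r)$, which suffices since by prime avoidance such generators still generate enough of $\ba_i S$).

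**Key steps.** First, I would reduce to $r = 1$ by induction on the length of the regular sequence: if the statement holds for one hypersurface section, then writing $S = (R/(h_1))/(h_2, \dots, h_r)$ and checking that $R/(h_1)$ is again a complete normal local ring satisfying the hypotheses (normality of $R/(h_1)$ holds since $S$ is normal and $R/(h_1) \to S$ is a further quotient by a regular sequence — actually one should be slightly careful and instead argue that $R/(h_1, \dots, h_j)$ is normal for each $j$ by Serre's criterion, using that $S$ is normal and the $h_i$ form a regular sequence, so $R/(h_1,\dots,h_j)$ is $(S_2)$ and generically regular), we may iterate. Second, for the hypersurface case, the engine is the mixed-characteristic analog of the $F$-adjunction / Fedder-type restriction theorem for BCM test ideals. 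Concretely, I expect to invoke the result of Ma-Schwede (in \cite{MS}, e.g. their Theorem on restriction of $(0)^{\mathcal B}_{H^d_\m}$ under quotients by a regular element, or the perfectoid analog of \cite[Main results]{BSTZ}) stating that if $h \in R$ is a nonzerodivisor with $S = R/(h)$ normal, then
\[
\tau_{\mathcal{B}}(S, \Delta|_S) \subseteq \tau_{\mathcal{B}}(X, \Delta + (1-t)\Div_X(h)) \cdot S \quad \text{for } 0 \le t < 1,
\]
or more simply $\tau_{\mathcal{B}}(S, \Delta|_S) \subseteq (\tau_{\mathcal{B}}(X, \Delta) \cdot S)$ under the appropriate hypotheses, together with the comparison between the classical test ideal of $S$ and its BCM test ideal. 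Third, one needs that for an $F$-finite normal local ring $S$ of characteristic $p$ which arises this way, the BCM test ideal agrees with (or contains) the classical one: $\tau(S, \Delta|_S) \subseteq \tau_{\mathcal B}(S, \Delta|_S)$. This is again from \cite{MS} (their comparison of BCM and classical test ideals in equal characteristic $p$, which gives equality $\tau_{\mathcal B} = \tau$, or at least the inclusion $\tau \subseteq \tau_{\mathcal B}$ that we need). Chaining these gives $\tau(S, \Delta|_S) \subseteq \tau_{\mathcal B}(X, \Delta) S$, and then the triple version follows by the summand-matching described above.

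**Main obstacle.** The crux is the hypersurface restriction step: getting the clean inclusion $\tau_{\mathcal{B}}(S, \Delta|_S) \subseteq \tau_{\mathcal{B}}(X, \Delta) \cdot S$ out of the perfectoid machinery. The subtlety is that in mixed characteristic one does not have a canonical big Cohen-Macaulay algebra, so the comparison of $(0)^{\mathcal B}_{H^{d-1}_\m(S)}$ with the image of $(0)^{\mathcal B}_{H^d_\m(R)}$ under the connecting map in the long exact sequence for $0 \to R \xrightarrow{h} R \to S \to 0$ requires choosing compatible families of perfectoid BCM algebras for $R$ and $S$ — which is exactly what Ma-Schwede set up. I would cite their restriction theorem directly rather than reprove it. A secondary technical point I would need to handle carefully is the passage $\Div_X(f_i)|_Z = \Div_Z(f_i S)$: this requires that $Z \not\subseteq \Supp \Div_X(f_i)$, i.e. $f_i S \ne 0$, and that no component of $\Div_X(f_i)$ restricts badly — which is fine since $Z$ is normal, $f_i$ restricts to a nonzerodivisor on $S$ (as $S$ is a domain and $f_iS\ne0$), and divisor restriction to a normal Cartier divisor commutes with taking $\Div$. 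I would phrase the proof so that on the $Z$-side the sum in Lemma \ref{test module for ideals} is taken only over those $f_i \notin (h_1, \dots, h_r)$, noting this does not shrink the test ideal because $\ba_i S$ is still generated (up to radical/integral closure, which is all that matters for the test ideal sum) by such elements by prime avoidance applied to the minimal primes of $(h_1,\dots,h_r)$.
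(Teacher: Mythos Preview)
Your overall strategy---reduce to pairs via Lemma \ref{test module for ideals} and Definition \ref{BCM test ideal}, then invoke the Ma--Schwede restriction theorem---matches the paper's, but you miss the one genuine obstacle. The restriction result you need from \cite{MS} (their Theorem 6.27, which directly gives $\tau(Z,\Gamma|_Z) \subseteq \tau_{\mathcal B}(X,\Gamma)S$ for a regular sequence of arbitrary length) carries the hypothesis that the Cartier index of $K_X+\Gamma$ is \emph{not divisible by $p$}. You never check this, and after absorbing the $\frac{1}{m_i}\Div_X(f_i)$ terms into $\Gamma$ the index may very well be divisible by $p$. The paper handles this with a perturbation: pick an effective $D\sim K_X+\Delta$ not containing $Z$, set $\Delta'=\frac{n-1}{ns+1}D$ for $s\gg 0$ (so $\Delta'$ has small coefficients), and observe that $(ns+1)(K_X+\Delta+\Delta')\sim n(s+1)(K_X+\Delta)$ forces the index of $K_X+\Delta+\Delta'$ to divide $ns+1$, which is coprime to $p$. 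One then needs the stability result \cite[Proposition 2.14 (2)]{Sat} to see that the classical test ideal on $Z$ is unchanged by adding $\Delta'|_Z$, and monotonicity $\tau_{\mathcal B}(X,\Delta+\Delta',\dots)\subseteq\tau_{\mathcal B}(X,\Delta,\dots)$ on the other side. This is the missing idea.

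A secondary issue: your induction on $r$ is both unnecessary and broken. It is unnecessary because \cite[Theorem 6.27]{MS} already treats an arbitrary regular sequence. It is broken because the intermediate quotients $R/(h_1,\dots,h_j)$ need not be normal (normality of the bottom $S$ does not propagate upward---quotienting an $(S_2)$ ring by a nonzerodivisor only yields $(S_1)$ in general), and moreover they need not be of mixed characteristic $(0,p)$, so the inductive hypothesis does not apply to them. Drop the induction and cite the Ma--Schwede theorem for general $r$ directly; then the only real work left is the index perturbation above.
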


\begin{proof}
If the Cartier index of $K_X+\Delta$ is not divisible by $p$, then the assertion follows from a combination of Lemma \ref{test module for ideals} and \cite[Theorem 6.27]{MS}. 
Therefore, we assume that the Cartier index $n$ of $K_X+\Delta$ is divisible by $p$. 
Choose an effective $\Q$-Weil divisor $D$ on $X$ that is linearly equivalent to $K_X+\Delta$ and does not contain $Z$ in its support. 
By \cite[Proposition 2.14 (2)]{Sat}, we can take a sufficiently large integer $s$ so that 
\[
\tau(Z,\Delta|_Z, (\ba_1 S)^{\lambda_1} \cdots (\ba_\ell S)^{\lambda_\ell})=\tau(Z,(\Delta+\Delta')|_Z, (\ba_1 S)^{\lambda_1} \cdots (\ba_\ell S)^{\lambda_\ell}),
\] 
where $\Delta':=\frac{n-1}{ns+1}D$.  
Since $(ns+1)(K_X+\Delta+\Delta') \sim n(s+1)(K_X+\Delta)$, the Cartier index of $K_X+\Delta+\Delta'$ is not divisible by $p$. 
It then follows from Lemma \ref{test module for ideals} and \cite[Theorem 6.27, Lemma 6.11]{MS} that 
\begin{align*}
\tau(Z,\Delta|_Z, (\ba_1 S)^{\lambda_1} \cdots (\ba_\ell S)^{\lambda_\ell})&=\tau(Z,(\Delta+\Delta')|_Z, (\ba_1 S)^{\lambda_1} \cdots (\ba_\ell S)^{\lambda_\ell})\\
&\subseteq \tau_{\mathcal{B}}(X,\Delta+\Delta', \ba_1^{\lambda_1} \cdots \ba_\ell^{\lambda_\ell})S\\
&\subseteq \tau_{\mathcal{B}}(X,\Delta, \ba_1^{\lambda_1} \cdots \ba_\ell^{\lambda_\ell})S. 
\end{align*}
\end{proof}

\subsection{Deformations of \texorpdfstring{$F$}{F}-pure singularities}

We start with two auxiliary lemmas on log canonical and $F$-pure singularities. 

\begin{lem}\label{non-lc}
Let $(R,\m)$ be a complete normal local ring 
and $\Delta$ be an effective $\Q$-Weil divisor on $X:=\Spec R$ such that $K_X+\Delta$ is $\Q$-Cartier. 
Let $\ba \subseteq R$ be a nonzero ideal and $\lambda > 0$ be a real number.
If $(X,\Delta, \ba^\lambda)$ is not log canonical, then there exist a descending chain of nonzero ideals of $R$
\[
R=\bb_0 \supseteq \bb_1 \supseteq \dots \supseteq \bb_n \supseteq \cdots  
\]
and a decreasing sequence of positive real numbers 
\[
1=\varepsilon_0 \ge \varepsilon_1 \ge \dots \ge \varepsilon_n \ge \dots  
\]
with following properties:  
\begin{enumerate}[label=\textup{(\roman*)}]
\item for every integer $n \ge 0$, we have
\[
\mathcal{J}(X, \Delta, \ba^\lambda \bb_n^{1-\varepsilon_n}) \subseteq \bb_{n+1},
\]
\item for every integer $\ell \ge 1$, there exists an integer $n(\ell) \ge 0$ such that $\bb_{n(\ell)} \subseteq \m^\ell$.
\end{enumerate}
\end{lem}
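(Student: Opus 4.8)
The plan is to build the sequences $\bb_n$ and $\varepsilon_n$ inductively, using the failure of log canonicity to keep the multiplier ideals dropping at each stage, and to force the containment in $\m^\ell$ by a valuation-theoretic argument. I would set $\bb_0=R$, $\varepsilon_0=1$, and then, given $\bb_n$, define $\bb_{n+1}:=\mathcal{J}(X,\Delta,\ba^\lambda\bb_n^{1-\varepsilon_n})$ for a suitable $\varepsilon_n\in(0,\varepsilon_{n-1}]$ to be chosen. So property (i) holds by construction (with equality), and the only real work is (a) arranging that the $\bb_n$ are nonzero and form a descending chain, and (b) arranging property (ii). For (a): since $\bb_n^{1-\varepsilon_n}\subseteq\bb_n$ in the sense of the coefficient, monotonicity of multiplier ideals gives $\mathcal{J}(X,\Delta,\ba^\lambda\bb_n^{1-\varepsilon_n})\subseteq\mathcal{J}(X,\Delta,\ba^\lambda)\subseteq\bb_n$ once we know $\bb_0=R\supseteq\mathcal{J}$, and more carefully $\mathcal{J}(X,\Delta,\ba^\lambda\bb_n^{1-\varepsilon_n})\subseteq\mathcal{J}(X,\Delta,\ba^\lambda\bb_{n-1}^{1-\varepsilon_{n-1}})=\bb_n$ provided $\bb_n^{1-\varepsilon_n}$ dominates $\bb_{n-1}^{1-\varepsilon_{n-1}}$, i.e. $\bb_n\subseteq\bb_{n-1}$ with comparable exponents; since $\varepsilon_n\le\varepsilon_{n-1}$ and $\bb_n\subseteq\bb_{n-1}$ this is fine. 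Nonzeroness is automatic because a multiplier ideal of a nonzero triple over a normal domain is a nonzero (in fact $\m$-primary or larger) ideal.

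The heart of the matter is property (ii), and for that I would use the non-log-canonicity hypothesis together with a single divisor witnessing it. Choose a proper birational $\pi:Y\to X$ from a normal $Y$ with $\ba\sO_Y=\sO_Y(-F)$ invertible and a prime divisor $E$ on $Y$ with $a:=a_E(X,\Delta,\ba^\lambda)<-1$, i.e. $\ord_E(\Delta_Y+\lambda F)>1$. The point is to track the order of vanishing along $E$ of the ideals $\bb_n$: writing $v_n:=\ord_E(\bb_n\sO_Y)$ (the minimum of $\ord_E$ over $\bb_n\sO_Y$, taking $v_0=0$), I want to show $v_n\to\infty$, because a descending chain of ideals whose order along a fixed divisor $E$ centered at $\m$ tends to infinity is eventually contained in any given $\m^\ell$ — here one uses that $E$ lies over the closed point, so $\ord_E$ is bounded below by a positive multiple of the $\m$-adic order, hence $v_n\to\infty$ forces $\bb_n\subseteq\m^\ell$ for $n$ large. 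To see $v_n\to\infty$, compute: by the formula for multiplier ideals via $\pi$ (after passing to a common log resolution of $(X,\Delta,\ba,\bb_n)$ that dominates $Y$), $\ord_E\mathcal{J}(X,\Delta,\ba^\lambda\bb_n^{1-\varepsilon_n})$ is controlled by $\lceil\ord_E(\Delta_Y+\lambda F)+(1-\varepsilon_n)v_n\rceil$, and since $\ord_E(\Delta_Y+\lambda F)=-a>1$, we get roughly $v_{n+1}\ge\lfloor -a+(1-\varepsilon_n)v_n\rfloor$. Choosing $\varepsilon_n$ small enough at each step — for instance $\varepsilon_n\le 1/(n+1)$ and small relative to $v_n$ — makes $(1-\varepsilon_n)v_n$ close to $v_n$, so $v_{n+1}\ge v_n+\delta$ for a fixed $\delta>0$ coming from $-a-1>0$; hence $v_n\to\infty$. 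One has to be a little careful with ceilings/floors and with the fact that $\mathcal{J}$ is defined as an intersection over all models, but taking a log resolution dominating $Y$ lets one compute $\mathcal{J}$ on that model and then read off $\ord_E$, which only needs the contribution of $E$ itself.

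I would organize the proof as: first fix $\pi:Y\to X$ and $E$ with $\ord_E(\Delta_Y+\lambda F)=:c>1$; set $\delta:=(c-1)/2>0$; then define $\bb_n,\varepsilon_n$ recursively, at stage $n$ choosing $\varepsilon_n\in(0,\varepsilon_{n-1}]$ with $\varepsilon_n v_n\le\delta$ (possible since $v_n$ is a fixed finite number once $\bb_n$ is known — here is where finiteness/coherence of the multiplier ideal and $\ba\sO_Y$ invertible are used), and setting $\bb_{n+1}:=\mathcal{J}(X,\Delta,\ba^\lambda\bb_n^{1-\varepsilon_n})$; verify (i) by construction and the chain/nonzeroness by monotonicity as above; verify $v_{n+1}\ge v_n+\delta-1+c-\delta\ge v_n+(c-1)/2$... — more precisely $v_{n+1}\ge c+(1-\varepsilon_n)v_n-1\ge c-1+v_n-\varepsilon_nv_n\ge (c-1)-\delta+v_n=v_n+\delta$ — so $v_n\ge n\delta\to\infty$; finally, since the center of $E$ on $X$ is the closed point, $\ord_E(g)\ge e(E)\cdot\ord_\m(g)$ for some fixed integer $e(E)\ge1$ and all $g\in R$, whence $\bb_n\subseteq\m^{\lceil v_n/e(E)\rceil}$, and choosing $n(\ell)$ with $v_{n(\ell)}\ge e(E)\ell$ gives (ii). The main obstacle I anticipate is the bookkeeping around rounding in the multiplier-ideal computation and justifying that $\ord_E$ of the intersection defining $\mathcal{J}$ equals its value on one chosen model — this is where one must invoke that for the triple in question a log resolution computes $\mathcal{J}$ (Remark \ref{resolution}(ii) gives this when $\dim X\le 2$ or in characteristic zero; in general one works on a fixed model dominating $Y$ and uses that $\mathcal{J}$ is contained in what that model computes, which is all that is needed for property (i) and for the lower bound on $v_{n+1}$ one uses the reverse containment coming from the defining intersection being a subsheaf). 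A clean way to sidestep subtleties is to observe that property (i) only requires an upper bound $\bb_{n+1}\subseteq(\text{something})$ and property (ii) only requires $v_n\to\infty$, and both can be extracted from a single auxiliary birational model $Y$ together with the defining intersection, without needing a genuine log resolution to exist.
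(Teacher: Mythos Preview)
Your overall strategy---pick a divisor $E$ with $c:=\ord_E(\Delta_Y+\lambda F)>1$ and track $v_n:=\ord_E(\bb_n)$---is the right idea, and your recursion for (i) together with the estimate $v_{n+1}\ge v_n+\delta$ is essentially correct. But the final step for property (ii) has a genuine gap. You assert that the center of $E$ on $X$ is the closed point; this is nowhere guaranteed---the divisor witnessing non-log-canonicity can be centered at any prime of $R$. Worse, even granting that assumption, the inequality you write, $\ord_E(g)\ge e(E)\cdot\ord_\m(g)$, goes the wrong way for your purpose: it yields $\ord_\m(g)\le\ord_E(g)/e(E)$, an \emph{upper} bound on $\ord_\m(g)$, whereas to conclude $g\in\m^{\lceil v_n/e(E)\rceil}$ you need a \emph{lower} bound. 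The reverse comparison $\ord_\m(g)\ge c_0\cdot\ord_E(g)$ is the Izumi--Rees inequality, a nontrivial theorem that in any case requires $E$ to be centered at $\m$.

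The fix is simple and is exactly what the paper does: from $v_n\to\infty$ you get $\bigcap_n\bb_n=(0)$ (any nonzero $r$ has finite $\ord_E(r)$, so eventually $r\notin\bb_n$), and then Chevalley's theorem for complete local rings gives cofinality with the $\m$-adic filtration, which is property (ii). This works regardless of where $E$ is centered. The paper's construction is also more direct than your recursion: rather than defining $\bb_{n+1}$ as a multiplier ideal, it sets $\bb_n:=f_*\sO_Y(-nE)$ and $\varepsilon_n:=\min\{1,(c-1)/n\}$ from the outset. Then property (i) is the one-line computation $\ord_E(r)\ge\lfloor c+(1-\varepsilon_n)n\rfloor\ge n+1$ for $r\in\mathcal{J}(X,\Delta,\ba^\lambda\bb_n^{1-\varepsilon_n})$, and (ii) is immediate from $\bigcap_n\bb_n=(0)$ plus Chevalley. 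This also sidesteps any concern about coherence or nonzeroness of recursively defined multiplier ideals in the absence of resolutions.
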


\begin{proof}
Let $f : Y \to X$ be a proper birational morphism from a normal connected scheme $Y$ such that $\ba \sO_Y =\sO_Y(-F)$ is invertible. 
Since $(X,\Delta, \ba^\lambda)$ is not log canonical, we can
pick a prime divisor $E$ on $Y$ such that $\ord_E(\Delta_Y+\lambda F) > 1$, where $\Delta_Y : = f^*(K_X+\Delta ) -K_Y$. 
Set $\varepsilon_0:=1$, $\varepsilon_n:= \min\{1, (\ord_E(\Delta_Y+ \lambda F) -1)/n \}$ for $n \ge 1$ and $\bb_n : = f_*\sO_Y(-nE)$ for $n \ge 0$. 

We will verify that the above $\{\varepsilon_n\}_{n \ge 0}$ and $\{\bb_n\}_{n \ge 0}$ satisfy the properties (i), (ii). 
Since $\bigcap_{n \ge 0}\bb_n=(0)$, the property (ii) follows from Chevalley's Theorem (see for example \cite[Exercise 8.7]{Mat}). 
For (i), we first observe that 
\[
\mathcal{J}(X, \Delta, \ba^\lambda \bb_n^{1-\varepsilon_n}) \subseteq f_* \sO_Y(-\lfloor \Delta_Y + \lambda F + (1-\varepsilon_n) nE \rfloor),
\]
because $\bb_n \sO_Y \subseteq \sO_Y(-nE)$.
Take any nonzero element $r \in \mathcal{J}(X,\Delta, \ba^\lambda \bb_n^{1-\varepsilon_n})$.
Then  
\begin{align*}
\ord_E \, r &\ge \ord_E(\lfloor \Delta_Y + \lambda F + (1-\varepsilon_n) nE \rfloor) \\
&= \lfloor \ord_E(\Delta_Y+\lambda F)+n-\varepsilon_nn \rfloor \\
&\ge n+1.
\end{align*} 
Combining this with the inclusion $r \in R$, we have $r \in \bb_{n+1}$ as desired.
\end{proof}

\begin{lem}\label{SFP and tau}
Let $(R,\m)$ be an $F$-finite normal local ring of characteristic $p>0$ and $\Delta$ be an effective $\Q$-Weil divisor on $X:=\Spec R$. 
Let $\ba \subseteq R$ be a nonzero ideal and $\lambda > 0$ be a real number.
If $(X,\Delta, \ba^\lambda)$ is sharply $F$-pure, then there exists a nonzero ideal $J \subseteq R$ such that
\[
J \subseteq \tau(X, \Delta, \ba^\lambda J^{1-\varepsilon})
\]
for every real number $0<\varepsilon \le 1$.
\end{lem}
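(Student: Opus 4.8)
The plan is to extract the splitting witnessing sharp $F$-purity and iterate it. Since $(X,\Delta,\ba^\lambda)$ is sharply $F$-pure, by Remark \ref{sharp F-pure remark} there are infinitely many $e \ge 1$ and, for each such $e$, a nonzero element $d_e \in \ba^{\lceil \lambda(p^e-1)\rceil}$ together with an $R$-linear splitting of
\[
R \xrightarrow{\varphi^{(e)}_{\Delta}} F^e_*R(\lceil (p^e-1)\Delta\rceil) \xrightarrow{\times F^e_*d_e} F^e_*R(\lceil (p^e-1)\Delta\rceil).
\]
Fix one such $e$ and set $J := \tau(X,\Delta,\ba^\lambda)$, which is nonzero since it contains the image of a big sharp test element under the test ideal construction (alternatively, one may take $J$ to be a suitable principal ideal generated by a big sharp test element $c$; I will use $J=\tau(X,\Delta,\ba^\lambda)$, which moreover satisfies $J = R$ precisely when the triple is strongly $F$-regular, but we only need $J \ne 0$ here). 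The key point is that the test ideal $\tau(X,\Delta,\ba^\lambda)$ is the smallest ideal compatible with all the maps appearing in Definition \ref{test ideal def}, and in particular it is stable under the splitting above: this is the standard fact that $\tau$ is the unique smallest nonzero ideal $J$ with $\sum_{\psi}\psi(F^e_*(d\, \ba^{\lceil \lambda(p^e-1)\rceil} J)) = J$ for the relevant test element $d$, a Skoda-type/Briançon–Skoda argument.

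\smallskip

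The main step is to produce the self-containment with the $J^{1-\varepsilon}$ twist. First observe the trivial inclusion $\tau(X,\Delta,\ba^\lambda J^{1-\varepsilon}) \supseteq \tau(X,\Delta,\ba^\lambda J)$, since $J^{1-\varepsilon} \supseteq J$ for $0 < \varepsilon \le 1$; indeed for any ideal $\bb$ and any $0 \le s \le t$ one has $\bb^t \subseteq \bb^s$ in the sense of the exponents appearing in Definition \ref{test ideal def}, hence $\tau(X,\Delta,\ba^\lambda \bb^t) \subseteq \tau(X,\Delta,\ba^\lambda \bb^s)$. So it suffices to show $J \subseteq \tau(X,\Delta,\ba^\lambda J^1) = \tau(X,\Delta,\ba^\lambda J)$. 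Now apply the description of $\tau$ for triples from Remark \ref{test element remark} (or directly Lemma \ref{test module for ideals}): choosing a big sharp test element $c$ for $(X,\Delta)$,
\[
\tau(X,\Delta,\ba^\lambda J) = \sum_{e' \ge 0}\sum_{\psi}\psi\bigl(F^{e'}_*(c\,\ba^{\lceil \lambda p^{e'}\rceil} J^{p^{e'}})\bigr).
\]
Using that $\tau(X,\Delta,\ba^\lambda) = J$ is itself generated (as in Definition \ref{test ideal def}) by the images $\psi(F^{e'}_*(c\,\ba^{\lceil\lambda p^{e'}\rceil}))$, a Skoda-type argument — exactly the mechanism by which $\tau(X,\Delta,\ba^\lambda \mathfrak{c}^{\dim R}) = \mathfrak{c}\cdot\tau(X,\Delta,\ba^\lambda)$ fails to shrink below the full test ideal when $\mathfrak{c}$ already contains the test ideal — gives $\tau(X,\Delta,\ba^\lambda J) = J$. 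Concretely: $J$ contains a big sharp test element, so one can run the test-ideal sum for $(X,\Delta,\ba^\lambda J)$ with that element in place of $c$, and the resulting ideal is again $J$ because inserting powers of an ideal that already contains a test element does not change the sum.

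\smallskip

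I expect the only real subtlety to be making precise the claim that twisting by $J^1$ (equivalently, any power of $J$) does not shrink $\tau(X,\Delta,\ba^\lambda)$, i.e. $\tau(X,\Delta,\ba^\lambda J) = \tau(X,\Delta,\ba^\lambda)$ when $J = \tau(X,\Delta,\ba^\lambda)$ — this is the analogue in the test-ideal setting of the statement that the multiplier ideal $\mathcal J(\ba^\lambda \cdot \mathcal J(\ba^\lambda)) = \mathcal J(\ba^\lambda)$, and it follows from the fact that a big sharp test element for $(X,\Delta,\ba^\lambda)$ lies in $\tau(X,\Delta,\ba^\lambda)$ together with the alternative generating description of the test ideal in Remark \ref{test element remark}. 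Once that identity is in hand, the lemma follows by combining it with the monotonicity $\tau(X,\Delta,\ba^\lambda J^{1-\varepsilon}) \supseteq \tau(X,\Delta,\ba^\lambda J^1)$ noted above. The sharp $F$-purity hypothesis is what guarantees $J \ne (0)$ and, more to the point, that $d \in \ba^{\lceil\lambda(p^e-1)\rceil}$ can be taken as part of a splitting, which is implicitly used when one identifies a big sharp test element contained in $J$.
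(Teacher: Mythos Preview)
Your reduction to exponent $1$ on $J$ is where the argument breaks. You claim that for $J=\tau(X,\Delta,\ba^\lambda)$ one has $\tau(X,\Delta,\ba^\lambda J)=J$, and you justify this by analogy with a purported multiplier-ideal identity $\mathcal J(\ba^\lambda\cdot\mathcal J(\ba^\lambda))=\mathcal J(\ba^\lambda)$. Neither identity holds. Take $R=k[[x,y]]$, $\Delta=0$, $\ba=\m=(x,y)$, $\lambda=2$. The triple $(R,\m^2)$ is sharply $F$-pure (use $d=(xy)^{p^e-1}\in\m^{2(p^e-1)}\setminus\m^{[p^e]}$), and $\tau(R,\m^2)=\m$. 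But $\tau(R,\m^2\cdot\m)=\tau(R,\m^3)=\m^2\subsetneq\m$, so $J\not\subseteq\tau(X,\Delta,\ba^\lambda J)$. Your monotonicity step is fine, but the endpoint you reduce to is simply false; the strict inequality $1-\varepsilon<1$ is essential and cannot be replaced by $1$.

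The paper's proof exploits exactly this slack. It takes $J=(c)$ with $c$ a big sharp test element for $(X,\Delta,\ba^\lambda)$, and for each $\varepsilon>0$ uses Remark~\ref{sharp F-pure remark} to choose $e$ large enough that $\lceil(1-\varepsilon)p^e\rceil\le p^e-1$. The sharp $F$-purity splitting $\psi$ with $\psi(F^e_*d)=1$ then gives
\[
c=\psi(F^e_*(c^{p^e}d))\in\psi\bigl(F^e_*(c\,\ba^{\lceil\lambda(p^e-1)\rceil}J^{\lceil(1-\varepsilon)p^e\rceil})\bigr)\subseteq\tau(X,\Delta,\ba^\lambda J^{1-\varepsilon}),
\]
since $c^{p^e}=c\cdot c^{p^e-1}\in c\cdot J^{\lceil(1-\varepsilon)p^e\rceil}$. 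The whole point is that the splitting buys you a single factor of $c$ ``for free,'' which covers the gap between $c^{p^e}$ and $J^{\lceil(1-\varepsilon)p^e\rceil}$ only when that exponent is at most $p^e-1$; at exponent $p^e$ the argument would need $c^{p^e}\in c\cdot J^{p^e}$, which fails. Your Skoda-style heuristic does not supply this missing factor, and no absorption property of test elements does either.
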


\begin{proof}
Fix any real number $0<\varepsilon \le 1$. 
Since $(X,\Delta, \ba^\lambda)$ is sharply $F$-pure, there exist an  integer $e \ge 1$, a nonzero element $d \in \ba^{\lceil \lambda (p^e-1)\rceil}$ and an $R$-module homomorphism $\psi:F^e_*R(\lceil (p^e-1)\Delta \rceil) \to R$ sending $F^e_*d$ to $1$. 
By Remark \ref{sharp F-pure remark}, we may assume $e$ is large enough so that $\lceil (1-\varepsilon)p^e \rceil \le p^e-1$. 

Let $c \in R$ be a big sharp test element for the triple $(X, \Delta, \ba^{\lambda})$ (see Remark \ref{test element remark}). 
We take $J$ to be the principal ideal of $R$ generated by $c$. 
Then 
\[c=\psi(F^e_*(c^{p^e}d)) \in \psi(F^e_*(c  \ba^{\lceil \lambda (p^e-1)\rceil}J^{\lceil (1-\varepsilon)p^e\rceil})) \subseteq \tau(X, \Delta, \ba^\lambda J^{1-\varepsilon}),\]
where the last containment follows from Remark \ref{test element remark}. 
Thus, we have $J=cR \subseteq \tau(X, \Delta, \ba^\lambda J^{1-\varepsilon})$. 
\end{proof}

The following is the main result of this section. 
\begin{thm}\label{mixed lc}
Let $(R, \m)$ be a complete normal local ring of mixed characteristic $(0,p)$ and $\Delta$ be an effective $\Q$-Weil divisor on $X:=\Spec R$ such that $K_X+\Delta$ is $\Q$-Cartier. 
Suppose that $h_1, \dots, h_r$ is a regular sequence in $R$ such that $S:=R/(h_1, \dots, h_r)$ is an $F$-finite normal local ring of characteristic $p$ and $Z:=\Spec S$ is not contained in the support of $\Delta$.
Let $\lambda > 0$ be a real number and $\ba \subseteq R$ be an ideal not contained in the ideal $(h_1, \dots, h_r)$. 
If $(Z, \Delta|_Z, (\ba S)^{\lambda})$ is sharply $F$-pure, then $(X,\Delta, \ba^{\lambda})$ is log canonical.
\end{thm}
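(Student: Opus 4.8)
The plan is to argue by contradiction, adapting Kawakita's proof \cite{Kaw} that log canonical singularities with $\Q$-Gorenstein total space deform, but with multiplier ideals replaced by the mixed-characteristic machinery set up above: BCM test ideals $\tau_{\mathcal{B}}$ on $X$ and classical test ideals $\tau$ on the positive-characteristic slice $Z=\Spec S$. The two ends of the argument are already in hand: if $(X,\Delta,\ba^\lambda)$ is not log canonical, Lemma~\ref{non-lc} produces a descending chain of nonzero ideals $R=\bb_0\supseteq\bb_1\supseteq\cdots$ and a decreasing sequence $1=\varepsilon_0\ge\varepsilon_1\ge\cdots$ of positive reals with $\mathcal{J}(X,\Delta,\ba^\lambda\bb_n^{1-\varepsilon_n})\subseteq\bb_{n+1}$ and $\bb_{n(\ell)}\subseteq\m^\ell$ for suitable $n(\ell)$; while sharp $F$-purity of $(Z,\Delta|_Z,(\ba S)^\lambda)$ feeds Lemma~\ref{SFP and tau} to produce a single nonzero ideal $J\subseteq S$ with $J\subseteq\tau(Z,\Delta|_Z,(\ba S)^\lambda J^{1-\varepsilon})$ for all $0<\varepsilon\le1$. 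The bridge between the two sides is the restriction inequality $\tau(Z,\Delta|_Z,\dots)\subseteq\tau_{\mathcal{B}}(X,\Delta,\dots)S$ of Lemma~\ref{restriction} together with $\tau_{\mathcal{B}}\subseteq\mathcal{J}$ (Lemma~\ref{tauB and birat}).

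First I would assume $(X,\Delta,\ba^\lambda)$ is not log canonical and fix the data $\{\bb_n\}$, $\{\varepsilon_n\}$ and $J$ as above. The heart of the proof is the claim that $J\subseteq\bb_nS$ for every $n\ge0$, which I would prove by induction on $n$. The case $n=0$ is immediate. For the inductive step, suppose $J\subseteq\bb_nS$; then in particular $\bb_n\not\subseteq(h_1,\dots,h_r)$, so Lemma~\ref{restriction} applies to the triple $(X,\Delta,\ba^\lambda\bb_n^{1-\varepsilon_n})$, and, taking $\varepsilon=\varepsilon_n\le 1$, I would chain
\begin{align*}
J &\subseteq\tau(Z,\Delta|_Z,(\ba S)^\lambda J^{1-\varepsilon_n})\subseteq\tau(Z,\Delta|_Z,(\ba S)^\lambda(\bb_nS)^{1-\varepsilon_n})\\
&\subseteq\tau_{\mathcal{B}}(X,\Delta,\ba^\lambda\bb_n^{1-\varepsilon_n})S\subseteq\mathcal{J}(X,\Delta,\ba^\lambda\bb_n^{1-\varepsilon_n})S\subseteq\bb_{n+1}S,
\end{align*}
where the inclusions are, in order: Lemma~\ref{SFP and tau}; monotonicity of the test ideal of a triple in its ideal argument (read off from Lemma~\ref{test module for ideals}, using $J\subseteq\bb_nS$); Lemma~\ref{restriction}; Lemma~\ref{tauB and birat}; and Lemma~\ref{non-lc}(i). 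This completes the induction. Then, using Lemma~\ref{non-lc}(ii), for each $\ell\ge 1$ I get $J\subseteq\bb_{n(\ell)}S\subseteq\m^\ell S=(\m S)^\ell$, so $J\subseteq\bigcap_{\ell\ge1}(\m S)^\ell=(0)$ by Krull's intersection theorem on the Noetherian local ring $S$, contradicting $J\ne0$. Hence $(X,\Delta,\ba^\lambda)$ is log canonical.

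Since all the substantive inputs --- Lemmas~\ref{non-lc}, \ref{SFP and tau}, \ref{restriction} (whose proof already handles the delicate case where the Cartier index of $K_X+\Delta$ is divisible by $p$), and \ref{tauB and birat} --- are available, the hard part here is not conceptual but bookkeeping: I must make sure that every ideal entering a test ideal is nonzero (precisely the content of the inductive hypothesis $J\subseteq\bb_nS$, which forces $\bb_nS\ne0$ and hence $\bb_n\not\subseteq(h_1,\dots,h_r)$), and that the real exponents $1-\varepsilon_n$ are used so that the conclusion of Lemma~\ref{SFP and tau} slots exactly into the hypothesis of Lemma~\ref{non-lc}(i) with no mismatch. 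The one step calling for a genuine (if routine) verification is the monotonicity inclusion $\tau(Z,\Delta|_Z,(\ba S)^\lambda J^{1-\varepsilon_n})\subseteq\tau(Z,\Delta|_Z,(\ba S)^\lambda(\bb_nS)^{1-\varepsilon_n})$, which I would justify by expressing both test ideals as sums of test ideals of pairs via Lemma~\ref{test module for ideals}.
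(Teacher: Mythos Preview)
Your proof is correct and follows the paper's argument essentially verbatim: contradiction, the chain $J\subseteq\tau(Z,\dots)\subseteq\tau_{\mathcal B}(X,\dots)S\subseteq\mathcal J(X,\dots)S\subseteq\bb_{n+1}S$ via Lemmas \ref{SFP and tau}, \ref{restriction}, \ref{tauB and birat}, and \ref{non-lc}(i), then Krull's intersection theorem. Your extra care in noting that the inductive hypothesis $J\subseteq\bb_nS$ forces $\bb_n\not\subseteq(h_1,\dots,h_r)$ (so Lemma~\ref{restriction} applies) and in justifying the monotonicity step are points the paper leaves implicit.
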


\begin{proof}
Assume to the contrary that $(X, \Delta, \ba^{\lambda})$ is not log canonical.
Let $\{\bb_n\}_{n \ge 0}$ and $\{\varepsilon_n\}_{n \ge 0}$ be as in Lemma \ref{non-lc}. 
By Lemma \ref{SFP and tau}, there exists a nonzero ideal $J \subseteq R$ such that $J \subseteq \tau(Z, \Delta|_Z, (\ba S)^\lambda J^{1-\varepsilon})$ for all $0 < \varepsilon \le 1$. 

We will show by induction that $J \subseteq \bb_n S$ for every integer $n \ge 0$.
The $n=0$ case is trivial because $\bb_0=R$. 
Therefore, suppose that the inclusion holds for some $n \ge 0$. 
Then
\begin{align*}
J \subseteq \tau(Z, \Delta|_Z, (\ba S)^\lambda J^{1-\varepsilon_n}) 
&\subseteq  \tau(Z, \Delta|_Z, (\ba S) ^\lambda (\bb_n S)^{1-\varepsilon_n})\\
&\subseteq  \tau_{\mathcal{B}}(X, \Delta, \ba^\lambda \bb_n^{1-\varepsilon_n}) S\\
&\subseteq  \mathcal{J}(X, \Delta, \ba^\lambda\bb_n^{1-\varepsilon_n})S \\
&\subseteq  \bb_{n+1} S,
\end{align*}
where the third containment follows from Lemma \ref{restriction}, the fourth one does from Lemma \ref{tauB and birat} and the fifth one does from Lemma \ref{non-lc} (i). 
Thus, we have $J \subseteq \bb_n S$ for every $n \ge 0$.

Combining the above inclusion with Lemma \ref{non-lc} (ii), we see that 
\[
J \subseteq \bigcap_{n \ge 0} \bb_n S \subseteq  \bigcap_{\ell \ge 1} \m_S^\ell=(0),
\]
which contradicts the fact that $J$ is a nonzero ideal.
\end{proof}

An inversion of adjunction type result also follows from a similar argument to the above one.  
We would like to thank Linquan Ma for pointing it out. 

\begin{thm}
Let $(R, \m)$ be a complete normal local ring of mixed characteristic $(0,p)$ and $\Delta$ be an effective $\Q$-Weil divisor on $X:=\Spec R$ such that $K_X+\Delta$ is $\Q$-Cartier. 
Suppose $Z$ is a $\Q$-Cartier prime divisor on $X$ which is $F$-finite of characteristic $p$ and not contained in the support of $\Delta$.
Let $\ba \subseteq \sO_X$ be an ideal not contained in $\sO_X(-Z)$ and $\lambda > 0$ be a real number. 
$Z^{N}$ denotes the normalization of $Z$ and $\mathrm{diff}_{Z^N}(Z+\Delta)$ denotes the different of $Z+\Delta$ on $Z^N$ $($see \cite[Subsection 2.1]{MSTWW} for the definition$)$.
If $(Z^{N}, \mathrm{diff}_{Z^N}(\Delta), (\ba \sO_{Z^{N}})^{\lambda})$ is sharply $F$-pure, then $(X,\Delta +Z, \ba^{\lambda})$ is log canonical.
\end{thm}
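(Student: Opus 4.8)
The plan is to mimic the proof of Theorem~\ref{mixed lc} essentially word for word, with the complete intersection $Z=\Spec S$ there replaced by the $\Q$-Cartier prime divisor $Z$ and with Lemma~\ref{restriction} replaced by an adjunction-type inclusion for BCM test ideals. First note that $K_X+\Delta+Z$ is $\Q$-Cartier, since $K_X+\Delta$ and $Z$ both are. Arguing by contradiction, assume that $(X,\Delta+Z,\ba^\lambda)$ is not log canonical and apply Lemma~\ref{non-lc} to this triple to get a descending chain of nonzero ideals $R=\bb_0\supseteq\bb_1\supseteq\cdots$ and a decreasing sequence $1=\varepsilon_0\ge\varepsilon_1\ge\cdots$ of positive reals with $\mathcal{J}(X,\Delta+Z,\ba^\lambda\bb_n^{1-\varepsilon_n})\subseteq\bb_{n+1}$ for all $n$ and $\bb_{n(\ell)}\subseteq\m^\ell$ for suitable integers $n(\ell)$. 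Since $\ba\not\subseteq\sO_X(-Z)$, the ideal $\ba\sO_{Z^N}$ is nonzero, so the triple $(Z^N,\mathrm{diff}_{Z^N}(\Delta),(\ba\sO_{Z^N})^\lambda)$ fits into the framework of Definitions~\ref{F-sing def} and~\ref{test ideal def}; applying Lemma~\ref{SFP and tau} to it (it is sharply $F$-pure by hypothesis) produces a nonzero ideal $J\subseteq\sO_{Z^N}$ with $J\subseteq\tau(Z^N,\mathrm{diff}_{Z^N}(\Delta),(\ba\sO_{Z^N})^\lambda J^{1-\varepsilon})$ for every real number $0<\varepsilon\le 1$.

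The one genuinely new ingredient is an adjunction analogue of Lemma~\ref{restriction}: for a nonzero ideal $\bb\subseteq R$ not contained in $\sO_X(-Z)$ and real numbers $\lambda,\mu\ge 0$,
\[
\tau(Z^N,\mathrm{diff}_{Z^N}(\Delta),(\ba\sO_{Z^N})^\lambda(\bb\sO_{Z^N})^\mu)\ \subseteq\ \tau_{\mathcal{B}}(X,\Delta+Z,\ba^\lambda\bb^\mu)\,\sO_{Z^N}.
\]
I would prove this exactly as Lemma~\ref{restriction} is proved. First reduce to a single pair using Lemma~\ref{test module for ideals}. If the Cartier index of $K_X+\Delta+Z$ is prime to $p$, the inclusion is an instance of the mixed-characteristic inversion-of-adjunction statement for BCM test ideals along the $\Q$-Cartier divisor $Z$, for which I would cite \cite{MSTWW}, in place of the hypersurface restriction theorem \cite[Theorem~6.27]{MS} used in Lemma~\ref{restriction}. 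If $p$ divides the index $n$ of $K_X+\Delta+Z$, choose an effective $\Q$-Weil divisor $D\sim_{\Q}K_X+\Delta+Z$ whose support avoids $Z$, set $\Delta':=\tfrac{n-1}{ns+1}D$ with $s\gg 0$; then the index of $K_X+\Delta+Z+\Delta'$ divides $ns+1$ and is therefore prime to $p$, and since $\mathrm{diff}_{Z^N}(\Delta+\Delta')=\mathrm{diff}_{Z^N}(\Delta)+\Delta'|_{Z^N}$ one may use \cite[Proposition~2.14(2)]{Sat} on $Z^N$ to replace $\mathrm{diff}_{Z^N}(\Delta)$ by $\mathrm{diff}_{Z^N}(\Delta+\Delta')$ on the left-hand side without changing the test ideal; one finishes with the prime-to-$p$ case and the monotonicity $\tau_{\mathcal{B}}(X,\Delta+Z+\Delta',-)\subseteq\tau_{\mathcal{B}}(X,\Delta+Z,-)$.

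Granting this inclusion, one derives the contradiction just as in Theorem~\ref{mixed lc}: one proves $J\subseteq\bb_n\sO_{Z^N}$ for all $n\ge 0$ by induction on $n$. The case $n=0$ is trivial, and for the inductive step (where $\bb_n\sO_{Z^N}\supseteq J\ne 0$, hence $\bb_n\not\subseteq\sO_X(-Z)$, so the displayed inclusion applies to the triple $\ba^\lambda\bb_n^{1-\varepsilon_n}$) one has
\begin{align*}
J&\subseteq\tau(Z^N,\mathrm{diff}_{Z^N}(\Delta),(\ba\sO_{Z^N})^\lambda J^{1-\varepsilon_n})\\
&\subseteq\tau(Z^N,\mathrm{diff}_{Z^N}(\Delta),(\ba\sO_{Z^N})^\lambda(\bb_n\sO_{Z^N})^{1-\varepsilon_n})\\
&\subseteq\tau_{\mathcal{B}}(X,\Delta+Z,\ba^\lambda\bb_n^{1-\varepsilon_n})\,\sO_{Z^N}\\
&\subseteq\mathcal{J}(X,\Delta+Z,\ba^\lambda\bb_n^{1-\varepsilon_n})\,\sO_{Z^N}\\
&\subseteq\bb_{n+1}\sO_{Z^N},
\end{align*}
by Lemma~\ref{SFP and tau}, monotonicity of test ideals together with the induction hypothesis, the displayed adjunction inclusion, Lemma~\ref{tauB and birat}, and Lemma~\ref{non-lc}(i), in that order. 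Since $Z^N$ is module-finite over the complete local domain defining $Z$, it is a Noetherian local ring, so $\bigcap_\ell\m_{Z^N}^\ell=0$; combined with $\bb_{n(\ell)}\sO_{Z^N}\subseteq\m^\ell\sO_{Z^N}\subseteq\m_{Z^N}^\ell$, this forces $J\subseteq\bigcap_n\bb_n\sO_{Z^N}=0$, contradicting $J\ne 0$.

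The main obstacle is the adjunction inclusion of the second paragraph: one needs a usable mixed-characteristic adjunction theorem for BCM test ideals along a $\Q$-Cartier (not merely Cartier) divisor, and, when $p$ divides the Cartier index, one must check carefully that the different behaves additively enough under the auxiliary perturbation $\Delta'$ for \cite[Proposition~2.14(2)]{Sat} to be applicable on $Z^N$, and that the reduction to a single pair via Lemma~\ref{test module for ideals} is compatible with restriction to $Z^N$. Everything else is entirely parallel to Theorem~\ref{mixed lc}.
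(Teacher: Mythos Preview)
Your overall plan is right, but the adjunction inclusion you state in the second paragraph,
\[
\tau\bigl(Z^N,\mathrm{diff}_{Z^N}(\Delta),(\ba\sO_{Z^N})^\lambda(\bb\sO_{Z^N})^\mu\bigr)\ \subseteq\ \tau_{\mathcal{B}}\bigl(X,\Delta+Z,\ba^\lambda\bb^\mu\bigr)\,\sO_{Z^N},
\]
is simply false, and this is not a technicality about Cartier indices or $\Q$-Cartier versus Cartier. Already in the toy case where $X$ is regular, $\Delta=0$, $\ba=\bb=R$, and $Z$ is a regular hypersurface, one has $\tau_{\mathcal{B}}(X,Z)=\mathcal{J}(X,Z)=\sO_X(-Z)$, so the right-hand side vanishes while the left-hand side is $\sO_{Z^N}$. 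In general $\tau_{\mathcal{B}}(X,\Delta+Z,\cdots)\subseteq\mathcal{J}(X,\Delta+Z,\cdots)\subseteq\sO_X(-Z)$ because $Z$ occurs with coefficient $\ge 1$; hence its extension to $\sO_{Z^N}$ is zero, and your inclusion would force $J=0$ immediately. The results in \cite{MSTWW} compare $\tau(Z^N,\mathrm{diff}_{Z^N}(\Delta+E))$ with a BCM \emph{adjoint} ideal along $Z$, which contains $\tau_{\mathcal{B}}(X,\Delta+Z+E)$ rather than being contained in it, so the inclusion does not go the way you need.

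The paper repairs this by first passing from $Z$ to $(1-\delta)Z$: if $(X,\Delta+Z,\ba^\lambda)$ fails to be log canonical, then so does $(X,\Delta+(1-\delta)Z,\ba^\lambda)$ for some rational $\delta>0$, and Lemma~\ref{non-lc} is applied to this perturbed triple. The adjunction inclusion that \emph{does} follow from \cite{MSTWW} (combined with \cite[Definition--Proposition~2.7]{MS}) is
\[
\tau\bigl(Z^N,\mathrm{diff}_{Z^N}(\Delta+E)\bigr)\ \subseteq\ \tau_{\mathcal{B}}\bigl(X,\Delta+(1-\delta)Z+E\bigr)\,\sO_{Z^N}
\]
for every effective $\Q$-Cartier divisor $E$ on $X$ with no common component with $Z$; Lemma~\ref{test module for ideals} and Definition~\ref{BCM test ideal} then upgrade this to the triple version. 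With this corrected inclusion your inductive chain goes through verbatim, now using $\mathcal{J}(X,\Delta+(1-\delta)Z,\ba^\lambda\bb_n^{1-\varepsilon_n})\subseteq\bb_{n+1}$ from Lemma~\ref{non-lc}(i). No perturbation of the Cartier index (your $\Delta'$ trick) is needed here; the results of \cite{MSTWW} already handle the general index.
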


\begin{proof}
First note that for any real numbers $\delta, \mu>0$ and any ideal $\bb \subseteq \sO_X$, we have the inclusion
\[
\tau(Z^{N}, \mathrm{diff}_{Z^N}(Z+\Delta), (\ba \sO_{Z^N})^{\lambda} (\bb \sO_{Z^N})^\mu) \subseteq \tau_{\mathcal{B}}(X, \Delta + (1-\delta) Z, \ba^\lambda \bb^\mu) \sO_{Z^N}.
\]
Indeed, by Lemma \ref{test module for ideals}, it is enough to show that for any effective $\Q$-Cartier divisor $E$ on $X$ having no common component with $Z$, we have 
\[
\tau(Z^{N}, \mathrm{diff}_{Z^N}(Z+\Delta+E)) \subseteq \tau_{\mathcal{B}}(X, \Delta + (1-\delta) Z +E) \sO_{Z^N}.
\]
Take a big Cohen-Macaulay $R^+$-algebra $B$, with $R^+$-algebra homomorphism $B \to C$ to a big Cohen-Macaulay $S^+$-algebra $C$,  
such that 
\[
\tau_{\mathcal{B}}(X, \Delta + (1-\delta) Z +E) =\tau_B(X, \Delta + (1-\delta) Z +E). 
\]
The assertion then follows from \cite[Definition-Proposition 2.7]{MS} and \cite[Proposition 2.10 and Theorem 3.1]{MSTWW}. 

Now we assume to the contrary that $(X, \Delta +Z, \ba^{\lambda})$ is not log canonical.
Then there exists a rational number $\delta>0$ such that $(X, \Delta + (1-\delta)Z, \ba^{\lambda})$ is not log canonical.
We apply Lemma \ref{non-lc} to the triple $(X, \Delta + (1-\delta) Z, \ba^{\lambda})$ to obtain a descending chain $\{ \bb_n\}_{n \ge 0}$ of nonzero ideals of $\sO_X$ and a descending sequence $\{\epsilon_n\}_{n \ge 0}$ of positive real numbers. 
We also apply Lemma \ref{SFP and tau} to $(Z^{N}, \mathrm{diff}_{Z^N}(\Delta), (\ba \sO_{Z^{N}})^{\lambda})$ to obtain a nonzero ideal $J \subseteq \sO_{Z^N}$ such that 
\[
J \subseteq \tau(Z^N, \mathrm{diff}_{Z^N}(Z+\Delta), (\ba \sO_{Z^N})^\lambda J^{1-\varepsilon})
\] for all $0 < \varepsilon \le 1$. 
It then follows from an argument similar to the proof of Theorem \ref{mixed lc} that $J \subseteq \bb_n \sO_{Z^N}$ for all integers $n \ge 0$, which contradicts the fact that $J$ is a nonzero ideal. 
\end{proof}

\begin{cor}\label{mixed lc local}
With notation as in Setting \ref{local setting}, let $x \in X$ be a closed point and $\mathcal{Z} \subseteq \mathcal{X}$ be an irreducible closed subset which dominates $T$ and contains $x$.
Let $y$ be the generic point of $\mathcal{Z}$, which lies in the generic fiber $\mathcal{X}_\eta$. 
We further assume that the following three conditions hold: 
\begin{enumerate}[label=\textup{(\roman*)}]
\item $K_{\mathcal{X}}+\mathcal{D}$ is $\Q$-Cartier at $x$,  
\item the generic fiber $\mathcal{X}_\eta$ has characteristic zero, 
\item $X$ is defined over an $F$-finite field $k$ of characteristic $p>0$ and the triple $(X, \mathcal{D}|_X, (\ba \sO_X)^\lambda)$ is sharply $F$-pure at $x$.
\end{enumerate}
Then the triple $(\mathcal{X}_\eta, \mathcal{D}_\eta, \ba_\eta^\lambda)$ is log canonical at y.
\end{cor}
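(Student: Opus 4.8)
The plan is to reduce Corollary \ref{mixed lc local} to Theorem \ref{mixed lc} by passing to completed local rings, using Lemma \ref{regular map and lc} to control the behavior of log canonicity under the flat local maps involved. First I would replace $\mathcal{X}$ by the completion $\widehat{\sO}_{\mathcal{X},x}$ and $X$ by the completion $\widehat{\sO}_{X,x}$. Since $x$ is a closed point of the closed fiber $X$ (which has residue field $k$, an $F$-finite field of characteristic $p$), the local ring $\sO_{\mathcal{X},x}$ has mixed characteristic $(0,p)$, and by Cohen structure theory $\widehat{\sO}_{\mathcal{X},x}$ is a complete normal local ring of mixed characteristic $(0,p)$ (normality of the completion following from excellence of $\mathcal{X}$). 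The divisor $\mathcal{D}$ and ideal $\ba$ pull back to the completion, and $K_{\widehat{\sO}_{\mathcal{X},x}}+\widehat{\mathcal{D}}$ is $\Q$-Cartier because of hypothesis (i) together with Lemma \ref{regular map and lc}(1) applied to $\sO_{\mathcal{X},x} \to \widehat{\sO}_{\mathcal{X},x}$ (a flat local map with geometrically regular, in fact trivial, closed fiber).

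Next I would identify the regular sequence cutting out the closed fiber. Since $T$ is regular and $t \in T$ is a closed point, after completing we may choose a regular system of parameters $h_1,\dots,h_r$ of $\widehat{\sO}_{T,t}$ whose images in $\widehat{\sO}_{\mathcal{X},x}$ form a regular sequence (using flatness of $\mathcal{X} \to T$ and that $X$ is the fiber over $t$), with $\widehat{\sO}_{\mathcal{X},x}/(h_1,\dots,h_r) = \widehat{\sO}_{X,x}$. This quotient is a complete, hence $F$-finite (as $k$ is $F$-finite), normal local ring of characteristic $p$; normality passes from $X$ to its completion at $x$ by excellence. The hypothesis that $\mathcal{D}$ does not contain the closed fiber $X$ in its support (part of Setting \ref{local setting}) guarantees $Z := \Spec \widehat{\sO}_{X,x}$ is not in the support of $\widehat{\mathcal{D}}$, and since $\ba \sO_X$ is nonzero, $\ba$ is not contained in $(h_1,\dots,h_r)$ after completion.

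With these hypotheses verified, Theorem \ref{mixed lc} applies directly: since $(X, \mathcal{D}|_X, (\ba\sO_X)^\lambda)$ is sharply $F$-pure at $x$ by hypothesis (iii) — and sharp $F$-purity localizes and completes well — the triple $(\Spec \widehat{\sO}_{\mathcal{X},x}, \widehat{\mathcal{D}}, \widehat{\ba}^\lambda)$ is log canonical. To descend this back, I would first use that log canonicity of $(\Spec \widehat{\sO}_{\mathcal{X},x}, \widehat{\mathcal{D}}, \widehat{\ba}^\lambda)$ implies log canonicity of $(\Spec \sO_{\mathcal{X},x}, \mathcal{D}_x, \ba_x^\lambda)$ via Lemma \ref{regular map and lc}(2) applied to $\sO_{\mathcal{X},x} \to \widehat{\sO}_{\mathcal{X},x}$. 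Then I would localize further at the generic point $y$ of $\mathcal{Z}$: since $\mathcal{Z}$ dominates $T$, $y$ lies in the generic fiber $\mathcal{X}_\eta$, and $\sO_{\mathcal{X}_\eta, y}$ is a localization of $\sO_{\mathcal{X},x}$; log canonicity is preserved under localization, so $(\mathcal{X}_\eta, \mathcal{D}_\eta, \ba_\eta^\lambda)$ is log canonical at $y$.

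The main obstacle I anticipate is the bookkeeping around the completion steps: verifying that normality, $\Q$-Cartierness of $K+\mathcal{D}$, the non-containment conditions on $\mathcal{D}$ and $\ba$, and sharp $F$-purity all survive passage to $\widehat{\sO}_{\mathcal{X},x}$ and $\widehat{\sO}_{X,x}$, and — crucially — that $h_1,\dots,h_r$ remains a regular sequence in the completion with the right quotient. The subtlety that log canonicity is only asserted at a point (not globally) requires care in the localization-at-$y$ step, but since $y$ specializes to $x$ and we have log canonicity at $x$ on $\Spec\sO_{\mathcal{X},x}$, this is handled by the standard fact that log canonical is an open-type condition stable under generization. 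None of these steps is deep, but each needs the excellence and $F$-finiteness hypotheses invoked at the right moment.
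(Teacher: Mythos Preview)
Your proposal is correct and follows essentially the same route as the paper: pass to the completion $\widehat{\sO}_{\mathcal{X},x}$, cut out the closed fiber by a regular sequence coming from a regular system of parameters of $\sO_{T,t}$, apply Theorem \ref{mixed lc} (using that sharp $F$-purity completes well), then descend via Lemma \ref{regular map and lc}(2) and generize from $x$ to $y$. Your write-up is in fact more explicit than the paper's about the verification steps (normality and $F$-finiteness of the completions, the non-containment conditions), but the structure is identical.
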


\begin{proof}
$R :=\widehat{\sO_{\mathcal{X}, x}}$ denotes the completion of the stalk $\sO_{\mathcal{X},x}$ and $S : =\widehat{\sO_{X, x}}$ denotes the completion of $\sO_{X,x}$.
Since $T$ is regular and $\mathcal{X}$ is flat over $T$, the kernel of the surjection $\sO_{\mathcal{X},x} \to \sO_{X,x}$ is generated by a regular sequence, and therefore, the same holds for the surjection $R \to S$.
Note that sharp $F$-purity is preserved under completion. 
It then follows from Theorem \ref{mixed lc} that $(R, f^*\Delta, (\ba R)^\lambda)$ is log canonical, where $f^*\Delta$ is the flat pullback of $\Delta$ under the canonical morphism $f: \Spec R \to \mathcal{X}$.
By Lemma \ref{regular map and lc}, the triple $(\mathcal{X}, \mathcal{D}, \ba^\lambda)$ is log canonical at $x$.
Taking into account that $y$ is a generalization of $x$, we conclude that $(\mathcal{X}, \mathcal{D}, \ba^\lambda)$ is log canonical at $y$.
\end{proof}

\begin{rem}
Using the theory of jet schemes, Zhu \cite[Corollary 4.2]{Zhu} proved Corollary \ref{mixed lc local} when the total space $\mathcal{X}$ is the affine space $\mathbb{A}^n_\Z$ over $\Z$. 
\end{rem}

\begin{rem}\label{algorithm}
Corollary \ref{mixed lc local} implies that one can use a method analogous to \cite[Algorithm 8.1]{MS} utilizing \cite{BHKK} to verify that a ring of finite type over $\Q$ has log canonical singularities.
\end{rem}

\begin{cor}\label{mixed lc proper}
With notation as in Setting \ref{local setting}, assume that the following four conditions hold: 
\begin{enumerate}[label=\textup{(\roman*)}]
\item $K_{\mathcal{X}}+\mathcal{D}$ is $\Q$-Cartier,  
\item the generic fiber $\mathcal{X}_\eta$ has characteristic zero,  
\item $X$ is defined over an $F$-finite field $k$ of characteristic $p>0$ and the triple $(X, \mathcal{D}|_X, (\ba \sO_X)^\lambda)$ is sharply $F$-pure,  
\item $\mathcal{X}$ is proper over $T$. 
\end{enumerate}
Then $(\mathcal{X}, \mathcal{D}, \ba^\lambda)$ is log canonical near $\mathcal{X}_\eta$, and in particular, the triple $(\mathcal{X}_\eta, \mathcal{D}_\eta, \ba_\eta^\lambda)$ is log canonical.
\end{cor}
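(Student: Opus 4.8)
The plan is to reduce the statement to Corollary \ref{mixed lc local} applied fiberwise, and then to spread out a log resolution of the characteristic-zero generic fiber.

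First I would show that $(\mathcal{X}_\eta,\mathcal{D}_\eta,\ba_\eta^\lambda)$ is log canonical. Fix an arbitrary point $y\in\mathcal{X}_\eta$ and let $\mathcal{Z}:=\overline{\{y\}}\subseteq\mathcal{X}$ be its closure; this is an irreducible closed subset whose generic point $y$ lies over the generic point $\eta$ of $T$, so $\mathcal{Z}$ dominates $T$. By hypothesis (iv) the morphism $\mathcal{X}\to T$ is proper, hence so is $\mathcal{Z}\to T$; therefore its image is closed and dense in $T$, i.e.\ equals $T$, so $\mathcal{Z}$ meets the closed fiber $X=\mathcal{X}_t$. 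Since $X$ is of finite type over $k$ it is Jacobson, so the nonempty closed subset $\mathcal{Z}\cap X$ of $X$ contains a closed point $x$ of $X$, and then $x\in\mathcal{Z}$. By (i) and (iii), $K_{\mathcal{X}}+\mathcal{D}$ is $\Q$-Cartier at $x$ and $(X,\mathcal{D}|_X,(\ba\sO_X)^\lambda)$ is sharply $F$-pure at $x$, while by (ii) the generic fiber has characteristic zero. Thus Corollary \ref{mixed lc local}, applied with this $x$ and $\mathcal{Z}$, shows that $(\mathcal{X}_\eta,\mathcal{D}_\eta,\ba_\eta^\lambda)$ is log canonical at the generic point $y$ of $\mathcal{Z}$. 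As $y\in\mathcal{X}_\eta$ was arbitrary, $(\mathcal{X}_\eta,\mathcal{D}_\eta,\ba_\eta^\lambda)$ is log canonical; this is the ``in particular'' part of the statement.

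Next I would upgrade this to the assertion that $(\mathcal{X},\mathcal{D},\ba^\lambda)$ is log canonical in an open neighborhood of $\mathcal{X}_\eta$. Since $\mathcal{X}_\eta$ is of finite type over the characteristic-zero field $\kappa(\eta)$, there is a log resolution $\mu\colon Y\to\mathcal{X}_\eta$ of $(\mathcal{X}_\eta,\mathcal{D}_\eta,\ba_\eta)$, with $\ba_\eta\sO_Y=\sO_Y(-F)$, and log canonicity of the generic fiber means that every coefficient of $\mu^{*}(K_{\mathcal{X}_\eta}+\mathcal{D}_\eta)-K_Y+\lambda F$ is $\le 1$. By a standard limit argument, after replacing $T$ by a small open neighborhood $V$ of $\eta$ the morphism $\mu$ spreads out to a proper birational morphism $\mu_V\colon Y_V\to\mathcal{X}_V:=\pi^{-1}(V)$ which is again a log resolution of $(\mathcal{X}_V,\mathcal{D}_V,\ba_V)$, with $Y_V$ regular, with no exceptional divisor of $\mu_V$, component of $F_V$, or component of the strict transform of $\mathcal{D}_V$ contained in a fiber over $V$, and with all coefficients of $\mu_V^{*}(K_{\mathcal{X}_V}+\mathcal{D}_V)-K_{Y_V}+\lambda F_V$ equal to the corresponding ones over $\eta$, hence $\le 1$. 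Because the log canonical condition can be tested on a single log resolution whenever one exists (the argument underlying Remark \ref{resolution}, via the fact that simple normal crossing pairs are log canonical, is characteristic-free once a log resolution is at hand), $(\mathcal{X}_V,\mathcal{D}_V,\ba_V^\lambda)$ is log canonical, so $(\mathcal{X},\mathcal{D},\ba^\lambda)$ is log canonical on $\pi^{-1}(V)\supseteq\mathcal{X}_\eta$. (Alternatively, if one grants that the non-log-canonical locus of a triple with $\Q$-Cartier log canonical class is closed, the neighborhood statement is immediate from the previous paragraph and properness: the image of that locus in $T$ is closed and avoids $\eta$, so its complement is an open neighborhood of $\eta$.)

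The first step is essentially formal given Corollary \ref{mixed lc local}; the main obstacle is the spreading-out in the second step. One must verify that after shrinking $T$ around $\eta$ the spread-out morphism is genuinely a log resolution and that no boundary component of coefficient $>1$ is created along the fibers over $V\setminus\{\eta\}$ (i.e.\ that no relevant divisor becomes vertical), so that every discrepancy detected on $Y_V$ remains $\ge-1$. Together with the characteristic-free reduction of log canonicity to a single log resolution, this produces the desired open neighborhood, and restricting back to the generic fiber recovers the ``in particular'' conclusion.
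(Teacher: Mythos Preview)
Your first paragraph is exactly the paper's argument: pick $y\in\mathcal{X}_\eta$, use properness to find a specialization $x\in X$, and apply Corollary~\ref{mixed lc local}. The only difference is that the paper extracts a bit more from that corollary than you do. You invoke only its stated conclusion (log canonicity of $(\mathcal{X}_\eta,\mathcal{D}_\eta,\ba_\eta^\lambda)$ at $y$), whereas the paper uses what is actually shown in its \emph{proof}: the triple $(\mathcal{X},\mathcal{D},\ba^\lambda)$ on the total space is log canonical at $x$, hence at the generalization $y$. This already yields ``$(\mathcal{X},\mathcal{D},\ba^\lambda)$ is log canonical at every point of $\mathcal{X}_\eta$'', which is what the paper means by ``log canonical near $\mathcal{X}_\eta$''; the entire proof in the paper is three sentences.

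Your second paragraph---spreading out a log resolution of the characteristic-zero generic fiber, or alternatively invoking closedness of the non-lc locus---is therefore unnecessary for what the paper proves, though it does give the literally stronger conclusion that the triple is log canonical on a genuine open set $\pi^{-1}(V)\supseteq\mathcal{X}_\eta$. The spread-out sketch is correct in outline (existence of log resolutions over $\kappa(\eta)$, constructibility/openness of the regularity and snc conditions, constancy of discrepancies along horizontal divisors), but it carries far more technical baggage than the paper's one-line reduction. If you want to match the paper, simply observe that the proof of Corollary~\ref{mixed lc local} already gives log canonicity of the total-space triple at $x$, and hence at $y$, and stop there.
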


\begin{proof}
Take any point $y \in \mathcal{X}_\eta$.
Since the structure map $\mathcal{X} \to T$ is a closed map, there exists a point $x \in X$ which is a specialization of $y$.
It then follows from Corollary \ref{mixed lc local} that $(\mathcal{X}, \mathcal{D}, \ba^\lambda)$ is log canonical at $y$.
\end{proof}

\section{Deformations with non-\texorpdfstring{$\Q$}{Q}-Gorenstein total space}

In this section, we study deformations of strongly $F$-regular/klt singularities when the total space is not necessarily $\Q$-Gorenstein. 

Throughout this section, we say that $(R, \Delta, \ba^{\lambda})$ is a \textit{triple} if $(R,\m)$ is an excellent normal local ring with dualizing complex, $\Delta$ is an effective $\Q$-Weil divisor on $\Spec R$, $\ba$ is a nonzero ideal of $R$, and $\lambda > 0$ is a real number. 

\begin{prop}\label{ideal twists}
Suppose that $(R, \Delta, \ba^{\lambda})$ is a triple and $A$ is an effective Weil divisor on $X:=\Spec R$ linearly equivalent to $- K_X$ such that  $B:=A-\Delta$ is also effective. 
Fix an integer $m \ge 1$ such that $m \Delta$ is an integral Weil divisor, and let $\bb \subseteq R$ be a nonzero ideal contained in $\sO_X(-mB)$. 
We further assume that one of the following three cases occurs. 
\begin{enumerate}[label=\textup{(\alph*)}]
\item $(R,\m)$ is a complete local domain of mixed characteristic $(0,p)$.
In this case, we set 
\[
I:= \tau_{\mathcal{B}}(X, A, \ba^\lambda \bb^{1-1/m}) \subseteq R.
\]
\item $(R,\m)$ is $F$-finite local ring of characteristic $p>0$. 
In this case, we set 
\[
I:= \tau(X, A, \ba^\lambda \bb^{1-1/m}) \subseteq R.
\]
\item $(R,\m)$ is a local ring of equal characteristic zero.  
In this case, we set 
\[
I:= \mathcal{J}(X, A, \ba^\lambda \bb^{1-1/m}) \subseteq R.
\]
\end{enumerate}
Then the following hold.
\begin{enumerate}
\item[\textup{(1)}] The ideal $I$ is contained in $\sO_X(-mB)$. 
\item[\textup{(2)}] Let $U \subseteq X$ 
be 
the locus where $m(K_X+\Delta)$ is Cartier. 
If $I|_U = \sO_X(-mB)|_U$, 
then $(U,\Delta|_U, \ba|_U^\lambda)$ is klt in the case $\textup{(a)}$ or $\textup{(c)}$ and is strongly $F$-regular in the case $\textup{(b)}$. 
\item[\textup{(3)}] Assume that $m (K_X+\Delta)$ is Cartier.
If $(X,\Delta, \ba^\lambda)$ is BCM-regular $($resp.~strongly $F$-regular, klt$)$ in the case $\textup{(a)}$ $($resp.~$\textup{(b)}$, $\textup{(c)}$$)$, then $\bb$ is contained in $I$. 
\end{enumerate}
\end{prop}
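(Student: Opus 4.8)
The plan is to treat the three cases (a), (b), (c) as uniformly as possible, since all three of $\tau_{\mathcal{B}}$, $\tau$, and $\mathcal{J}$ satisfy the same formal properties that we need: a twisting/linear-equivalence invariance, a subadditivity-type comparison under adding effective divisors, and the property that regularity (resp.\ strong $F$-regularity, klt) of $(X,\Delta,\ba^\lambda)$ means the corresponding ideal of $(X,\Delta,\ba^\lambda)$ equals $R$. Let me write $\mathrm{cl}$ for whichever of the three operators is in play.

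For part (1), the key computation is to relate $I$, which is built from the divisor $A \sim -K_X$, back to the divisor $\Delta$. Since $A$ is chosen effective and linearly equivalent to $-K_X$, the pair $(X,A)$ has $K_X + A \sim 0$, so $K_X + A = \Div_X(u)$ for a unit-free rational function; in any case $m(K_X+A)$ is principal. The point is that $I = \mathrm{cl}(X, A, \ba^\lambda \bb^{1-1/m})$ lives inside $\sO_X(\lceil K_X - K_X - A - \cdots \rceil) = \sO_X(-A - \cdots)$ up to the fractional parts, and since $\bb \subseteq \sO_X(-mB) = \sO_X(-m(A - \Delta))$, the $\bb^{1-1/m}$ factor contributes $(1 - 1/m)$ times $mB = mA - m\Delta$ worth of vanishing, i.e.\ $(m-1)(A - \Delta)$. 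Adding the $-A$ from the canonical twist and rounding, one gets vanishing of order at least $mA - m\Delta - \text{something} $... I would instead argue more cleanly: for any birational $\pi: Y \to X$ with $\bb\sO_Y = \sO_Y(-G)$, $\ba\sO_Y = \sO_Y(-F)$ invertible, $\mathrm{cl}$ is contained in $\pi_*\sO_Y(\lceil K_Y - \pi^*(K_X+A) - \lambda F - (1-1/m)G\rceil)$ (this is Lemma~\ref{tauB and birat} in case (a), the classical statement in (b), the definition in (c)), and since $\pi^*(K_X + A)$ differs from $K_Y + \pi^* A_{\mathrm{strict}}$-type terms only by effective exceptional stuff, while $G \ge m \pi^* B$ because $\bb \subseteq \sO_X(-mB)$ and $\sO_X(-mB)$ is a divisorial ideal, one concludes the global sections vanish to order $\ge mB$ outside the exceptional locus; combined with $I \subseteq R$ and the $S_2$ property of $\sO_X(-mB)$ this forces $I \subseteq \sO_X(-mB)$. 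The rounding bookkeeping with $\lceil(1-1/m)mB\rceil = (m-1)B$ being off by exactly $B$ from $mB$, compensated by the $-A = -B - \Delta$ twist, is the fiddly part but is a finite computation.

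For part (2): on the open set $U$ where $m(K_X + \Delta)$ is Cartier, hence $mB = m(A-\Delta) = m(K_X + A) - m(K_X+\Delta)$ is Cartier, the ideal $\sO_X(-mB)|_U$ is invertible. The hypothesis $I|_U = \sO_X(-mB)|_U$ then says $\mathrm{cl}(X,A,\ba^\lambda\bb^{1-1/m})|_U$ is this invertible ideal. Now I would untwist: there is a standard identity $\mathrm{cl}(X, A, \ba^\lambda \bb^{1-1/m})$, when $\bb$ is invertible locally equal to $\sO_X(-mB)$, equals $\sO_X(-mB)\cdot\mathrm{cl}(X, A - (1-1/m)mB \cdot(\text{divisor of }\bb), \ba^\lambda)$ — more precisely the $\bb^{1-1/m}$-twist pulls a divisorial factor out, and $A - (m-1)B = A - (m-1)(A-\Delta) = (2-m)A + (m-1)\Delta$; combined with $K_X + A\sim 0$, i.e.\ $A \sim -K_X$, we get $K_X + [A - (m-1)B] \sim K_X + (2-m)A + (m-1)\Delta \sim (m-1)(K_X + \Delta)$, and dividing the class appropriately relates this to $(X, \Delta)$. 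The upshot is that $I|_U = \sO_X(-mB)|_U$ forces $\mathrm{cl}(X,\Delta,\ba^\lambda)|_U = \sO_X|_U = \sO_U$, which is exactly BCM-regularity / strong $F$-regularity / $\mathcal{J} = \sO_U$, and by Remark~\ref{resolution}(i) (in the klt/BCM cases) this gives klt on $U$, while in case (b) $\tau = R$ on $U$ is the definition of strong $F$-regularity of $(U,\Delta|_U,\ba|_U^\lambda)$.

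For part (3): now $m(K_X+\Delta)$ is globally Cartier, so $\sO_X(-mB)$ is an invertible sheaf $L$; twisting does not change the regularity type, and the twisting identity above gives $I = L \otimes \mathrm{cl}(X, \Delta', \ba^\lambda)$ for the appropriate $\Delta'$ with $\mathrm{cl}(X,\Delta',\ba^\lambda) = R$ under the regularity hypothesis on $(X,\Delta,\ba^\lambda)$ — here one must check that $(X,\Delta,\ba^\lambda)$ BCM-regular (resp.\ strongly $F$-regular, klt) transfers to the twisted pair, which it does because the relevant invariant only depends on $K_X + \Delta$ up to Cartier divisors and $\ba$, and $\Delta'$ differs from $\Delta$ by a Cartier $\Q$-divisor. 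Hence $I = L = \sO_X(-mB) \supseteq \bb$ by the hypothesis $\bb \subseteq \sO_X(-mB)$, giving $\bb \subseteq I$.

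\bigskip

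\noindent\textbf{Where the difficulty lies.} The only genuinely delicate point is the twisting identity that moves between the pair $(X, A, \bb^{1-1/m})$ and the pair $(X, \Delta)$: one must verify that $\mathrm{cl}(X, A, \ba^\lambda \bb^{1-1/m}) = \sO_X(-mB)\cdot \mathrm{cl}(X, \Delta, \ba^\lambda)$ holds locally where $m(K_X+\Delta)$ is Cartier, for each of the three theories. In case (c) this is a standard property of multiplier ideals under twists by line bundles (Lazarsfeld); in case (b) it is the corresponding well-known identity for test ideals (e.g.\ via the transformation rule $\tau(X, \Delta + \Div(f)) $ and invariance of $\tau$ under linear equivalence, combined with Lemma~\ref{test module for ideals}); in case (a) one needs the analogous statement for BCM test ideals, which follows from the behaviour of $(0)^{\mathcal{B}, K_X + \Delta}_{H^d_\m(R)}$ under the twist by $f^{1/n}$ together with \cite[Theorem 6.21, Lemma 6.11]{MS} and Definition~\ref{BCM test ideal}. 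Getting the ceilings and the factor $1 - 1/m = (m-1)/m$ to land exactly right — so that the ``$-1$'' discrepancy shift that distinguishes klt from log canonical is reproduced — is the bookkeeping crux, but it is elementary once the twisting identity is in hand. Everything else is assembling Lemmas~\ref{tauB and birat}, \ref{restriction}, \ref{test module for ideals} and Remark~\ref{resolution}.
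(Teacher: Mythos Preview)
Your overall strategy---relate the pair $(X,A)$ to the pair $(X,\Delta)$ via a twisting identity involving $B = A - \Delta$---is exactly the right one, and matches the paper. But there is a genuine gap in your argument for (3), and the divisor bookkeeping in (1)--(2) can be done more cleanly.

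\textbf{The gap in (3).} You write that the twisting identity gives $I = L \otimes \mathrm{cl}(X,\Delta',\ba^\lambda)$ with $\mathrm{cl}(X,\Delta',\ba^\lambda) = R$, hence $I = L = \sO_X(-mB) \supseteq \bb$. This is only correct when $\bb = \sO_X(-mB)$. In general $\bb \subsetneq \sO_X(-mB)$ (and this generality is needed in the application to Theorem~\ref{mixed klt}, where one applies (3) on the divisor $Z$ with $\bb S$ in place of $\bb$). The correct twisting identity retains a residual ideal: setting $\q := \bb\,\sO_X(mB) \subseteq R$, one has
\[
I \;=\; \mathrm{cl}(X, A + (m-1)B, \ba^\lambda \q^{1-1/m}) \;=\; \sO_X(-mB)\cdot \mathrm{cl}(X,\Delta,\ba^\lambda \q^{1-1/m}),
\]
using that $A + (m-1)B = \Delta + mB$ and that $mB$ is Cartier. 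The remaining step is to show $\q \subseteq \mathrm{cl}(X,\Delta,\ba^\lambda\q^{1-1/m})$ under the regularity hypothesis. This is \emph{not} automatic: one uses that for each nonzero $f \in \q$ (noting $\lceil 1-1/m\rceil = 1$), the definition of $\mathrm{cl}$ as a sum gives $\mathrm{cl}(X,\Delta,\ba^\lambda\q^{1-1/m}) \supseteq \mathrm{cl}(X,\Delta+\Div_X(f),\ba^\lambda) = f\cdot\mathrm{cl}(X,\Delta,\ba^\lambda) = fR$, whence $\q \subseteq \mathrm{cl}(X,\Delta,\ba^\lambda\q^{1-1/m})$ and $I \supseteq \q\,\sO_X(-mB) = \bb$. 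Your version loses the $\q^{1-1/m}$ factor and therefore overclaims $I = \sO_X(-mB)$.

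\textbf{Cleaner bookkeeping for (1)--(2).} The identity you want is $A + (m-1)B = \Delta + mB$, with both sides effective; your route via $A - (m-1)B$ lands on a possibly non-effective divisor and complicates the argument. For (1), rather than working on an arbitrary birational model (where $\pi^*B$ is not defined since $B$ is not $\Q$-Cartier), it is simpler to use that $\sO_X(-mB)$ is reflexive and the complement of $U$ has codimension $\ge 2$, so it suffices to prove $I|_U \subseteq \sO_U(-mB|_U)$; on $U$ one has the chain
\[
I|_U \subseteq \mathcal{J}(U,A|_U,\ba|_U^\lambda\bb|_U^{1-1/m}) \subseteq \mathcal{J}(U,A|_U,\ba|_U^\lambda \sO_U(-mB|_U)^{1-1/m}) = \mathcal{J}(U,\Delta|_U,\ba|_U^\lambda)\otimes\sO_U(-mB|_U),
\]
using Lemma~\ref{tauB and birat} in case (a). Then (2) is immediate: the hypothesis $I|_U = \sO_U(-mB|_U)$ forces equality throughout the chain, giving $\mathcal{J}(U,\Delta|_U,\ba|_U^\lambda) = \sO_U$. (In case (b) the same chain runs with $\tau$ in place of $\mathcal{J}$.) Your separate derivation of (2) via an independent twisting identity is both redundant and, as written, again implicitly assumes $\bb|_U = \sO_U(-mB|_U)$.
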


\begin{proof}
First we consider the case $\textup{(a)}$. 
For (1), since $\sO_X(-mB)$ is reflexive and $U$ is an open subset of $X$ whose complement has codimension at least two, it is enough to show that $I|_U \subseteq \sO_U(-m B|_U)$.
Noting that $\sO_U(-m B|_U)$ is invertible, we have
\begin{align*}
I|_U &\subseteq \mathcal{J}(U, A|_U, \ba|_U^\lambda  \bb|_U^{1-1/m})\\
&\subseteq \mathcal{J}(U, A|_U, \ba|_U^\lambda \sO_U(-mB|_U)^{1-1/m})\\
&= \mathcal{J}(U, A|_U +\frac{m-1}{m} (mB|_U), \ba|_U^\lambda)\\
&= \mathcal{J}(U, \Delta|_U + m B|_U, \ba|_U^\lambda)\\
&= \mathcal{J}(U, \Delta|_U, \ba|_U ^\lambda) \otimes_{\sO_U} \sO_U(-m B|_U),
\end{align*}
where the first inclusion follows from Lemma \ref{tauB and birat} and the last equality  follows from essentially the same argument as the proof of \cite[Proposition 9.2.31]{Laz}.
Therefore, we have $I|_U \subseteq \sO_V(-m B|_U)$ as desired.

If the equality $I|_U = \sO_X(-mB)|_U$ holds, then we see from the above inclusions that $\mathcal{J}(U, \Delta|_U, \ba|_U^\lambda) = \sO_U$, which proves (2).

For (3), we set $\q : = \bb  \sO_X(mB) \subseteq R$.
Then 
\begin{align*}
I &= \tau_{\mathcal{B}}(X, A, \ba^\lambda \q^{1-1/m} \sO_X(-mB)^{1-1/m})\\
&= \tau_{\mathcal{B}}(X, A + (m-1)B, \ba^\lambda  \q^{1-1/m})\\
&= \tau_{\mathcal{B}}(X, \Delta, \ba^\lambda \q^{1-1/m}) \sO_X(-mB),
\end{align*}
where the last equality follows from \cite[Lemma 6.6]{MS}.
On the other hand, by Definition \ref{BCM test ideal} and \cite[Lemma 6.6]{MS}, 
\begin{align*}
\tau_{\mathcal{B}}(X, \Delta, \ba^\lambda  \q^{1-1/m}) &\supseteq \tau_{\mathcal{B}}(X, \Delta+ \Div_X(f), \ba^\lambda)\\
&= f \tau_{\mathcal{B}}(X, \Delta, \ba^\lambda)\\
&= f R 
\end{align*}
for all nonzero elements $f \in \q$, 
where the last equality follows from the assumption that $(X,\Delta, \ba^{\lambda})$ is BCM-regular.
Therefore, $I \supseteq \q \sO_X(-mB) = \bb$, which completes the proof of (3).

In the case (b) (resp.~(c)), the assertion follows from a similar argument by replacing \cite[Lemma 6.6]{MS} with \cite[p.402 Basic Properties (ii)]{Tak0} (resp.~\cite[Proposition 9.2.31]{Laz}).
\end{proof}

The following theorem, the main result of this section, should be compared with Theorem \ref{mixed lc}. 
\begin{thm}\label{mixed klt}
Suppose that $(R,\Delta, \ba^\lambda)$ is a triple and $h$ is a nonzero element in $R$ such that $S:= R/(h)$ is normal. 
We assume in addition that $Z:=\Spec S$ is not contained in the support of $\Delta$, $K_Z+\Delta|_Z$ is $\Q$-Cartier, and $\ba$ is not contained in the ideal $(h)$. 
Let $U \subseteq X$ be  
the locus where $K_X+\Delta$ is $\Q$-Cartier.  
\begin{enumerate}[label=\textup{(\arabic*)}]
\item 
Suppose that $(R,\m)$ is a complete local domain of mixed characteristic $(0, p)$ and $S$ is an $F$-finite local domain of characteristic $p>0$. 
If $(Z, \Delta|_Z, (\ba S)^\lambda)$ is strongly $F$-regular, then $(U,\Delta|_U, \ba|_U^\lambda)$ is klt.
\item
Suppose that $(R,\m)$ is an $F$-finite local domain of characteristic $p>0$. 
If $(Z, \Delta|_Z, (\ba S)^\lambda)$ is strongly $F$-regular, then so is $(U,\Delta|_U, \ba|_U^\lambda)$. 
\item $(cf.~$\cite{EV}, \cite{Chi}$)$ 
Suppose that $(R,\m)$ is a local ring of equal characteristic zero. 
If $(Z, \Delta|_Z, (\ba S)^\lambda)$ is klt, then so is $(U,\Delta|_U, \ba|_U^\lambda)$. 
\end{enumerate}
\end{thm}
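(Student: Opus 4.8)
The plan is to adapt the argument of Theorem \ref{mixed lc} to the ``ideal twist'' setting provided by Proposition \ref{ideal twists}, which is precisely the tool designed to handle the non-$\Q$-Gorenstein total space. First I would reduce to a local statement: after localizing and completing at a point of $U$, we may assume that $m(K_X+\Delta)$ is Cartier for a suitable $m$ (namely along $U$), so that we are in the situation where part (2) and (3) of Proposition \ref{ideal twists} apply. Choose an effective Weil divisor $A \sim -K_X$ with $B := A - \Delta$ effective, and let $I$ be the corresponding ideal ($\tau_{\mathcal{B}}$, $\tau$, or $\mathcal{J}$ according to the case). By Proposition \ref{ideal twists}(1) we always have $I \subseteq \sO_X(-mB)$, and by (2) it suffices to prove the reverse inclusion $I|_U = \sO_X(-mB)|_U$, equivalently (working at a fixed point and rescaling by a local generator) that the ``twisted'' test ideal $\tau_{\mathcal{B}}(X, \Delta, \ba^\lambda \q^{1-1/m})$ equals $R$ for $\q := \sO_X(-mB) \cdot \sO_X(mB)$, i.e. that $(X, \Delta, \ba^\lambda)$ is BCM-regular after the twist by $\bb$.

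The core of the proof will be a descending-induction argument exactly parallel to the proof of Theorem \ref{mixed lc}, but with log canonicity replaced by BCM-regularity (resp.\ strong $F$-regularity, klt-ness). Assuming for contradiction that $(U, \Delta|_U, \ba|_U^\lambda)$ is \emph{not} klt (resp.\ not strongly $F$-regular), I would want to extract, as in Lemma \ref{non-lc}, a descending chain of nonzero ideals $\bb_n \subseteq \sO_X(-mB)$ — chosen inside the twisting ideal rather than merely inside $R$ — together with a decreasing sequence $\varepsilon_n \searrow 0$ such that $\mathcal{J}(X, A, \ba^\lambda \bb_n^{1-\varepsilon_n}) \subseteq \bb_{n+1}$ and $\bigcap_n \bb_n = (0)$; the point is to run Lemma \ref{non-lc} on the triple $(X, A, \ba^\lambda \sO_X(-mB)^{1-1/m})$, whose failure of log canonicity is forced by the failure of klt-ness of $(U,\Delta|_U,\ba|_U^\lambda)$ via the last computation in the proof of Proposition \ref{ideal twists}(2). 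On the fiber side, strong $F$-regularity of $(Z, \Delta|_Z, (\ba S)^\lambda)$, via Lemma \ref{SFP and tau} applied to the corresponding twisted triple on $Z$, yields a nonzero ideal $J \subseteq S$ with $J \subseteq \tau(Z, \ldots, J^{1-\varepsilon})$ for all $\varepsilon \in (0,1]$. Then the chain of inclusions
\[
J \subseteq \tau\bigl(Z, \Delta|_Z, (\ba S)^\lambda (\bb_n S)^{1-\varepsilon_n}\bigr) \subseteq \tau_{\mathcal{B}}\bigl(X, \Delta, \ba^\lambda \bb_n^{1-\varepsilon_n}\bigr)S \subseteq \mathcal{J}\bigl(X, \Delta, \ba^\lambda \bb_n^{1-\varepsilon_n}\bigr)S \subseteq \bb_{n+1}S,
\]
using Lemma \ref{restriction}, Lemma \ref{tauB and birat}, and the analogue of Lemma \ref{non-lc}(i), gives by induction $J \subseteq \bigcap_n \bb_n S = (0)$, a contradiction. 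The cases (2) and (3) run identically with $\tau_{\mathcal{B}}$ replaced by $\tau$ (using \cite{Tak0}) or by $\mathcal{J}$ (using \cite{Laz}), and in the klt characteristic-zero case (3) one additionally uses Remark \ref{resolution} to pass freely between the multiplier-ideal and discrepancy formulations; strong $F$-regularity of $(Z,\ldots)$ is replaced by sharp $F$-purity / klt-ness of the fiber as appropriate, so that Lemma \ref{SFP and tau} (or its multiplier-ideal analogue) still applies.

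The main obstacle I anticipate is bookkeeping the twisting ideal $\bb = \sO_X(-mB)$ consistently through both Lemma \ref{non-lc} and Lemma \ref{SFP and tau}: the former produces a chain inside $R$, and one must verify it can be taken inside $\sO_X(-mB)$ (e.g.\ by intersecting with $\sO_X(-mB)$ or by choosing the auxiliary divisor $E$ on $Y$ appropriately), while the latter, applied on $Z$, must be fed the \emph{restricted} twist $\bb S = \sO_Z(-mB|_Z)$ and one needs $B|_Z$ to make sense and the relevant Cartier-index hypothesis (``$m(K_X+\Delta)$ Cartier along $U$'', hence $K_Z + \Delta|_Z$ $\Q$-Cartier) to be preserved under restriction — this is where the hypothesis that $K_Z + \Delta|_Z$ is $\Q$-Cartier enters. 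A secondary subtlety, handled as in Corollary \ref{mixed lc local}, is that one proves the statement after completion and then descends klt-ness / strong $F$-regularity back to $U$ via Lemma \ref{regular map and lc} (and the fact that strong $F$-regularity and klt-ness are detected on completions); this requires checking that the regular sequence defining $S$ over $R$ completes to a regular sequence, which holds since $h$ is a nonzerodivisor and completion is flat.
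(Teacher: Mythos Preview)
Your proposal takes a genuinely different route from the paper, and in doing so introduces a real gap.

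The paper does \emph{not} run the iterative descending-chain argument of Theorem~\ref{mixed lc}. Instead, it proves $I = \bb := \sO_X(-mB)$ in one shot by a Nakayama argument: apply Proposition~\ref{ideal twists}(3) to the \emph{fiber} $Z$ (using that $(Z,\Delta|_Z,(\ba S)^\lambda)$ is strongly $F$-regular and $m(K_Z+\Delta|_Z)$ is Cartier) to get $\bb S \subseteq J$; apply the restriction theorem (Lemma~\ref{restriction}, or its analogues) to get $J \subseteq IS$; combine with $I \subseteq \bb$ from Proposition~\ref{ideal twists}(1). Since $Z=\Div_X(h)$ is prime and not a component of $B$, one has $\bb \cap (h) = h\bb \subseteq \m\bb$, so $\bb \subseteq I + \m\bb$ and Nakayama gives $I = \bb$; then Proposition~\ref{ideal twists}(2) finishes. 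This is the Esnault--Viehweg mechanism signalled by the citation of \cite{EV}, \cite{Chi} in part~(3), not the Kawakita mechanism of Theorem~\ref{mixed lc}.

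The gap in your approach is the step ``failure of klt-ness of $(U,\Delta|_U,\ba|_U^\lambda)$ forces failure of log canonicity of $(X,A,\ba^\lambda\bb^{1-1/m})$.'' The computation in Proposition~\ref{ideal twists} only shows that on $U$ the multiplier ideal of the twisted triple equals $\mathcal{J}(U,\Delta|_U,\ba|_U^\lambda)\otimes\sO_U(-mB|_U)$; if $(U,\Delta|_U,\ba|_U^\lambda)$ is not klt you learn this is strictly inside $\sO_U(-mB|_U)$, i.e.\ the twisted triple is \emph{not klt}. But Lemma~\ref{non-lc} requires \emph{not log canonical}: its proof needs a divisor $E$ with $\ord_E(\Delta_Y+\lambda F)>1$ so that the sequence $\varepsilon_n = (\ord_E(\cdots)-1)/n$ is strictly positive and the chain $\bb_n$ strictly descends. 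If the offending $E$ has discrepancy exactly $-1$ and $\ord_E(\pi^*B)=0$, the twist by $(m-1)B$ does nothing at $E$, the twisted triple is still log canonical there, and your chain collapses. No amount of bookkeeping with $\sO_X(-mB)$ fixes this; the iterative scheme is simply too coarse to distinguish klt from lc. The paper's Nakayama step sidesteps the issue because Proposition~\ref{ideal twists}(3), applied on $Z$, uses strong $F$-regularity (resp.\ BCM-regularity, klt) directly to get the \emph{full} containment $\bb S \subseteq J$, rather than merely a nonzero ideal as in Lemma~\ref{SFP and tau}.
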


\begin{proof}
(1) Since $X$ is affine and Gorenstein at the generic point of $Z$, we can take an effective Weil divisor $A$ on $X$ linearly equivalent to $-K_X$ such that $B:= A -\Delta$ is effective and $\Supp A$ does not contain $Z$.
Take an integer $m \ge 1$ such that $m (K_Z + \Delta|_Z)$ is Cartier.
Set
\begin{align*}
I &: = \tau_{\mathcal{B}}(X, A, \ba^\lambda \bb^{1-1/m}) \subseteq R, \\ 
J &:= \tau(Z, A|_Z, (\ba S)^\lambda (\bb S)^{1-1/m}) \subseteq S,
\end{align*}
where we write  $\bb : = \sO_X(-mB) \subseteq R$.

Since $A|_Z$ is linearly equivalent to $-K_Z$, $B|_Z=A|_Z - \Delta|_Z$ and $\bb S \subseteq \sO_Z(-m B|_Z)$, 
we apply Proposition \ref{ideal twists} (3) with $X=Z$ and Lemma \ref{restriction} to deduce that $\bb S \subseteq J \subseteq IS$.
It follows from a combination of the inclusion $\bb S \subseteq IS$ with Proposition \ref{ideal twists} (1) that 
\[
I \subseteq \bb \subseteq I + \bb \cap (h).
\]
By assumption, $\Div_X(h)=Z$ is a prime divisor on $X$, which is not an irreducible component of $B$. Thus, $\bb \cap (h)=h(\bb:_R(h))=h \bb \subseteq \m \bb$, so that $\bb=I+\m \bb$. 
By Nakayama's lemma, we have $I=\bb$, which implies the assertion by Proposition \ref{ideal twists} (2). 

For (2), we set
\begin{align*}
I &: = \tau(X, A, \ba^\lambda \bb^{1-1/m}) \subseteq R,  \\ 
J &:= \tau(Z, A|_Z, (\ba S)^\lambda (\bb S)^{1-1/m}) \subseteq S.
\end{align*}
The proof then follows from an argument similar to the proof of (1), by replacing Lemma \ref{restriction} with \cite[Theorem 6.10 (1)]{HY}.

For (3), we set
\begin{align*}
I &: = \mathcal{J}(X, A, \ba^\lambda \bb^{1-1/m}) \subseteq R  \textup{ and}\\ 
J &:= \mathcal{J}(Z, A|_Z, (\ba S)^\lambda (\bb S)^{1-1/m}) \subseteq S.
\end{align*}
The proof then follows from an argument similar to the proof of (1) by replacing Lemma \ref{restriction} with \cite[Theorem 9.5.13]{Laz}.\footnote{\cite[Theorem 9.5.13]{Laz} 
is formulated for varieties, but the same statement for excellent $\mathbb{Q}$-schemes is obtained by using \cite[Theorem A]{Mur} instead of the local vanishing theorem.}
\end{proof}

\begin{cor}\label{mixed klt local}
With notation as in Setting \ref{local setting}, let $x \in X$ be a closed point and $\mathcal{Z} \subseteq \mathcal{X}$ be an irreducible closed subset which dominates $T$ and contains $x$.
Let $y$ be the generic point of $\mathcal{Z}$, which lies in the generic fiber $\mathcal{X}_\eta$.  
We further assume that the following conditions are all satisfied.  
\begin{enumerate}[label=\textup{(\roman*)}]
\item $T$ is a Dedekind scheme, that is, $\dim T=1$. 
\item One of the following holds.  
\begin{enumerate}[label=\textup{(\alph*)}]
\item $K_{X}+\mathcal{D}|_X$ is $\Q$-Cartier at $x$ and $K_{\mathcal{X}_{\eta}}+\mathcal{D}_{\eta}$ is $\Q$-Cartier at $y$, or 
\item $\dim \sO_{X,x} \le 2$. 
\end{enumerate}
\item One of the following cases occurs.
\begin{enumerate}[label=\textup{(\alph*)}]
\item $\sO_{T,t}$ is of mixed characteristic $(0,p)$, the residue field $\kappa(t)$ is $F$-finite of characteristic $p$, and $(X, \mathcal{D}|_X, (\ba \sO_X)^\lambda)$ is strongly $F$-regular at $x$, 
\item $\sO_{T,t}$ is $F$-finite of characteristic $p>0$ and $(X, \mathcal{D}|_X, (\ba \sO_X)^\lambda)$ is strongly $F$-regular at $x$, or
\item $\sO_{T,t}$ is of equal characteristic zero and $(X, \mathcal{D}|_X, (\ba \sO_X)^\lambda)$ is klt at $x$.
\end{enumerate}
\end{enumerate}
Then $(\mathcal{X}_\eta, \mathcal{D}_\eta, \ba_\eta^\lambda)$ is klt at $y$ in the case $\textup{(iii-a)}$ or $\textup{(iii-c)}$ and is strongly $F$-regular at $y$ in the case $\textup{(iii-b)}$. 
\end{cor}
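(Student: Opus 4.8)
The plan is to localize the assertion at $x$, pass to the completion of $\sO_{\mathcal{X},x}$ when the residue characteristic at $t$ is mixed, apply Theorem~\ref{mixed klt}, and recover the conclusion at $y$ by descent along the flat maps involved. Concretely: since $T$ is a Dedekind scheme (hypothesis~(i)), $\sO_{T,t}$ is a discrete valuation ring; let $h\in\sO_{\mathcal{X},x}$ be a uniformizer, so that $h$ is a nonzerodivisor and $\sO_{\mathcal{X},x}/(h)=\sO_{X,x}$ by flatness of $\mathcal{X}$ over $T$. Put $R:=\sO_{\mathcal{X},x}$ in cases~(iii-b),~(iii-c) and $R:=\widehat{\sO_{\mathcal{X},x}}$ in case~(iii-a); let $\Delta$ be the flat pull-back of $\mathcal{D}$ to $\Spec R$, set $S:=R/(h)$ (so $S=\sO_{X,x}$, resp.\ $\widehat{\sO_{X,x}}$) and $Z:=\Spec S$. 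The structural requirements of Theorem~\ref{mixed klt} on $R$ and $S$ (normality, being a domain, $F$-finiteness, the prescribed characteristic type) all hold, because in the situation of Setting~\ref{local setting} they pass to localizations and, $\mathcal{X}$ being excellent, to completions; moreover $Z\not\subseteq\Supp\Delta$ (the generic point of $Z$ maps to the generic point of $X$, which avoids $\Supp\mathcal{D}$), $\ba\not\subseteq(h)$ (as $\ba S=\ba\sO_{X,x}\ne 0$), and $(Z,\Delta|_Z,(\ba S)^\lambda)$ inherits strong $F$-regularity, resp.\ klt-ness, from $(X,\mathcal{D}|_X,(\ba\sO_X)^\lambda)$ at $x$.

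Next I would dispose of the $\Q$-Gorenstein hypotheses. Theorem~\ref{mixed klt} requires $K_Z+\Delta|_Z$ to be $\Q$-Cartier: under~(ii-a) this is (the completion of) the given condition at $x$; under~(ii-b), $\dim\sO_{X,x}\le 2$ and the strong $F$-regularity or klt-ness of $(X,\mathcal{D}|_X,(\ba\sO_X)^\lambda)$ at $x$ makes $\sO_{X,x}$ pseudo-rational at $x$ (Remark~\ref{SFR to F-rat}, resp.\ Remark~\ref{remark rat}(iii)), hence $\Q$-factorial at $x$ by Remark~\ref{remark rat}(ii), so again $K_Z+\Delta|_Z$ is $\Q$-Cartier. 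I also need $K_{\mathcal{X}_\eta}+\mathcal{D}_\eta$ to be $\Q$-Cartier at $y$: under~(ii-a) it is assumed; under~(ii-b), since $\dim\sO_{\mathcal{X}_\eta,y}=\dim\sO_{X,x}+1-\dim\mathcal{Z}\le 2$, it is automatic when this number is $\le 1$ (then $\mathcal{X}_\eta$ is regular at $y$ and the assertion is trivial), and when it equals $2$ one deduces it from the fact that pseudo-rationality is inherited by $\mathcal{X}$ from its Cartier divisor $X$ and propagates to the generization $y$ (in the mixed characteristic case through the BCM-theoretic analogue of $F$-rationality, $\mathcal{X}_\eta$ being of equal characteristic zero), so that $\mathcal{X}_\eta$ is pseudo-rational, hence $\Q$-factorial, at $y$ by Remark~\ref{remark rat}(ii).

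Then I would apply Theorem~\ref{mixed klt} --- its parts~(1),~(2),~(3) in cases~(iii-a),~(iii-b),~(iii-c) --- to $(R,\Delta,(\ba R)^\lambda)$ and $h$, obtaining that $(U,\Delta|_U,(\ba R)|_U^\lambda)$ is klt (cases~(iii-a),~(iii-c)), resp.\ strongly $F$-regular (case~(iii-b)), where $U\subseteq\Spec R$ is the locus on which $K_R+\Delta$ is $\Q$-Cartier. In cases~(iii-b),~(iii-c), where $R=\sO_{\mathcal{X},x}$, the prime $\p_y$ of $R$ corresponding to $y$ lies in $U$ by the previous paragraph, so $(\mathcal{X},\mathcal{D},\ba^\lambda)$ is klt, resp.\ strongly $F$-regular, at $y$; as $\sO_{\mathcal{X}_\eta,y}=\sO_{\mathcal{X},y}$ with the same divisor and ideal, this is the claim. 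In case~(iii-a) I would take a minimal prime $\q$ of $\p_y\widehat{\sO_{\mathcal{X},x}}$; then $\sO_{\mathcal{X}_\eta,y}=\sO_{\mathcal{X},y}\to R_{\q}$ is flat local with $\m_y R_{\q}=\m_{\q}$ and $\kappa(\q)$ separable over $\kappa(y)$ (the formal fibers of the excellent ring $\sO_{\mathcal{Z},x}$ being geometrically regular), and since $K_{\mathcal{X}_\eta}+\mathcal{D}_\eta$ is $\Q$-Cartier at $y$, faithful flatness gives $\q\in U$; thus $(\Spec R,\Delta,(\ba R)^\lambda)$ is klt at $\q$, and Lemma~\ref{regular map and lc}(2) yields that $(\mathcal{X}_\eta,\mathcal{D}_\eta,\ba_\eta^\lambda)$ is klt at $y$.

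The main obstacle, as I see it, is case~(ii-b): one has to recognize that the bound on $\dim\sO_{X,x}$ forces both the closed fiber at $x$ and the generic fiber at $y$ to be $\Q$-factorial, so that a possibly non-$\Q$-Gorenstein total space causes no harm and the situation is effectively reduced to case~(ii-a) --- the one external ingredient being the deformation-invariance of (pseudo-)rational singularities (through their BCM counterpart in mixed characteristic). A secondary subtlety is the descent in the mixed characteristic case, which is why one localizes the completion at a minimal prime over $\p_y$ in order to invoke Lemma~\ref{regular map and lc} at the non-closed point $y$.
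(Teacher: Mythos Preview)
Your proof is correct and follows essentially the same strategy as the paper: reduce case~(ii-b) to case~(ii-a) by invoking deformation of rationality-type singularities (the paper cites \cite[Theorem 3.8]{MS}, \cite[Theorem 4.2(h)]{HH2}, and \cite{Elk} in the three cases) together with $\Q$-factoriality of two-dimensional pseudo-rational rings, then localize (or complete in the mixed characteristic case) at $x$, apply Theorem~\ref{mixed klt}, and descend to $y$ via Lemma~\ref{regular map and lc}. Your handling of the descent step---choosing a minimal prime $\q$ of $\p_y\widehat{\sO_{\mathcal{X},x}}$ so that $\m_yR_\q=\m_\q$ and $\kappa(\q)/\kappa(y)$ is separable---is in fact more explicit than the paper's terse ``there exists $y'$ with $f(y')=y$''.
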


\begin{proof}
First, we assume that (ii-a) holds. 
For the case (iii-a), $R:=\widehat{\sO_{\mathcal{X}, x}}$ denotes the completion of the stalk $\sO_{\mathcal{X},x}$ and $S:=\widehat{\sO_{X, x}}$ denotes the completion of $\sO_{X,x}$.
Note that $S$ is $F$-finite, because $\sO_{X,x}$ is essentially of finite type over an $F$-finite field $\kappa(t)$. 
Since the morphism $\Spec R \to \Spec \sO_{\mathcal{X}, x}$ induced by the completion $\sO_{\mathcal{X},x} \to R$ is surjective and $y$ is a generalization of $x$, there exists a point $y' \in \Spec R$ such that $f(y')=y$, where $f: \Spec R \to \mathcal{X}$ is the canonical morphism. 
$f^*\Delta$ denotes the flat pullback of $\Delta$ by $f$, and then $K_{\Spec R} + f^* \Delta$ is $\Q$-Cartier at $y'$ by Lemma \ref{regular map and lc}. 
We see from the fact that $T$ is a Dedekind scheme and $\mathcal{X}$ is flat over $T$ that the kernel of the surjection $R \to S$ is a principal ideal. 
Since strong $F$-regularity is preserved under completion, we apply Theorem \ref{mixed klt} to deduce that $(\Spec R, f^*\Delta, (\ba R)^\lambda)$ is klt at $y'$.
Therefore, by Lemma \ref{regular map and lc} again, $(\mathcal{X}, \mathcal{D}, \ba^\lambda)$ is klt at $y$. 
The cases (iii-b) and (iii-c) follow similarly, by replacing $R$ with $\sO_{\mathcal{X},x}$ and $S$ with $\sO_{X,x}$.

Next, we assume that (ii-b) holds. 
It suffices to show that the condition (iii) implies the condition (ii-a). 
First we consider the case (iii-a). 
The closed fiber $X$ is $F$-rational at $x$ by Remark \ref{SFR to F-rat}.  
Therefore, the log $\Q$-Gorensteinness of $(X,\mathcal{D}|_X)$ is an immediate consequence of Remark \ref{remark rat} (ii). 
Also, it follows from \cite[Theorem 3.8]{MS} that $\mathcal{X}$ is pseudo-rational at $x$ and, in particular, at $y$, because $y$ is a generalization of $x$. 
Thus, $\mathcal{X}_{\eta}$ is pseudo-rational at $y$, and by Remark \ref{remark rat} (ii) again, $K_{\mathcal{X}_{\eta}}+\mathcal{D}_{\eta}$ is $\Q$-Cartier at $y$. 
The case (iii-b) follows similarly, by replacing \cite[Theorem 3.8]{MS} with \cite[Theorem 4.2 (h)]{HH2} and Remark \ref{SFR to F-rat}. 
The case (iii-c) does, by replacing Remark \ref{SFR to F-rat} with Remark \ref{remark rat} (iii) and \cite[Theorem 3.8]{MS} with \cite{Elk}.
\end{proof}

In Corollary \ref{mixed klt local}, the log $\Q$-Gorenstein assumption on the generic fiber is essential. 

\begin{eg}[$\textup{\cite[Theorem 1.1]{Sin}, cf. \cite[Remark 6.5]{DSS}}$]\label{counter eg local}
  We give an example of 
  \[(\mathcal{X}, \mathcal{D}, \ba^\lambda, \mathcal{Z}, T, x, y, t)\] 
  which satisfies all the assumptions, except for the condition that $K_{\mathcal{X}_{\eta}}+\mathcal{D}_{\eta}$ is $\Q$-Cartier at $y$, of Corollary \ref{mixed klt local} and where $(\mathcal{X}_{\eta}, \mathcal{D}_{\eta}, \ba^{\lambda}_{\eta})$ is not klt at $y$ in the sense of de Fernex-Hacon (cf.~\cite[Section 7]{dFH}). 
  The latter condition means that there does not exist an effective $\Q$-Weil divisor $\Delta$ on $\mathcal{X}_{\eta}$ such that $K_{\mathcal{X}_{\eta}} + \mathcal{D}_{\eta}+\Delta$ is $\Q$-Cartier at $y$ and $(\mathcal{X}_{\eta}, \mathcal{D}_{\eta} + \Delta, \ba^{\lambda}_{\eta})$ is klt at $y$.
  
  Fix integers $m, n \ge 1 $ such that $m-m/n >2$.
  Let $I$ be the ideal of a polynomial ring $\Z[A,B,C,D,E]$ generated by the size two minors of the matrix
  \[
  \left(
  \begin{array}{ccc}
  A^2+(3E)^m & B & D \\
  C & A^2 & B^n -D
  \end{array}
  \right)
  \]
  and we write $R: =\Z[A,B,C, D, E]/I$.
  We set 
  \[
    \mathcal{X}:= \Spec R, \; 
    \mathcal{D}:= 0, \;
    \ba:= \sO_{\mathcal{X}}, \;
    \lambda:=1, \;
    T:= \Spec \Z, 
  \]
and let $f:\mathcal{X} \to T$ be a canonical morphism and  $\mathcal{Z}$ be the closed subscheme of $\mathcal{X}$ defined by the ideal $(A,B,C,D,E) \subseteq R$. 
Then all the fibers of $f|_{\mathcal{Z}}:\mathcal{Z} \to T$ are singletons. 
We choose $t \in T$ as the closed point corresponding to the prime number 3, and let $x \in \mathcal{X}$ be 
the unique element of the fiber $\mathcal{Z}_t$ over $t$ and $y \in \mathcal{X}$ be  
the unique element of the generic fiber $\mathcal{Z}_\eta$. 
  
First we note that $X := \mathcal{X}_t = \Spec R/(3) = \Spec S \times_{\F_3} \mathbb{A}^1_{\F_3}$, where $S$ is the quotient ring of a polynomial ring $\F_3[A, B, C, D]$ by the ideal generated by the size two minors of the matrix
  \[
  \left(
  \begin{array}{ccc}
  A^2 & B & D \\
  C & A^2 & B^n -D
  \end{array}
  \right).
  \] 
Since $S$ is strongly $F$-regular and $\Q$-Gorenstein by the proof of \cite[Proposition 4.3]{Sin}, so is $X$.
  
Next we show that $\mathcal{X}_{\eta}$ is not klt at $y$ in the sense of de Fernex-Hacon.
Assume to the contrary that there exists an effective $\Q$-Weil divisor $\Delta$ on $\mathcal{X}_{\eta}$ such that $K_{\mathcal{X}_{\eta}} + \Delta$ is $\Q$-Cartier and $(\mathcal{X}_{\eta},  \Delta)$ is klt at $y$. 
It then follows from \cite[Corollary 3.4]{Tak0} that for general prime numbers $p$, the fiber $\mathcal{X}_p$ of $f$ over the closed point $(p) \in T$ is strongly $F$-regular at $x_p$, where $x_p \in \mathcal{X}$ is the unique element of the fiber $\mathcal{Z}_p$ over $(p)$.  
However, by \cite[Theorem 1.1]{Sin}, $\mathcal{X}_p$ is not strongly $F$-regular at $x_p$ if $p>3$, which is a contradiction.
Therefore, $\mathcal{X}_{\eta}$ is not klt in the sense of de Fernex-Hacon, as desired.
\end{eg}

In order to discuss singularities of geometric generic fibers, we introduce the notion of geometrically strongly $F$-regular triples. 
\begin{defn}\label{defn GSFR}
Suppose that $X$ is a scheme essentially of finite type over an $F$-finite field $k$ of characteristic $p>0$ and $\Delta$ is an effective $\Q$-Weil divisor on $X$. 
Let $\ba \subseteq \sO_X$ be a nonzero coherent ideal sheaf and $\lambda>0$ be a real number. 
$X_l:= X \times_{\Spec k} \Spec l$ 
denotes the base change of $X$ to the perfect closure $l:= k^{1/p^{\infty}}$ of $k$ and 
$\Delta_l$ (resp. $\ba_l$) denotes the flat pullback of $\Delta$ (resp. $\ba$) to $X_l$. 
  
\begin{enumerate}[label=(\roman*)]
\item We say that $(X, \Delta, \ba^\lambda)$ is \textit{geometrically strongly $F$-regular} over $k$ at a point $x \in X$ if $(X_l, \Delta_l, \ba_l^\lambda)$ is strongly $F$-regular at the (unique) point $x_l \in X_l$ lying over $x$.
  
\item We say that $(X, \Delta, \ba^\lambda)$ is \emph{geometrically strongly $F$-regular} over $k$ if it is geometrically strongly $F$-regular at every point $x \in X$.
\end{enumerate}
\end{defn}

Using the theory of relative test ideals (see Appendix \ref{relative test ideals}), we establish the following equivalent criteria for a triple to be geometrically strongly $F$-regular.  
\begin{prop}\label{GSFR}
  Suppose that $X$ is a geometrically normal scheme essentially of finite type over an $F$-finite field $k$ of characteristic $p>0$ and $\Delta$ is an effective $\Q$-Weil divisor on $X$ such that $K_X+\Delta$ is $\Q$-Cartier with index not divisible by $p$. 
  Let $\ba \subseteq \sO_X$ be a nonzero coherent ideal sheaf and  $\lambda$ be a rational number. 
  For a field extension $k \subseteq l$, $X_l$ denotes the fiber product $X \times_{\Spec k} \Spec l$ and $\Delta_l$ (resp.~$\ba_l$) denotes the flat pullback of $\Delta$ (resp.~$\ba$) to $X_l$. 
  Given a point $x \in X$, the following conditions are equivalent to each other.
  \begin{enumerate}[label=\textup{(\roman*)}]
  \item $(X, \Delta, \ba^\lambda)$ is geometrically strongly $F$-regular over $k$ at $x$.
  \item For any perfect field $l \supseteq k$, $(X_l, \Delta_l, \ba_l^\lambda)$ is strongly $F$-regular at every point of $X_l$ lying over $x$.
  \item For any $F$-finite field $l \supseteq k$, $(X_l, \Delta_l, \ba_l^\lambda)$ is strongly $F$-regular at every point of $X_l$ lying over $x$.
  \item For a sufficiently divisible integer $n \ge 1$, $(X_{k^{1/p^n}}, \Delta_{k^{1/p^n}}, \ba_{k^{1/p^n}}^\lambda)$ is strongly $F$-regular at a (unique) point $X_{k^{1/p^n}}$ lying over $x$.
  \end{enumerate}
  \end{prop}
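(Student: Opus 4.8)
The plan is to encode strong $F$-regularity of each field base change $(X_l,\Delta_l,\ba_l^\lambda)$ as the triviality of its test ideal (Definition~\ref{test ideal def}), and then to compare these test ideals using the base-change properties of the relative test ideal $\tau_{\mathrm{rel}}$ built in Appendix~\ref{relative test ideals}. Write $l_\infty:=k^{1/p^\infty}$, and abbreviate $\tau_l:=\tau(X_l,\Delta_l,\ba_l^\lambda)$ and $\tau^{\mathrm{rel}}_l:=\tau_{\mathrm{rel}}(X_l/l,\Delta_l,\ba_l^\lambda)$. First I would collect the standing facts. Since $X$ is geometrically normal and $K_X+\Delta$ is $\Q$-Cartier of index prime to $p$, for every $F$-finite field extension $l\supseteq k$ the scheme $X_l$ is normal, essentially of finite type over the $F$-finite field $l$, with $K_{X_l}+\Delta_l$ still $\Q$-Cartier of index prime to $p$; hence $(X_l,\Delta_l,\ba_l^\lambda)$ is strongly $F$-regular at a point $y$ if and only if $(\tau_l)_y=\sO_{X_l,y}$ (see \cite{BSTZ}). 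Moreover $k^{1/p^n}\supseteq k$ is purely inseparable, so $X_{k^{1/p^n}}\to X$ is a universal homeomorphism and there is a unique point $x_n$ over $x$, and likewise a unique point $x_\infty$ of $X_{l_\infty}$ over $x$; and for $F$-finite $l\supseteq k$ and $y\in X_l$ over $x$ the local homomorphism $\sO_{X,x}\to\sO_{X_l,y}$ is faithfully flat, so an ideal of $\sO_{X,x}$ is the unit ideal precisely when its extension to $\sO_{X_l,y}$ is. From the appendix I will use three properties of $\tau_{\mathrm{rel}}$: (A) base change: $\tau^{\mathrm{rel}}_l=\tau^{\mathrm{rel}}_k\,\sO_{X_l}$ for $F$-finite $l\supseteq k$, together with $\tau_{l_\infty}=\tau^{\mathrm{rel}}_k\,\sO_{X_{l_\infty}}$ (relative and absolute test ideals coincide over the perfect field $l_\infty$); (B) domination: $\tau^{\mathrm{rel}}_l\subseteq\tau_l$ for $F$-finite $l$; (C) stabilization: $\tau_{l_\infty}=\tau_{k^{1/p^n}}\,\sO_{X_{l_\infty}}$ for all $n\gg0$.

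The pivotal step is the reformulation that (i) holds if and only if $(\tau^{\mathrm{rel}}_k)_x=\sO_{X,x}$. Indeed, (i) says $(X_{l_\infty},\Delta_{l_\infty},\ba_{l_\infty}^\lambda)$ is strongly $F$-regular at $x_\infty$, that is, $(\tau_{l_\infty})_{x_\infty}=\sO_{X_{l_\infty},x_\infty}$; by (A) this reads $(\tau^{\mathrm{rel}}_k\,\sO_{X_{l_\infty}})_{x_\infty}=\sO_{X_{l_\infty},x_\infty}$, which by faithful flatness of $\sO_{X,x}\to\sO_{X_{l_\infty},x_\infty}$ is equivalent to $(\tau^{\mathrm{rel}}_k)_x=\sO_{X,x}$. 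With this in hand, (i) $\Rightarrow$ (iii): for $F$-finite $l\supseteq k$ and $y\in X_l$ over $x$, property (A) gives $(\tau^{\mathrm{rel}}_l)_y=(\tau^{\mathrm{rel}}_k\,\sO_{X_l})_y=\sO_{X_l,y}$, hence $(\tau_l)_y=\sO_{X_l,y}$ by (B), so $(X_l,\Delta_l,\ba_l^\lambda)$ is strongly $F$-regular at $y$. The implications (iii) $\Rightarrow$ (ii) $\Rightarrow$ (i) and (iii) $\Rightarrow$ (iv) are then formal, since every perfect field is $F$-finite, $l_\infty$ is perfect, and $k^{1/p^n}$ is $F$-finite.

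It remains to prove (iv) $\Rightarrow$ (i), which I expect to be the main obstacle: it is the only step using the internal structure of the relative test ideal, namely the stabilization (C), rather than mere functoriality. Assuming (iv), choose an integer $n$ that is both sufficiently divisible for (iv) to apply and large enough for (C) to hold; then $(\tau_{k^{1/p^n}})_{x_n}=\sO_{X_{k^{1/p^n}},x_n}$, so extending along the local homomorphism $\sO_{X_{k^{1/p^n}},x_n}\to\sO_{X_{l_\infty},x_\infty}$ and invoking (C) yields $(\tau_{l_\infty})_{x_\infty}=\sO_{X_{l_\infty},x_\infty}$, that is, $(X_{l_\infty},\Delta_{l_\infty},\ba_{l_\infty}^\lambda)$ is strongly $F$-regular at $x_\infty$; this is (i). Combining the cycles (i) $\Rightarrow$ (iii) $\Rightarrow$ (ii) $\Rightarrow$ (i) and (iii) $\Rightarrow$ (iv) $\Rightarrow$ (i) proves the four conditions equivalent. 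The real content, and the place where the theory of relative test ideals is indispensable, is property (C): the absolute test ideals along the perfect-closure tower $k\subseteq k^{1/p}\subseteq\cdots$ eventually stabilize after extension to $l_\infty$; the rest is bookkeeping with base change and faithful flatness.
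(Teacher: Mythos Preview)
Your treatment of the equivalence of (i), (ii), and (iv) is essentially the paper's: all three follow directly from Theorem~\ref{absolute tau vs fiber} applied with $V=\Spec k$, which yields
\[
\tau(X_{k^{1/p^n}},\Delta_{k^{1/p^n}},\ba_{k^{1/p^n}}^\lambda)\,\sO_{X_l}=\tau(X_l,\Delta_l,\ba_l^\lambda)
\]
for every perfect field $l\supseteq k$ and every sufficiently divisible $n$. Your property (C) and the second clause of (A) are both instances of this formula, and the pivotal reformulation of (i) is immediate from it.

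The gap is in your route to (iii). The appendix never produces a ``relative test ideal'' $\tau^{\mathrm{rel}}_k$ as an ideal sheaf on $X$ (or $\tau^{\mathrm{rel}}_l$ on $X_l$): the object $\tau_n(X/V,\phi I,\ba^\lambda)$ lives in $B_n=\sO_{X_{k^{1/q^n}}}$, and Proposition~\ref{tau vs fiber} identifies the stable value with the \emph{absolute} test ideal of $X_{k^{1/q^n}}$. Under that identification your inclusion (B) becomes $\tau(X_{l^{1/q^n}},\ldots)\subseteq\tau(X_l,\ldots)\,\sO_{X_{l^{1/q^n}}}$, which is the wrong direction: along the flat map $X_{l^{1/q^n}}\to X_l$ one has $\tau_l\,\sO_{X_{l^{1/q^n}}}\subseteq\tau(X_{l^{1/q^n}},\ldots)$, not the reverse, so triviality of the larger ideal does not force triviality of $\tau_l$. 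Likewise the first clause of (A) cannot be read as an equality inside $\sO_{X_l}$, since there is in general no ring map $\sO_{X_{k^{1/q^n}}}\to\sO_{X_l}$. Thus your deduction (i)$\Rightarrow$(iii) is not justified by the appendix.

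The paper closes the cycle differently: it proves (ii)$\Rightarrow$(iii) by passing, for a given $F$-finite $l$, to its perfect closure $l'$; condition (ii) gives strong $F$-regularity on $X_{l'}$, and since $X_{l'}\to X_l$ is faithfully flat, strong $F$-regularity descends to $X_l$ by an argument as in \cite[Theorem~3.1]{HH}. This descent step is exactly the content your (B) was meant to capture, but it is established directly from the splitting definition of strong $F$-regularity rather than via any comparison of test ideals.
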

  
  \begin{proof}
  The conditions (i), (ii), and (iv) are equivalent by Theorem \ref{absolute tau vs fiber}. 
  Since the implication (iii)$\Rightarrow$(iv) is obvious, we will show that (ii) implies (iii). 
  
  Given an $F$-finite field $l \supseteq k$, let $l'$ be the perfect closure of $l$. 
  By (ii), $(X_{l'}, \Delta_{l'}, \ba_{l'}^\lambda)$ is strongly $F$-regular at every point of $X_{l'}$ lying over $x$. 
  Applying an argument similar to the proof of \cite[Theorem 3.1]{HH} to the faithfully flat morphism $X_{l'} \to X_l$, we see that $(X_l, \Delta_l, \ba_l^\lambda)$ is strongly $F$-regular at every point of $X_l$ lying over $x$.
  \end{proof}

\begin{thm}\label{deformation GSFR}
With notation as in Setting \ref{local setting}, assume that the conditions $\textup{(i)}$, $\textup{(ii)}$, and $\textup{(iii-b)}$ in Corollary \ref{mixed klt local} hold. 
If the residue field $\kappa(t)$ is perfect, 
then $(\mathcal{X}_\eta, \mathcal{D}_\eta, \ba_\eta^\lambda)$ is geometrically strongly $F$-regular over the function field $\kappa(\eta)$ at $y$.
\end{thm}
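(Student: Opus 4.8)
The plan is to deduce the geometric statement from its non-geometric counterpart, Corollary \ref{mixed klt local} (iii-b), by base changing the whole family $\mathcal{X}\to T$ along the iterates of the absolute Frobenius of $T$, and then passing to the limit with Proposition \ref{GSFR}.

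Before the main argument I would record a few reductions. Since $\kappa(t)$ is perfect, Remark \ref{Q-Gorenstein} (iii) lets me shrink $T$ around $t$ so that $\mathcal{X}\to T$ is a normal morphism; in particular $\mathcal{X}_\eta$ is geometrically normal over $\kappa(\eta)$, and after shrinking further I may assume that $T$ — hence $\mathcal{X}$ — is $F$-finite and that $\kappa(\eta)$ is $F$-finite. I would then reduce, using that (geometric) strong $F$-regularity of a triple is monotone in the boundary and the exponent and is unchanged after a small enlargement of the exponent or after adding a tail of the form $\tfrac{r-1}{rs+1}D$ with $s\gg 0$ (the perturbation of \cite[Proposition 2.14 (2)]{Sat} used in the proof of Lemma \ref{restriction}), to the case where in addition $\lambda\in\Q$ and $K_{\mathcal{X}_\eta}+\mathcal{D}_\eta$ is $\Q$-Cartier at $y$ with index prime to $p$ — these being exactly the standing hypotheses needed to apply Proposition \ref{GSFR} at the end. (When passing from hypothesis (ii-b) to (ii-a) one argues as in the proof of Corollary \ref{mixed klt local}, using that a strongly $F$-regular surface singularity is $\Q$-factorial.)

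Now the core step. Fix $n\ge 1$; since $T$ is regular, the $n$-th iterate $F_T^n\colon T\to T$ of the absolute Frobenius is finite and flat (Kunz's theorem), so $\mathcal{X}^{(p^n)}:=\mathcal{X}\times_{T,F_T^n}T$, viewed over $T$ via the second projection, is finite and flat over $\mathcal{X}$ via the first projection $q$, with $q$ a universal homeomorphism. Let $\mathcal{D}^{(p^n)}$ and $\ba^{(p^n)}$ be the pullbacks of $\mathcal{D}$ and $\ba$ along $q$. Combining these facts with the fact that $\mathcal{X}\to T$ is a normal morphism, one checks that $\mathcal{X}^{(p^n)}$ is a normal integral $F$-finite excellent scheme with dualizing complex, flat of finite type over the Dedekind scheme $T$; that $K_{\mathcal{X}^{(p^n)}}+\mathcal{D}^{(p^n)}$ is the flat pullback of $K_{\mathcal{X}}+\mathcal{D}$; that the support of $\mathcal{D}^{(p^n)}$ does not contain the closed fibre; and that $\ba^{(p^n)}\sO_{X^{(p^n)}}\ne 0$. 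Since $F_T^n$ fixes $t$ and $\kappa(t)$ is perfect, the closed fibre of $\mathcal{X}^{(p^n)}$ is $X\times_{\kappa(t),F^n}\kappa(t)\cong X$, and under this isomorphism $\mathcal{D}^{(p^n)}|_{X^{(p^n)}}$ and $\ba^{(p^n)}\sO_{X^{(p^n)}}$ correspond to $\mathcal{D}|_X$ and $\ba\sO_X$; hence the closed fibre of $\mathcal{X}^{(p^n)}$ is strongly $F$-regular at the point over $x$, and $K_{X^{(p^n)}}+\mathcal{D}^{(p^n)}|_{X^{(p^n)}}$ is $\Q$-Cartier there. Over $\eta$ there is a canonical isomorphism $\mathcal{X}^{(p^n)}_\eta\cong\mathcal{X}_\eta\times_{\kappa(\eta)}\kappa(\eta)^{1/p^n}$, and faithfully flat base change preserves the $\Q$-Cartierness, and the index, of $K_{\mathcal{X}_\eta}+\mathcal{D}_\eta$ at the unique point $y^{(p^n)}$ over $y$. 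Thus hypotheses (i), (ii-a), (iii-b) of Corollary \ref{mixed klt local} all hold for the family $\mathcal{X}^{(p^n)}\to T$ at $x^{(p^n)}$ and $y^{(p^n)}$, and the corollary gives that the base change of $(\mathcal{X}_\eta,\mathcal{D}_\eta,\ba_\eta^\lambda)$ to $\kappa(\eta)^{1/p^n}$ is strongly $F$-regular at the point over $y$.

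This holds for every $n\ge 1$, so condition (iv) of Proposition \ref{GSFR} is satisfied for $(\mathcal{X}_\eta,\mathcal{D}_\eta,\ba_\eta^\lambda)$ at $y$, and hence that triple is geometrically strongly $F$-regular over $\kappa(\eta)$ at $y$, as desired. I expect the main obstacle to lie in the verification that the Frobenius base change $\mathcal{X}^{(p^n)}$ really does fall within the scope of Corollary \ref{mixed klt local} — normality and integrality of the total space, the correct behaviour of $K_{\mathcal{X}^{(p^n)}}$, of $\mathcal{D}^{(p^n)}$ and of $\ba^{(p^n)}$ under the base change, and, crucially, the fact that the closed fibre together with its boundary and ideal is unchanged up to isomorphism — which relies on Kunz's theorem, on $\mathcal{X}\to T$ being a normal morphism, and on the perfectness of $\kappa(t)$. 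The reduction to a prime-to-$p$ $\Q$-Cartier index, which only serves to make Proposition \ref{GSFR} applicable, is the other point that needs care; I would handle it with the perturbation technique of \cite[Proposition 2.14 (2)]{Sat} together with the relative test ideal results underlying Proposition \ref{GSFR}.
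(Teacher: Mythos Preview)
Your proposal is correct and is essentially the paper's own proof. The only cosmetic differences are: (a) the paper describes the base $T'$ as the normalization of $T$ in $\kappa(\eta)^{1/p^n}$, whereas you describe it as the Frobenius twist $T\xrightarrow{F_T^n}T$; since $T$ is regular these coincide, and the resulting family $\mathcal{X}'=\mathcal{X}\times_T T'$ is your $\mathcal{X}^{(p^n)}$; (b) to force the Cartier index of $K_{\mathcal{X}_\eta}+\mathcal{D}_\eta$ to be prime to $p$, the paper simply replaces $\mathcal{D}$ by $\mathcal{D}+\epsilon A$ for an effective $\Q$-Weil divisor $A\sim K_{\mathcal{X}}+\mathcal{D}$ on the affine total space and small $\epsilon$, which is lighter than the \cite{Sat}-style perturbation you invoke.
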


\begin{proof}
First note that the fiber $X$ is geometrically normal over $\kappa(t)$ at $x$ because $\kappa(t)$ is perfect.
Shrinking $\mathcal{X}$ and $T$ if necessary, we may assume that $\mathcal{X}$ is affine and $g: \mathcal{X} \to T$ is a normal morphism, that is, its fibers are geometrically normal. 
Take an effective $\Q$-Weil divisor $A$ on $\mathcal{X}$, having no common component with $X$, linearly equivalent to $K_{\mathcal{X}} + \mathcal{D}$. 
Replacing $\mathcal{D}$ by $\mathcal{D}+ \epsilon A$ with sufficiently small $\epsilon>0$, we may assume that the index of $K_{\mathcal{X}_\eta}+\mathcal{D}_\eta$ is not divisible by $p$.
Similarly, we may assume that $\lambda$ is a rational number.

Fix an integer $n \ge 1$ and set $L : = \kappa(\eta)^{1/p^n}$.
Let $T'$ be the normalization of $T$ in $L$ and $\mathcal{X}' : = \mathcal{X} \times_T T'$ be the base change of $\mathcal{X}$ to $T'$. 
We note that $T'$ is a Dedekind scheme
and $\mathcal{X}'$ is normal.
Let $\eta'$ be the generic point of $T'$ and $y' \in \mathcal{X}'_{\eta'} \cong \mathcal{X}_\eta \times_{\Spec \kappa(\eta)}  \Spec L$ be the unique point lying over $y \in \mathcal{X}_\eta$. 
By Proposition \ref{GSFR}, it is enough to show that $(\mathcal{X}'_{\eta'}, \mathcal{D}'_{\eta'}, \ba'^\lambda_{\eta'})$ is strongly $F$-regular at $y'$, where $\ba':= \ba \sO_{\mathcal{X}'}$ and $\mathcal{D}' : = \mu^* \mathcal{D}$ is the pullback of $\mathcal{D}$ by the finite flat morphism $\mu : \mathcal{X}' \to \mathcal{X}$.

We verify that $K_{\mathcal{X}'_{\eta'}} + \mathcal{D}'_{\eta'}$ is $\Q$-Cartier at $y'$.
Let $\omega_{\mathcal{X}_{\eta}/\kappa(\eta)}$ and $\omega_{\mathcal{X}'_{\eta'}/L}$ be the relative canonical sheaves of $\mathcal{X}_{\eta} \to \Spec \kappa(\eta)$ and $\mathcal{X}'_{\eta'} \to \Spec L$, respectively.
Since $\Spec \kappa(\eta)$ and $\Spec L$ are Gorenstein, the relative canonical sheaves are nothing but canonical sheaves by \cite[Lemma 0BZL]{Sta}.
Moreover, by \cite[Lemma 0BZV]{Sta}, we have an isomorphism $\mu_{\eta}^* \omega_{\mathcal{X}_{\eta}/\kappa(\eta)} \cong \omega_{\mathcal{X}'_{\eta'}/L}$, where $\mu_{\eta}: \mathcal{X}'_{\eta'} \to \mathcal{X}_{\eta}$ is the induced morphism by $\mu$.
Taking into account that a canonical divisor is unique up to adding a Cartier divisor (\cite[V. Theorem 3.1]{Hart}), 
we conclude that $K_{\mathcal{X}'_{\eta'}} - \mu_{\eta}^* K_{\mathcal{X}_{\eta}}$ is a Cartier divisor.
Thus, $K_{\mathcal{X}'_{\eta'}} + \mathcal{D}'_{\eta'}$ is $\Q$-Cartier at $y'$. 

Finally, we show that $(\mathcal{X}'_{\eta'}, \mathcal{D}'_{\eta'}, \ba'^{\lambda}_{\eta'})$ is strongly $F$-regular at $y'$. 
Take a point $t' \in T'$ lying over $t \in T$ and a point $x' \in X' : =\mathcal{X}'_{t'}$ lying over $x \in X=\mathcal{X}_t$.
Since $\kappa(t)$ is perfect and $X' \cong X \times_{\Spec \kappa(t)} \Spec \kappa(t')$, it follows from Proposition \ref{GSFR} that $(X', \mathcal{D'}|_{X'}, (\ba' \sO_{X'}) ^\lambda)$ is strongly $F$-regular at $x'$.
Moreover, $K_{X'} + \mathcal{D'}|_{X'}$ is $\Q$-Cartier at $x'$ and $x' \in \mathcal{X}'$ is a specialization of $y'$.
It then follows from Corollary \ref{mixed klt local} that $(\mathcal{X}'_{\eta'}, \mathcal{D}'_{\eta'}, \ba'^{\lambda}_{\eta'})$ is strongly $F$-regular at $y'$ as desired.
\end{proof}

As a corollary of Corollary \ref{mixed klt local} and Theorem \ref{deformation GSFR}, we give an affirmative answer to a conjecture of Liedtke-Martin-Matsumoto \cite{LMM} on deformations of isolated lrq singularities. 
An \textit{isolated lrq singularity} over a field $k$ is the spectrum $\Spec R$ of a normal local $k$-algebra $(R,\m)$ such that 
\[
\widehat{R} \cong k[[x_1, \dots, x_d]]^G,
\]
where $\widehat{R}$ is the $\m$-adic completion of $R$ and $G$ is a finite linearly reductive group scheme, acting on a formal power series ring $k[[x_1, \dots, X_d]]$ over $k$, whose action fixes the closed point and is free away from it (see \cite[Definition 6.4]{LMM}). 

\begin{cor}[\textup{\cite[Conjecture 12.1 (1)]{LMM}}]
Let $B$ be the spectrum of a DVR with an algebraically closed residue field $k$ and $\mathcal{X} \to B$ be a flat morphism of finite type with special and geometric generic fibers $\mathcal{X}_0$ and $\mathcal{X}_{\overline{\eta}}$, respectively. 
Let $x \in \mathcal{X}_0$ and $y \in \mathcal{X}_{\overline{\eta}}$ be points such that $x \in \mathcal{X}$ is a specialization of the image $u(y) \in \mathcal{X}$ of $y$ by the morphism $u:\mathcal{X}_{\overline{\eta}} \to \mathcal{X}$.
If the special fiber $\mathcal{X}_0$ has an isolated lrq singularity at $x$, then so does the geometric generic fiber $\mathcal{X}_{\overline{\eta}}$ at $y$. 
\end{cor}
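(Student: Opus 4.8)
The plan is to deduce the corollary from Corollary~\ref{mixed klt local} and Theorem~\ref{deformation GSFR}, using the structure theory of linearly reductive quotient singularities from \cite{LMM} to reduce to these statements, and treating separately the cases $\operatorname{char}\kappa(\eta)=0$ and $\operatorname{char}\kappa(\eta)=p>0$, where $\eta$ denotes the generic point of $B$. First I would shrink $\mathcal{X}$ around $x$: since $\mathcal{X}_0$ is normal at $x$ and $\mathcal{X}_0\subseteq\mathcal{X}$ is cut out by a uniformizer of $B$, the total space $\mathcal{X}$ is normal, and (after a further shrinking) integral, near $x$, so we may assume $\mathcal{X}$ is an excellent normal integral scheme with dualizing complex, flat over the trait $B$, with $\mathcal{X}_0$ normal and integral, and we put $\mathcal{Z}:=\overline{\{u(y)\}}\subseteq\mathcal{X}$, an irreducible closed subset dominating $B$ and containing the closed point $x$; its generic point is $\xi:=u(y)\in\mathcal{X}_\eta$. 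Thus we are in Setting~\ref{local setting} with $T=B$ (so $\dim T=1$), the deformation $\mathcal{X}/B$ of $X:=\mathcal{X}_0$, $\mathcal{D}=0$, $\ba=\sO_{\mathcal{X}}$ and $\lambda=1$. Since $k$ is algebraically closed, hence perfect and $F$-finite, an isolated lrq singularity over $k$ is $\Q$-Gorenstein and is klt if $\operatorname{char}k=0$, strongly $F$-regular if $\operatorname{char}k=p>0$ (see \cite{LMM}); in particular $\mathcal{X}_0$ is log $\Q$-Gorenstein at $x$ with the corresponding mild singularity. By \cite{LMM}, it suffices to show that $\mathcal{X}_{\overline\eta}$ is klt at $y$ when $\operatorname{char}\kappa(\eta)=0$ and strongly $F$-regular at $y$ when $\operatorname{char}\kappa(\eta)=p>0$; and, again by \cite{LMM} --- for instance, because the linearly reductive covering of $\mathcal{X}_0$ near $x$ extends over $\mathcal{X}$ and forces the total space to be $\Q$-Gorenstein near $\mathcal{Z}$ --- we may assume that $K_{\mathcal{X}_\eta}$ is $\Q$-Cartier at $\xi$, so that condition (ii-a) of Corollary~\ref{mixed klt local} holds (and this is automatic via condition (ii-b) when $\dim\sO_{\mathcal{X}_0,x}\le2$).

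If $\operatorname{char}\kappa(\eta)=0$: when $\operatorname{char}k=0$, the fiber $\mathcal{X}_0$ is klt at $x$, so Corollary~\ref{mixed klt local} in case (iii-c) gives that $\mathcal{X}_\eta$ is klt at $\xi$; when $\operatorname{char}k=p>0$ (so $B$ has mixed characteristic), the fiber $\mathcal{X}_0$ is strongly $F$-regular at $x$ and $\kappa(t)=k$ is $F$-finite, so Corollary~\ref{mixed klt local} in case (iii-a) gives the same conclusion. Since $\mathcal{X}_{\overline\eta}\to\mathcal{X}_\eta$ is the flat base change along the characteristic-zero field extension $\kappa(\eta)\subseteq\kappa(\overline\eta)$, it follows (cf.~Lemma~\ref{regular map and lc} and Remark~\ref{resolution}) that $\mathcal{X}_{\overline\eta}$ is klt at $y$, hence has an isolated lrq singularity at $y$ by \cite{LMM}.

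If $\operatorname{char}\kappa(\eta)=p>0$, then $B$ has equal characteristic $p$ and $k$ is perfect. Here $\mathcal{X}_0$ is strongly $F$-regular at $x$, and conditions (i), (ii), (iii-b) of Corollary~\ref{mixed klt local} are satisfied, so Theorem~\ref{deformation GSFR} applies and shows that $(\mathcal{X}_\eta,0,\sO_{\mathcal{X}_\eta})$ is geometrically strongly $F$-regular over $\kappa(\eta)$ at $\xi$. By Proposition~\ref{GSFR} (base changing to the perfect closure of $\kappa(\eta)$ and then to $\kappa(\overline\eta)$), $\mathcal{X}_{\overline\eta}=\mathcal{X}_\eta\times_{\kappa(\eta)}\kappa(\overline\eta)$ is strongly $F$-regular at $y$, and so has an isolated lrq singularity at $y$ by \cite{LMM}.

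I expect the main obstacle to be the reduction step borrowed from \cite{LMM}: that the linearly reductive covering of the special fiber specializes over the total space (so that the $\Q$-Gorenstein hypotheses needed to invoke Corollary~\ref{mixed klt local} and Theorem~\ref{deformation GSFR} are available), and that a klt or strongly $F$-regular deformation of an isolated lrq singularity is again one. Granting this, the genuinely new input is the equal-characteristic-$p$ case, which requires the \emph{geometric} generic fiber --- not merely the generic fiber --- to be strongly $F$-regular; this is precisely what Theorem~\ref{deformation GSFR} together with Proposition~\ref{GSFR} supplies, and it is the step that previously obstructed the conjecture. The remaining work, matching the normality, integrality, $F$-finiteness and $\Q$-Cartier hypotheses of our results with the data furnished by \cite{LMM}, should be routine.
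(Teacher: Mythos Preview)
Your plan is on the right track and matches the paper's strategy in outline, but the paper's proof is both shorter and avoids two points you leave unresolved.

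First, the reduction via \cite{LMM}: the paper invokes a single lemma, \cite[Lemma 12.5]{LMM}, which already disposes of the equal-characteristic-zero case and reduces the remaining cases to showing that if $\mathcal{X}_0$ is \emph{two-dimensional} strongly $F$-regular at $x$, then $\mathcal{X}_{\overline\eta}$ is klt (mixed characteristic) or strongly $F$-regular (equal characteristic $p$) at $y$. In particular, condition (ii-b) of Corollary~\ref{mixed klt local} is available for free, and there is no need for your speculative argument that ``the linearly reductive covering extends over $\mathcal{X}$ and forces the total space to be $\Q$-Gorenstein.'' That covering claim is not established in the paper and you should not rely on it; the two-dimensional reduction makes it unnecessary.

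Second, and more importantly, the $F$-finiteness issue in equal characteristic $p$ is not quite ``routine'' in the way you suggest. The DVR $B$ is not assumed $F$-finite: its residue field is algebraically closed, but its fraction field can be enormous. Condition (iii-b) of Corollary~\ref{mixed klt local} and hence Theorem~\ref{deformation GSFR} require $\sO_{T,t}$ itself to be $F$-finite, not just $\kappa(t)$. The paper handles this by passing to the completion $\widehat{B}$, which is $F$-finite (a complete local ring with perfect residue field is $F$-finite), applying Theorem~\ref{deformation GSFR} over $\widehat{B}$, and then using that strong $F$-regularity descends along the faithfully flat map $\mathcal{X}_{\overline{\hat\eta}}\to\mathcal{X}_{\overline\eta}$ \cite[Theorem 3.1 b)]{HH}. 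You should make this step explicit rather than fold it into ``matching hypotheses.''
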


\begin{proof}
By \cite[Lemma 12.5]{LMM}, the problem is reduced to showing that if $B$ is of mixed characteristic $(0,p)$ (resp.  characteristic $p>0$) and $\mathcal{X}_0$ is two-dimensional strongly $F$-regular at $x$, then $\mathcal{X}_{\overline{\eta}}$ is klt (resp. strongly $F$-regular) at $y$.
The mixed characteristic case follows from the case (iii-a) of Corollary \ref{mixed klt local}, and therefore, we consider the case where $B$ is of characteristic $p$.  
Since strong $F$-regularity descends under faithfully flat morphisms by \cite[Theorem 3.1 b)]{HH}, passing to the completion, we may assume that $B$ is the spectrum of a complete DVR. 
Then $B$ is $F$-finite and the assertion follows from Theorem \ref{deformation GSFR}. 
\end{proof}

Next, we study the behavior of singularities in proper flat families. 

\begin{cor}\label{mixed klt proper}
With notation as in Setting \ref{local setting}, assume that the following conditions are all satisfied. 
\begin{enumerate}[label=\textup{(\roman*)}]
\item $T$ is a Dedekind scheme, that is, $\dim T=1$. 
\item One of the following holds. 
\begin{enumerate}[label=\textup{(\alph*)}]
\item 
$K_{X}+\mathcal{D}|_X$ and $K_{\mathcal{X}_{\eta}}+\mathcal{D}_{\eta}$ are $\Q$-Cartier, or 
\item
$\dim X  \le 2$.
\end{enumerate}
\item One of the following cases occurs.
\begin{enumerate}[label=\textup{(\alph*)}]
\item $\sO_{T,t}$ is of mixed characteristic $(0,p)$, the residue filed $\kappa(t)$ is $F$-finite of characteristic $p$, and $(X, \mathcal{D}|_X, (\ba \sO_X)^\lambda)$ is strongly $F$-regular,
\item $\sO_{T,t}$ is $F$-finite of characteristic $p>0$ and $(X, \mathcal{D}|_X, (\ba \sO_X)^\lambda)$ is strongly $F$-regular, or
\item $\sO_{T,t}$ is of equal characteristic zero and $(X, \mathcal{D}|_X, (\ba \sO_X)^\lambda)$ is klt.
\end{enumerate}
\item $\mathcal{X}$ is proper over $T$. 
\end{enumerate}
Then $(\mathcal{X}_\eta, \mathcal{D}_\eta, \ba_\eta^\lambda)$ is klt in the case $\textup{(iii-a)}$ or $\textup{(iii-c)}$ and is strongly $F$-regular in the case $\textup{(iii-b)}$. 
Moreover, in the case \textup{(iii-b)}, if the residue field $\kappa(t)$ is perfect, then $(\mathcal{X}_\eta, \mathcal{D}_\eta, \ba_\eta^\lambda)$ is geometrically strongly $F$-regular over the function field $\kappa(\eta)$. 
\end{cor}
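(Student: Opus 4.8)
The plan is to deduce this global statement from the local one, Corollary~\ref{mixed klt local} (together with Theorem~\ref{deformation GSFR} for the geometric refinement), by a properness argument of exactly the same shape as the deduction of Corollary~\ref{mixed lc proper} from Corollary~\ref{mixed lc local}. Since kltness, strong $F$-regularity, and geometric strong $F$-regularity over $\kappa(\eta)$ are all local conditions on $\mathcal{X}_\eta$, it suffices to establish the conclusion at an arbitrary point $y \in \mathcal{X}_\eta$.

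So fix $y \in \mathcal{X}_\eta$ and let $\mathcal{Z} := \overline{\{y\}}$ be its closure in $\mathcal{X}$, an irreducible closed subset with generic point $y$. Because $y$ lies over the generic point $\eta$ of $T$, the subset $\mathcal{Z}$ dominates $T$. This is where properness (hypothesis (iv)) is used: the image of $\mathcal{Z}$ in $T$ is closed and contains $\eta$, hence equals $T$, so the fiber $\mathcal{Z}_t = \mathcal{Z} \cap X$ is nonempty. Since $\mathcal{X}$ is proper over $T$ and $t$ is the closed point of $T$, the fiber $X = \mathcal{X}_t$ is proper over $\kappa(t)$, so $\mathcal{Z}_t$ has a closed point $x$; such $x$ is then a closed point of $\mathcal{X}$ lying in $\mathcal{Z} = \overline{\{y\}}$, i.e.\ $y$ is a generalization of $x$.

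It remains to check that $(\mathcal{X}, \mathcal{D}, \ba^\lambda)$ together with the data $x$, $\mathcal{Z}$, $y$ satisfies the hypotheses of Corollary~\ref{mixed klt local}: condition (i) there is condition (i) here; condition (ii) there follows from (ii) here (if the divisors are $\Q$-Cartier globally they are $\Q$-Cartier at $x$ and $y$, and if $\dim X \le 2$ then $\dim \sO_{X,x} \le 2$); and condition (iii) there is the local-at-$x$ instance of the global condition (iii) here, since strong $F$-regularity (resp.\ kltness) of $(X, \mathcal{D}|_X, (\ba\sO_X)^\lambda)$ at all points of $X$ holds in particular at $x$. Corollary~\ref{mixed klt local} then yields that $(\mathcal{X}_\eta, \mathcal{D}_\eta, \ba_\eta^\lambda)$ is klt at $y$ in cases (iii-a), (iii-c) and strongly $F$-regular at $y$ in case (iii-b); as $y$ was arbitrary, this proves the first assertion. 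For the final assertion one argues identically, but under (iii-b) with $\kappa(t)$ perfect one replaces the appeal to Corollary~\ref{mixed klt local} by Theorem~\ref{deformation GSFR}, applied with the same $x, \mathcal{Z}, y$, to conclude that $(\mathcal{X}_\eta, \mathcal{D}_\eta, \ba_\eta^\lambda)$ is geometrically strongly $F$-regular over $\kappa(\eta)$ at $y$, and hence at every point of $\mathcal{X}_\eta$.

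I do not anticipate a genuine obstacle: the substantive content is already packaged into Corollary~\ref{mixed klt local} and Theorem~\ref{deformation GSFR}, and the only points needing care are the two just noted — that properness of $\mathcal{X}/T$ forces $\mathcal{Z}$ to meet the closed fiber, and that one may then select $x$ to be a closed point of $\mathcal{X}$ which is a specialization of $y$, so that the local results apply verbatim.
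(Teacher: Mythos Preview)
Your proposal is correct and follows essentially the same approach as the paper: pick an arbitrary $y \in \mathcal{X}_\eta$, use properness to find a specialization $x \in X$, and apply Corollary~\ref{mixed klt local} (and Theorem~\ref{deformation GSFR} for the geometric refinement). You supply a few more details than the paper does---in particular, verifying that $x$ can be taken to be a closed point of $X$ as required by Corollary~\ref{mixed klt local}---but the argument is the same.
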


\begin{proof}
Take any point $y \in \mathcal{X}_\eta$.
Since the structure map $\mathcal{X} \to T$ is a closed map, there exists a point $x \in X$ which is a specialization of $y$.
It then follows from Corollary \ref{mixed klt local} that $(\mathcal{X}, \mathcal{D}, \ba^\lambda)$ is klt (resp.~strongly $F$-regular) at $y$ in the case (a) or (c) (resp. the case (b)). 
The assertion on geometric strong $F$-regularity follows from Theorem \ref{deformation GSFR}. 
\end{proof}

\begin{cor}
With notation as in Setting \ref{local setting}, assume that the conditions $\textup{(i)}$, $\textup{(ii)}$, and $\textup{(iv)}$ in Corollary \ref{mixed klt proper} hold.  
If $T$ is of finite type over $\Spec \Z$, 
then the following conditions are equivalent to each other: 
\begin{enumerate}[label=\textup{(\roman*)}]
\item 
$(\mathcal{X}_{p}, \mathcal{D}_p, \ba_p^\lambda)$ is strongly $F$-regular for some closed point $p \in T$,
\item 
$(\mathcal{X}_p, \mathcal{D}_p, \ba_p^\lambda)$ is strongly $F$-regular for a general closed point $p \in T$, 
\item 
$(\mathcal{X}_\eta, \mathcal{D}_\eta, \ba_{\eta}^\lambda)$ is klt. 
\end{enumerate}
\end{cor}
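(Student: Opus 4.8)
The plan is to verify the cycle $\mathrm{(ii)}\Rightarrow\mathrm{(i)}\Rightarrow\mathrm{(iii)}\Rightarrow\mathrm{(ii)}$. Since $T$ is integral of dimension one and of finite type over $\Spec\Z$, I will use freely that every non-empty open subset of $T$ contains a closed point, that each closed point $p\in T$ has finite --- hence $F$-finite --- residue field, and that $\sO_{T,p}$ is a discrete valuation ring; the substantive case is the one in which $T\to\Spec\Z$ is dominant, so that the generic point $\eta$ has residue characteristic zero, and I would treat that case (when $T$ is of finite type over a finite field, (iii) should be replaced by geometric strong $F$-regularity of $\mathcal{X}_\eta$, via Theorem~\ref{deformation GSFR}).

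The implication $\mathrm{(ii)}\Rightarrow\mathrm{(i)}$ is immediate. For $\mathrm{(i)}\Rightarrow\mathrm{(iii)}$, given a closed point $p$ with $(\mathcal{X}_p,\mathcal{D}_p,\ba_p^\lambda)$ strongly $F$-regular, I would view $\mathcal{X}\to T$ as a deformation of $\mathcal{X}_p$ with reference point $p$ and apply Corollary~\ref{mixed klt proper}: conditions (i) and (iv) there are inherited; condition (iii-a) holds because $\sO_{T,p}$ is of mixed characteristic, $\kappa(p)$ is $F$-finite, and $(\mathcal{X}_p,\mathcal{D}_p,\ba_p^\lambda)$ is strongly $F$-regular; and condition (ii) holds because either $\dim\mathcal{X}_p=\dim\mathcal{X}_\eta\le 2$ in case (ii-b), or, in case (ii-a), both $K_{\mathcal{X}_\eta}+\mathcal{D}_\eta$ and $K_{\mathcal{X}_p}+\mathcal{D}_p$ are $\Q$-Cartier. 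Corollary~\ref{mixed klt proper} then gives that $(\mathcal{X}_\eta,\mathcal{D}_\eta,\ba_\eta^\lambda)$ is klt.

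For $\mathrm{(iii)}\Rightarrow\mathrm{(ii)}$, note first that the very statement that $(\mathcal{X}_\eta,\mathcal{D}_\eta,\ba_\eta^\lambda)$ is klt already entails that $K_{\mathcal{X}_\eta}+\mathcal{D}_\eta$ is $\Q$-Cartier (Definition~\ref{mmp}), in both cases (ii-a) and (ii-b). Choosing $m\ge 1$ with $m(K_{\mathcal{X}_\eta}+\mathcal{D}_\eta)$ Cartier and spreading out --- the reflexive sheaf $\sO_{\mathcal{X}}(m(K_{\mathcal{X}}+\mathcal{D}))$ is invertible over the preimage of a dense open $T^{\circ}\subseteq T$ --- I may shrink $T$ to $T^{\circ}$, after which $m(K_{\mathcal{X}}+\mathcal{D})$ is Cartier. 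Then $(\mathcal{X},\mathcal{D},\ba^\lambda)\to T^{\circ}$ is a flat family with log $\Q$-Gorenstein total space whose generic fibre is klt, and the classical correspondence between characteristic-zero multiplier ideals and big test ideals modulo $p$ (Hara--Watanabe, Takagi; cf.\ \cite[Corollary 3.4]{Tak0}, applied as in Example~\ref{counter eg local}) yields $\tau(\mathcal{X}_p,\mathcal{D}_p,\ba_p^\lambda)=\sO_{\mathcal{X}_p}$, i.e.\ $(\mathcal{X}_p,\mathcal{D}_p,\ba_p^\lambda)$ is strongly $F$-regular, for all $p$ in a dense open of $T^{\circ}$.

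The hard part will be the $\Q$-Gorenstein bookkeeping across fibres. Both Corollary~\ref{mixed klt proper} and the reduction-modulo-$p$ correspondence demand that the fibres in play be log $\Q$-Gorenstein; the generic fibre is so by (iii) (and in case (ii-a) the distinguished fibre is so by hypothesis), and this propagates to a dense open of closed points by the spreading-out above, but the single closed fibre $\mathcal{X}_p$ witnessing (i) need not be log $\Q$-Gorenstein unless it lies in that dense open. Thus in the higher-dimensional case (ii-a) the clean argument for $\mathrm{(i)}\Rightarrow\mathrm{(iii)}$ requires one to supply log $\Q$-Gorensteinness of $\mathcal{X}_p$ separately, presumably as part of the intended reading of (i). In the two-dimensional case (ii-b) this difficulty evaporates, since a strongly $F$-regular surface singularity is $F$-rational, hence pseudo-rational, hence $\Q$-factorial by Remark~\ref{remark rat}(ii), so there every fibre occurring in the argument is log $\Q$-Gorenstein for free.
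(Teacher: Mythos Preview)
Your argument is correct and follows the same three-step cycle as the paper's proof: $(\mathrm{ii})\Rightarrow(\mathrm{i})$ is trivial, $(\mathrm{i})\Rightarrow(\mathrm{iii})$ is Corollary~\ref{mixed klt proper}, and $(\mathrm{iii})\Rightarrow(\mathrm{ii})$ is the Hara--Yoshida/Takagi reduction-mod-$p$ correspondence. The paper's proof is a terse three-line version of exactly this, citing \cite{HY} and \cite{Tak0} for the last implication without spelling out the spreading-out of $\Q$-Cartierness that you (correctly) include.

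Your closing paragraph flags a genuine point that the paper passes over in silence: in case (ii-a) the hypothesis only supplies log $\Q$-Gorensteinness of the fixed reference fibre $\mathcal{X}_t$ and of $\mathcal{X}_\eta$, not of the arbitrary closed fibre $\mathcal{X}_p$ witnessing condition~(i), so to invoke Corollary~\ref{mixed klt proper} with reference point $p$ one must read $\Q$-Gorensteinness of $(\mathcal{X}_p,\mathcal{D}_p)$ into condition~(i). The paper does not comment on this; your interpretation is the natural one, and in case (ii-b) the issue disappears exactly as you say.
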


\begin{proof}
It is obvious that (ii) implies (i). The implication (i)$\Rightarrow$(iii) is a consequence of Corollary \ref{mixed klt proper}. 
The implication (iii)$\Rightarrow$(ii) follows from the fact that modulo $p$ reduction of a klt singularity is strongly $F$-regular for general $p$ (see \cite{HY} and \cite{Tak0}). 
\end{proof}

\begin{cor}\label{general SFR fibers}
Let $T$ be a smooth curve over a perfect field $k$ of characteristic $p>0$ and $(\mathcal{X}, \mathcal{D}, \ba^{\lambda}) \to T$ be a proper flat family of triples over $T$, where $\mathcal{D}$ is an effective $\Q$-Weil divisor on a normal variety $\mathcal{X}$ over $k$, $\ba \subseteq \sO_X$ is a nonzero coherent ideal sheaf, and $\lambda > 0$ is a real number. 
\begin{enumerate}[label=\textup{(\arabic*)}]
\item 
Suppose that $k$ is an uncountable algebraically closed field. 
If some closed fiber $(\mathcal{X}_{t_0}, \mathcal{D}_{t_0}, (\ba \sO_{\mathcal{X}_{t_0}})^{\lambda})$ is log $\Q$-Gorenstein and strongly $F$-regular and if a general closed fiber $(\mathcal{X}_t, \mathcal{D}_t)$ is log $\Q$-Gorenstein, then the general fiber $(\mathcal{X}_{t}, \mathcal{D}_{t}, (\ba \sO_{\mathcal{X}_{t}})^{\lambda})$ is also strongly $F$-regular.
\item If some closed fiber $(\mathcal{X}_{t_0}, \mathcal{D}_{t_0}, (\ba \sO_{\mathcal{X}_{t_0}})^{\lambda})$ is two-dimensional strongly $F$-regular, then so is a general fiber $(\mathcal{X}_{t}, \mathcal{D}_{t}, (\ba \sO_{\mathcal{X}_{t}})^{\lambda})$. 
\end{enumerate}
\end{cor}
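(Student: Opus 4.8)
The plan is to obtain, in each case, that the generic fiber $(\mathcal{X}_\eta,\mathcal{D}_\eta,\ba_\eta^\lambda)$ is geometrically strongly $F$-regular over $\kappa(\eta)$ by applying Corollary \ref{mixed klt proper} in case (iii-b), and then to spread this out to a general closed fiber. For (1), I would take $t_0$ as the reference point of Setting \ref{local setting} --- legitimate because $k$ is algebraically closed, so $t_0$ is $k$-rational --- with closed fiber $X:=\mathcal{X}_{t_0}$, and verify the hypotheses of Corollary \ref{mixed klt proper} in case (iii-b): (i) holds since $\dim T=1$, (iv) is assumed, and (iii-b) holds because a perfect field is $F$-finite (so $\sO_{T,t_0}$ is $F$-finite of characteristic $p$) and $(\mathcal{X}_{t_0},\mathcal{D}_{t_0},(\ba\sO_{\mathcal{X}_{t_0}})^\lambda)$ is strongly $F$-regular by hypothesis. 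For condition (ii-a), $K_{\mathcal{X}_{t_0}}+\mathcal{D}_{t_0}$ is $\Q$-Cartier by hypothesis; for $K_{\mathcal{X}_\eta}+\mathcal{D}_\eta$ I would use Remark \ref{Q-Gorenstein}: by Remark \ref{Q-Gorenstein} (iii) a general fiber of $\mathcal{X}\to T$ is normal ($k$ perfect, $\mathcal{X}$ proper over $T$), so Remark \ref{Q-Gorenstein} (ii) applies --- here one uses that $k$ is uncountable algebraically closed and $T$ is of finite type over $k$ --- and the assumed condition (D) (a general closed fiber is log $\Q$-Gorenstein) is equivalent to condition (B), i.e.\ $(\mathcal{X}_\eta,\mathcal{D}_\eta)$ is log $\Q$-Gorenstein. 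Corollary \ref{mixed klt proper} then gives that $(\mathcal{X}_\eta,\mathcal{D}_\eta,\ba_\eta^\lambda)$ is strongly $F$-regular, and since $\kappa(t_0)=k$ is perfect, its last clause upgrades this to geometric strong $F$-regularity over $\kappa(\eta)$.

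For (2) the argument is the same except at two points. First, $t_0$ need not be $k$-rational, so I would first pass to the finite \'etale base change $T\times_k\kappa(t_0)\to T$ (and the connected component through a point over $t_0$ with the same residue field); this is harmless because strong $F$-regularity of fibers is preserved and detected by faithfully flat descent (\cite[Theorem 3.1 b)]{HH}), the notion of a general closed fiber is unchanged, and $\kappa(t_0)$, being finite over a perfect field, is again perfect. Second, since $\dim\mathcal{X}_{t_0}\le 2$ I would invoke condition (ii-b) of Corollary \ref{mixed klt proper} rather than (ii-a), so that no log $\Q$-Gorenstein hypothesis on the generic fiber is needed in advance; conditions (i), (iii-b), (iv) hold exactly as before. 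This again yields that $(\mathcal{X}_\eta,\mathcal{D}_\eta,\ba_\eta^\lambda)$ is strongly $F$-regular, and geometrically so over $\kappa(\eta)$ since $\kappa(t_0)$ is perfect.

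It remains to propagate geometric strong $F$-regularity of the generic fiber to a general closed fiber, and here I would follow the proof of Theorem \ref{deformation GSFR}: replace $T$ by the normalization $T'$ of $T$ in a suitable finite purely inseparable extension $L=\kappa(\eta)^{1/p^n}$ and $\mathcal{X}$ by the still-normal base change $\mathcal{X}':=\mathcal{X}\times_T T'$, so that by Proposition \ref{GSFR} the generic fiber of $\mathcal{X}'\to T'$ is literally strongly $F$-regular. Then, using the relative test ideal of the proper flat family $\mathcal{X}'\to T'$ and its base-change properties from Appendix \ref{relative test ideals}, this test ideal equals the unit ideal near the generic fiber, hence --- by properness of $\mathcal{X}'\to T'$ --- along the fibers over a nonempty open $V'\subseteq T'$, and its formation commutes with passage to a general closed fiber; thus the normal closed fibers of $\mathcal{X}'\to T'$ over $V'$ are strongly $F$-regular. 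Descending along the finite faithfully flat morphism $\mathcal{X}'\to\mathcal{X}$ via \cite[Theorem 3.1 b)]{HH}, we conclude that $(\mathcal{X}_t,\mathcal{D}_t,(\ba\sO_{\mathcal{X}_t})^\lambda)$ is strongly $F$-regular for $t$ in a nonempty open subset of $T$; in case (2) such a fiber is moreover two-dimensional.

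The main obstacle is precisely this last step --- the passage from the generic fiber to a general closed fiber --- which rests on the relative-test-ideal formalism of Appendix \ref{relative test ideals}, namely that the relative test ideal of the family is trivial near the generic fiber and that its formation commutes with base change to a general closed point. Everything else is routine bookkeeping: reducing to a $k$-rational reference point, checking that normality and integrality survive the auxiliary base changes, and matching up the hypotheses with those of Corollary \ref{mixed klt proper}.
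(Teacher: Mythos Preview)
Your argument for obtaining geometric strong $F$-regularity of the generic fiber via Remark~\ref{Q-Gorenstein} and Corollary~\ref{mixed klt proper} matches the paper. The difference is in how you spread this out to a general closed fiber.

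The paper does not re-enter the relative test ideal formalism by hand at this point; instead, after shrinking $T$ to a nonempty open $V$ on which $K_{\mathcal{X}}+\mathcal{D}$ is $\Q$-Cartier (possible because it is $\Q$-Cartier along $\mathcal{X}_\eta$), it applies Proposition~\ref{fibrewise GSFR} directly, which packages the passage from geometric strong $F$-regularity of the generic fiber to that of a general closed fiber. Your alternative---pass to $T'$ so that the generic fiber becomes literally strongly $F$-regular, argue that the test ideal is trivial over a nonempty open and restricts correctly to closed fibers, then descend along $\mathcal{X}'\to\mathcal{X}$---is essentially a rederivation of Proposition~\ref{fibrewise GSFR} inline. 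It works in outline, but you skip the same preliminary reductions that the proof of Proposition~\ref{fibrewise GSFR} (via Theorem~\ref{absolute tau vs fiber}) requires: one must first shrink so that $K_{\mathcal{X}'}+\mathcal{D}'$ is $\Q$-Cartier on the total space, perturb so that its index is prime to $p$, and replace $\lambda$ by a nearby rational number. Without these, the appendix's comparison between the test ideal on the total space and on the fibers is not available, and your clause ``its formation commutes with passage to a general closed fiber'' is precisely the nontrivial content of Theorem~\ref{absolute tau vs fiber}. The extra base change to $T'$ is then also redundant, since Theorem~\ref{absolute tau vs fiber} already incorporates the passage to $V^{n}$.

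Your \'etale base change in (2) to make $t_0$ $k$-rational is harmless but unnecessary: Setting~\ref{local setting} does not require $T$ to be a $k$-scheme, so one may simply take $\kappa(t_0)$ (still perfect and $F$-finite) as the base field, after which $t_0$ is rational. This is what the paper implicitly does when it says the extra hypothesis on $k$ is not needed and that the log $\Q$-Gorensteinness of the generic fiber comes from Corollary~\ref{mixed klt local}.
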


\begin{proof}
(1) First note that the generic fiber $(\mathcal{X}_{\eta}, \mathcal{D}_{\eta}, (\ba \sO_{\mathcal{X}_{\eta}})^{\lambda})$ is log $\Q$-Gorenstein by Remark \ref{Q-Gorenstein}. 
Since the closed fiber $\mathcal{X}_{t_0}$ is geometrically normal over $\kappa(t_0)=k$, shrinking $T$ if necessary, we may assume that all fibers of $\mathcal{X} \to T$ is geometrically normal.
On the other hand, it follows from Corollary \ref{mixed klt proper} that $(\mathcal{X}_{\eta}, \mathcal{D}_{\eta}, \ba_\eta^\lambda)$ is geometrically strongly $F$-regular over the function field $\kappa(\eta)$. 
Since $K_{\mathcal{X}} + \mathcal{D}$ is $\Q$-Cartier along $\mathcal{X}_{\eta}$, there exists a non-empty open subset $V \subseteq T$ such that $K_{\mathcal{X}}+\mathcal{D}$ is $\Q$-Cartier on $\mathcal{X}_V : = \mathcal{X} \times_T V$.
Applying Proposition \ref{fibrewise GSFR} to $\mathcal{X}_V$, we see that the fiber $(\mathcal{X}_s, \mathcal{D}_s, \ba_s^\lambda)$ over a general closed point $s \in V$ is geometrically strongly $F$-regular over $\kappa(s)$. 
In particular, $(\mathcal{X}_s, \mathcal{D}_s, \ba_s^\lambda)$ is strongly $F$-regular by Proposition \ref{GSFR}.

(2) The proof is essentially the same as that of (1), but the log $\Q$-Gorensteinness of the generic fiber $(\mathcal{X}_{\eta}, \mathcal{D}_{\eta})$ follows from Corollary \ref{mixed klt local}, and therefore, the extra assumption on the base field $k$ is unnecessary. 
\end{proof}

\begin{rem}
When $\mathcal{D}=0$, we do not need to assume the normality of $\mathcal{X}$ in Corollaries \ref{mixed klt local}, \ref{mixed klt proper}, and \ref{general SFR fibers}. 
In this case, since normality lifts from Cartier divisors, by shrinking $\mathcal{X}$ and $T$ in Corollary \ref{mixed klt local} (resp.~by shrinking $T$ in Corollaries \ref{mixed klt proper} and \ref{general SFR fibers}), we can reduce to the case where $\mathcal{X}$ is normal. 
\end{rem}

In the proof of Corollary \ref{general SFR fibers}, we used the following proposition, which shows that the strong $F$-regularity of general fibers is deduced from that of geometric generic fibers.  
\begin{prop}[$\textup{cf. \cite[Corollary 4.21]{PSZ}}$]\label{fibrewise GSFR}
  Suppose that $V$ is an $F$-finite regular integral scheme of characteristic $p>0$, $f: X \to V$ is a flat morphism of finite type from a normal integral scheme $X$, and $\Delta$ is an effective $\Q$-Weil divisor on $X$ such that $K_X+\Delta$ is $\Q$-Cartier. 
  Let $\ba$ be a nonzero coherent ideal sheaf and $\lambda$ be a real number, and $\eta \in V$ denotes the generic point.
  If the generic fiber $(X_{\eta}, \Delta_{\eta}, \ba_\eta^\lambda)$ is geometrically strongly $F$-regular over the function field $\kappa(\eta)$ of $V$, then $(X_s, \Delta_s, \ba_s^\lambda)$ is geometrically strongly $F$-regular over $\kappa(s)$ for a general point $s \in V$.
\end{prop}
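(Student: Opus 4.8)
The plan is to reduce everything to a single ``generic freeness'' type statement about test ideals, applied after a base change that kills the $p$-divisibility obstruction in the Cartier index. First I would spread out: since $V$ is excellent and $f$ is of finite type, after shrinking $V$ I may assume $V = \Spec A$ is affine, $X = \Spec B$ is affine, $\ba$ is generated by finitely many global sections, and $m(K_X+\Delta)$ is Cartier for some fixed $m \ge 1$; replacing $\Delta$ by $\Delta + \epsilon A_0$ for a suitable effective divisor $A_0 \sim K_X + \Delta$ with small rational $\epsilon$ (as in the proof of Theorem \ref{deformation GSFR}), I may further assume the index of $K_X+\Delta$ is prime to $p$ and that $\lambda$ is rational. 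Here I should check that this perturbation is harmless: enlarging $\Delta$ only shrinks the strongly $F$-regular locus, and strong $F$-regularity of $(X_\eta,\Delta_\eta,\ba_\eta^\lambda)$ is an open condition in the coefficients, so the hypothesis is preserved for small $\epsilon$; I also need that geometric strong $F$-regularity over $\kappa(\eta)$ is preserved, which again follows from openness together with Proposition \ref{GSFR}.

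Next I would handle the ``geometric'' aspect by a further base change on $V$. Fix a sufficiently divisible $n \ge 1$. Let $L := \kappa(\eta)^{1/p^n}$, let $V'$ be the normalization of $V$ in $L$, and set $X' := X \times_V V'$. As in the proof of Theorem \ref{deformation GSFR}, $V'$ is again $F$-finite regular (after shrinking), $X'$ is normal, the relative canonical sheaf computation via \cite[Lemmas 0BZL, 0BZV]{Sta} shows $K_{X'} + \Delta'$ is still $\Q$-Cartier with index prime to $p$ where $\Delta' = \mu^*\Delta$ and $\mu : X' \to X$ is the finite flat map, and the generic fiber of $X' \to V'$ is $X_\eta \times_{\kappa(\eta)} \Spec L$. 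By Proposition \ref{GSFR}, $(X'_{\eta'}, \Delta'_{\eta'}, (\ba')^\lambda_{\eta'})$ is strongly $F$-regular, i.e. $\tau(X'_{\eta'}, \Delta'_{\eta'}, (\ba')^\lambda_{\eta'}) = \sO_{X'_{\eta'}}$. Since, by Proposition \ref{GSFR} again, geometric strong $F$-regularity of a fiber over $\kappa(s)$ can be tested after the perfection base change, it suffices to prove: for a general closed point $s' \in V'$, the fiber $(X'_{s'}, \Delta'_{s'}, (\ba')^\lambda_{s'})$ is strongly $F$-regular. The image in $V$ of a general point of $V'$ is a general point of $V$, so this will give the claim.

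The heart of the argument is then the compatibility of the (classical) test ideal with restriction to fibers of a map from a regular base, in the case where the index is prime to $p$. Concretely, I would invoke the theory of relative test ideals from the appendix: by \cite[Theorem 6.10 (1)]{HY}-type behaviour, or more precisely by the results of Appendix \ref{relative test ideals} (whichever statement says that $\tau(X,\Delta,\ba^\lambda)$ restricts to $\tau(X_s,\Delta_s,\ba_s^\lambda)$ for general $s$ when $K_X+\Delta$ is $\Q$-Cartier with index prime to $p$ and the base is $F$-finite regular), there is a non-empty open $W \subseteq V'$ such that $\tau(X', \Delta', (\ba')^\lambda) \cdot \sO_{X'_{s'}} = \tau(X'_{s'}, \Delta'_{s'}, (\ba')^\lambda_{s'})$ for all $s' \in W$. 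Now $\tau(X',\Delta',(\ba')^\lambda)$ agrees with $\tau(X'_{\eta'},\Delta'_{\eta'},(\ba')^\lambda_{\eta'}) = \sO_{X'_{\eta'}}$ at the generic point, hence the quotient $\sO_{X'}/\tau(X',\Delta',(\ba')^\lambda)$ is a finitely generated $\sO_{X'}$-module supported away from $X'_{\eta'}$, so its support is contained in finitely many closed fibers; shrinking $W$ to avoid those, we get $\tau(X'_{s'},\Delta'_{s'},(\ba')^\lambda_{s'}) = \sO_{X'_{s'}}$ for general $s'$, as desired.

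The main obstacle I anticipate is precisely the restriction statement for test ideals over a higher-dimensional $F$-finite regular base with $\Q$-Cartier (not Cartier) $K_X+\Delta$: the clean restriction theorems in the literature are stated either for hypersurfaces or under a Cartier hypothesis, so I would need the relative test ideal machinery of the appendix to interpolate, and I would need to be careful that ``general $s$'' is uniform enough — i.e. that one open set works for the finitely many data $(m, \Delta, \ba, \lambda)$ simultaneously — and that the $p$-prime-index reduction via the perturbation by $\epsilon A_0$ does not interfere with the generic-freeness of the relevant $\Hom$-modules $\Hom_{\sO_{X'}}(F^e_*\sO_{X'}(\lceil(p^e-1)\Delta'\rceil),\sO_{X'})$ over $V'$. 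Everything else is bookkeeping along the lines already carried out in the proof of Theorem \ref{deformation GSFR}.
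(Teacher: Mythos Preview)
Your overall strategy---perturb $\Delta$ so that the Cartier index of $K_X+\Delta$ is prime to $p$ and $\lambda$ is rational, then invoke the relative test ideal machinery of Appendix \ref{relative test ideals}, then finish with a constructibility argument---is exactly the paper's approach. However, your execution has two genuine gaps.

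First, the restriction statement you invoke, namely $\tau(X',\Delta',(\ba')^\lambda)\cdot\sO_{X'_{s'}} = \tau(X'_{s'},\Delta'_{s'},(\ba')^\lambda_{s'})$ for general $s' \in V'$, is not what the appendix provides. Theorem \ref{absolute tau vs fiber} compares $\tau(X_{V^n},\dots)$ with the test ideal of the fiber over a \emph{perfect} point $u:\Spec k \to U$, not over an arbitrary point $s'$. Your detour through $V'$ (which, since $V$ is regular, is literally $V^n$) does not help: the fibers $X'_{s'}$ have residue field $\kappa(s)^{1/p^n}$, still imperfect, so your appeal to Proposition \ref{GSFR} (iv) requires the \emph{same} $n$ to be ``sufficiently divisible'' uniformly in $s$---and the only source of such uniformity is Theorem \ref{absolute tau vs fiber} itself, applied over $V$. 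The paper cuts this circularity by using Theorem \ref{absolute tau vs fiber} directly: it produces a single $n$ and a dense open $U\subseteq V$ such that for every $s\in U$ (via the perfection of $\kappa(s)$), geometric strong $F$-regularity of $(X_s,\Delta_s,\ba_s^\lambda)$ over $\kappa(s)$ is equivalent to $\tau(X_{V^n},h^*\Delta,(\ba\sO_{X_{V^n}})^\lambda)=\sO_{X_{V^n}}$ near $h^{-1}(X_s)$. No intermediate ``restriction to $X'_{s'}$'' statement is needed.

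Second, your final step is incorrect as written: the support of $\sO_{X'}/\tau(X',\Delta',(\ba')^\lambda)$ is closed in $X'$, but since $f$ is not assumed proper its image in $V'$ is only \emph{constructible} (Chevalley), not closed, and it is certainly not contained in finitely many closed fibers once $\dim V>1$. The paper lets $Z\subseteq X_{V^n}$ be the zero locus of the test ideal, notes that $(f\circ h)(Z)$ is constructible and misses $\eta$, so its complement contains a dense open, and concludes.
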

  
  \begin{proof}
  Taking an affine open covering of $X$, we can assume that $X$ is affine, and then there exists an effective $\Q$-Weil divisor $D$ on $X$ such that $D \sim K_X+\Delta$.
  Since $(X_\eta, \Delta_\eta + \epsilon D_\eta, \ba_\eta^\lambda)$ is geometrically strongly $F$-regular over $\kappa(\eta)$ for any sufficiently small $\epsilon>0$,  replacing $\Delta$ by $\Delta + \epsilon D$, 
  we may assume that the index of $K_X+\Delta$ is not divisible by $p$.
  Similarly, we may assume that $\lambda$ is a rational number. 
  
  It follows from Theorem \ref{absolute tau vs fiber} that, possibly after shrinking $V$, there exists an integer $n$ such that $(X_s, \Delta_s, \ba_s^\lambda)$ is geometrically strongly $F$-regular over $\kappa(s)$ for every point $s \in V$ if and only if $\tau(X_{V^{n}}, h^* \Delta, (\ba \sO_{X_{V^{n}}}) ^\lambda)=\sO_{X_{V^{n}}}$ near $h^{-1}(X_s)$, where $h: X_{V^{n}}=X \times_V V^n \to X$ is the first projection. 
  Let $Z \subseteq X_{V^n}$ be the closed subscheme defined by the ideal $\tau(X_{V^{n}}, h^* \Delta, (\ba \sO_{X_{V^{n}}})^\lambda)$.
  By Chevalley's theorem on constructible sets, $(f\circ h)(Z) \subseteq V$ is a constructible set.
  Since the complement $V \setminus (f\circ h)(Z)$ is a constructible set containing the generic point $\eta$, it contains a dense open subset $U \subseteq V$.
  By the definition of $Z$, we see that $(X_s, \Delta_s, \ba_s^\lambda)$ is geometrically strongly $F$-regular over $\kappa(s)$ for every point $s \in U$. 
  \end{proof}

We close this section with an example showing the global analog of Corollary \ref{mixed klt proper} does not hold. 
\begin{defn}
Let $X$ be a normal projective variety over a perfect field $k$ and $\Delta$ be an effective $\Q$-Weil divisor on $X$. 
\begin{enumerate}
  \item The pair $(X,\Delta)$ is said to be \textit{log Fano} if $(X,\Delta)$ is klt and $-(K_X+\Delta)$ is ample. We say that $X$ is of \textit{Fano type} if there exists an effective Weil divisor $B$ on $X$ such that $(X,B)$ is a log Fano pair. 
  \item (\textup{\cite{HWY}, \cite{Smi2}}) Suppose that $k$ is of characteristic $p>0$. 
  Then $X$ is said to be \textit{globally $F$-regular} if for every effective Weil divisor $D$ on $X$, there exists an integer $e \ge 1$ such that the composite 
\[
\sO_X \xrightarrow{\varphi^{(e)}_0} F^e_*\sO_X \to F^e_*\sO_X(D)  
\]
splits as an $\sO_X$-module homomorphism, where $\varphi^{(e)}_0$ is defined as in Definition \ref{F-sing def} and $F^e_*\iota$ is the pushforward of the natural inclusion $\iota:\sO_X \to \sO_X(D)$ by the $e$-times iterated Frobenius morphism $F^e:X \to X$. 
\end{enumerate}
\end{defn}

\begin{prop}[\textup{cf.~\cite[Proposition 5.4]{SS}}]\label{Fano klt}
Let $(X,\Delta)$ be a log Fano pair over an algebraically closed field $k$ of characteristic zero and $H$ be an ample Cartier divisor on $X$.
Let $S=\bigoplus_{n \ge 0}H^0(X, \sO_X(nH))$ 
be the section ring of $X$ with respect to $H$ and $\Delta_S$  
be the $\Q$-Weil divisor on $\Spec S$ corresponding to $\Delta$. 
Then $(\Spec S, \Delta_S)$ is klt in the sense of de Fernex-Hacon. 
\end{prop}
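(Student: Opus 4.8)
The plan is to produce an explicit effective $\Q$-Weil divisor $\Gamma$ on $\Spec S$ such that $K_{\Spec S}+\Delta_S+\Gamma$ is $\Q$-Cartier and $(\Spec S,\Delta_S+\Gamma)$ is klt in the usual sense; by definition this is precisely what is required for $(\Spec S,\Delta_S)$ to be klt in the sense of de Fernex-Hacon. Write $Y:=\Spec S$ and let $v\in Y$ be the vertex. Since $H$ is an ample Cartier divisor on the normal variety $X$, the ring $S$ is normal and finitely generated with $\Proj S\cong X$, and the punctured cone $Y\setminus\{v\}$ is the total space of the $\mathbb{G}_m$-torsor attached to $\sO_X(-H)$; in particular the projection $g\colon Y\setminus\{v\}\to X$ is smooth and $\Delta_S$ is the closure of $g^*\Delta$. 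Hence $K_Y+\Delta_S$ is already $\Q$-Cartier on $Y\setminus\{v\}$, and $(Y,\Delta_S)$ is klt there because $(X,\Delta)$ is klt and $g$ is smooth. Thus the only point that has to be examined is the vertex, and it suffices to construct $\Gamma$ as above.

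The construction of $\Gamma$ uses the Fano hypothesis crucially. Since $(X,\Delta)$ is log Fano, $-(K_X+\Delta)$ is an ample $\Q$-Cartier divisor, so there is a rational number $\epsilon>0$ for which $-(K_X+\Delta)-\epsilon H$ is still ample. Because $(X,\Delta)$ is klt, one can then choose an effective $\Q$-Weil divisor $\Gamma_X\sim_{\Q}-(K_X+\Delta)-\epsilon H$ on $X$ such that $(X,\Delta+\Gamma_X)$ is still klt; for instance one may take $\Gamma_X=\frac{1}{N}D$, where $D$ is a general member of the very ample linear system $|N(-(K_X+\Delta)-\epsilon H)|$ for an integer $N\gg0$ divisible enough to clear denominators, generality of $D$ together with smallness of the coefficient $1/N$ ensuring that the klt condition is preserved (cf.\ Remark~\ref{resolution} for the reduction to a single log resolution). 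By construction $K_X+\Delta+\Gamma_X\sim_{\Q}-\epsilon H$.

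It remains to transfer this to the cone. Let $\Gamma$ be the effective $\Q$-Weil divisor on $Y$ corresponding to $\Gamma_X$, i.e.\ the closure of $g^*\Gamma_X$. Since $K_X+\Delta+\Gamma_X\sim_{\Q}-\epsilon H$ is $\Q$-linearly equivalent to a negative rational multiple of $H$, the standard description of the singularities of an affine cone in terms of its polarized base applies: working on the partial resolution $\pi\colon\widetilde Y=\mathrm{Spec}_X\bigl(\bigoplus_{n\ge0}\sO_X(nH)\bigr)\to Y$, the total space of $\sO_X(-H)$ with its contracted section $E\cong X$, one finds that $K_Y+\Delta_S+\Gamma$ is $\Q$-Cartier and that $(Y,\Delta_S+\Gamma)$ is klt, because $(X,\Delta+\Gamma_X)$ is klt and the ``cone coefficient'' $-\epsilon$ is negative. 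This produces the desired $\Gamma$, so $(\Spec S,\Delta_S)$ is klt in the sense of de Fernex-Hacon. (Alternatively, one could argue that the section ring of a log Fano pair with respect to an arbitrary ample divisor is of globally $F$-regular type, in the spirit of \cite{SS}.)

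The main obstacle is making the last step precise in this generality, where $K_X+\Delta$ need not be $\Q$-Cartier and $X$ may have a positive-dimensional singular locus: one must express $K_{\widetilde Y}+\widetilde\Delta+\widetilde\Gamma$ in terms of $\pi^{*}(K_Y+\Delta_S+\Gamma)$ and $E$, use $K_X+\Delta+\Gamma_X\sim_{\Q}-\epsilon H$ to pin down the coefficient of $E$ (hence the $\Q$-Cartierness at $v$), and then deduce $a_E>-1$ from $\epsilon>0$ and $a_F>-1$ for divisors $F$ over $X$ from the klt-ness of $(X,\Delta+\Gamma_X)$. The remaining point, that the general member $D$ taken with coefficient $1/N$ keeps $(X,\Delta+\Gamma_X)$ klt, is a routine Bertini argument on a fixed log resolution of $(X,\Delta)$.
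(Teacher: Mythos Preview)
Your proposal is correct and follows essentially the same route as the paper: choose a small rational $\epsilon>0$ with $-(K_X+\Delta)-\epsilon H$ ample, pick a general $\Gamma_X\in|-(K_X+\Delta)-\epsilon H|_{\Q}$ keeping $(X,\Delta+\Gamma_X)$ klt, and then pass to the cone to conclude that $(\Spec S,\Delta_S+\Gamma)$ is klt in the usual sense, hence $(\Spec S,\Delta_S)$ is klt in the sense of de Fernex--Hacon. The paper does exactly this (with $\Gamma_X$ called $\Delta'$) and simply cites \cite[Proposition~5.4]{SS} for the cone step that you spell out via the partial resolution $\widetilde Y\to Y$; one small slip in your write-up is the remark that ``$K_X+\Delta$ need not be $\Q$-Cartier''---it is, since $(X,\Delta)$ is log Fano and hence klt.
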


\begin{proof}
Take a sufficiently small $\epsilon<<1$ so that $-(K_X+\Delta)-\epsilon H$ is ample. 
By Bertini, we can take $\Delta' \in | -(K_X+\Delta)-\epsilon H |_{\Q}$ so that $(X, \Delta+\Delta')$ is klt.
It then follows from an argument similar to the proof of \cite[Proposition 5.4]{SS} that $(\Spec S, \Delta_S+ \Delta'_S)$ is klt, where $\Delta_S'$ is the $\Q$-Weil divisor on $\Spec S$ corresponding to $\Delta'$. 
In particular, $(\Spec S, \Delta_S)$ is klt in the sense of de Fernex-Hacon. 
\end{proof}

Since globally $F$-regular varieties (resp.~varieties of Fano type) can be viewed as a global analog of strongly $F$-regular singularities (resp.~klt singularities) (see \cite[Proposition 5.3, 5.4]{SS}), 
it is natural to ask whether globally $F$-regular varieties deform to varieties of Fano type.
In the following example, we give a negative answer to this question even if we assume the fibers are $\Q$-Gorenstein.

\begin{eg}
Let $m,n, I, R, t $ and $T$ be as in Example \ref{counter eg local}.
First we observe that $R$ is an $\N$-graded $\Z$-algebra with respect to the grading 
\[
\deg A=nm, \; \deg B=2m, \; \deg C=2mn, \; \deg D=2mn,\; \deg E =2n.
\]

We will show that the generic fiber $\mathcal{Y}_{\eta}$ of a flat projective morphism $\mathcal{Y}:=\Proj R \to T$ is not of Fano type, while $\mathcal{Y}_{\eta}$ is $\Q$-Gorenstein and the fiber $\mathcal{Y}_t$ over the closed point $t \in T$ is $\Q$-Gorenstein globally $F$-regular.

Since the graded ring $R/(3)$ is strongly $F$-regular (see Example \ref{counter eg local}), every Veronese subring of $R/(3)$ is also strongly $F$-regular by \cite[Theorem 3.1 (e)]{HH}. 
Noting that $\mathcal{Y}_t = \Proj R/(3)$, one can pick an ample invertible sheaf $L$ on $\mathcal{Y}_t$ such that the section ring $\bigoplus_{\ell \ge 0} H^0(\mathcal{Y}_t, L^{\otimes \ell})$ is isomorphic to a Veronese subring of $R/(3)$.
It, therefore, follows from \cite[Proposition 5.3]{SS} that $\mathcal{Y}_t$ is globally $F$-regular.
Moreover, $\mathcal{Y}_t$ is $\Q$-Gorenstein by Remark \ref{remark rat} (ii) and Remark \ref{SFR to F-rat}, because $\mathcal{Y}_t$ has only strongly $F$-regular singularities and $\dim \mathcal{Y}_t=2$.
As we have seen in the proof of Corollary \ref{mixed klt local}, the generic fiber $\mathcal{Y}_{\eta}$ is also $\Q$-Gorenstein.

We finally show that $\mathcal{Y}_{\eta}$ is not of Fano type.
Assume to the contrary that $\mathcal{Y}_{\eta}$ is of Fano type, and then so is $\mathcal{Y}_{\eta} \times_{\Spec \Q} \Spec \C$. 
Since $\mathcal{Y}_{\eta} \times_{\Spec \Q} \Spec \C=\Proj (R \otimes_\Z \C)$, by Proposition \ref{Fano klt}, there exists an integer $u \ge 1$ such that the spectrum of the $u$-th Veronese subring of $R \otimes_\Z \C$ is klt in the sense of de Fernex-Hacon.
It follows from an argument used in Example \ref{counter eg local} that the factor ring $R^{(u)}/(p)$ of the $u$-th Veronese subring $R^{(u)}$ of $R$ modulo a general prime $p$ is strongly $F$-regular. 
We may assume that $p$ is large enough so that $p>3$ and $u$ is not divisible by $p$.
Then the extension $R^{(u)}/(p) \subseteq R/(p)$ is \'etale in codimension one, and therefore $R/(p)$ is strongly $F$-regular by \cite[Theorem 2.7]{Wat}, which contradicts \cite[Theorem 1.1]{Sin}. 
Thus, $\mathcal{Y}_{\eta}$ is not of Fano type.
\end{eg}

\appendix
\section{Relative test ideals for triples}\label{relative test ideals}

In this appendix, we generalize the theory of relative test ideals, introduced in \cite{PSZ} for pairs, to the case of triples. 
In order to obtain a stabilization result (Proposition \ref{stabilization2}) in this case, which is a bit more subtle than the case of pairs (\cite[Lemma 4.2]{PSZ}), we use an argument similar to the proof of \cite[Proposition 3.8]{Sat2}. 

Let $R$ be an integral domain of characteristic $p>0$ and $q=p^e$ be a power of $p$.
We fix an algebraic closure $\overline{\mathrm{Frac}(R)}$ of the fractional field $\mathrm{Frac}(R)$ of $R$, and let $R^{1/q}$  
be the ring of $q$-th roots of elements in $R$, that is, 
\[
R^{1/q}  = \{ x \in \overline{\mathrm{Frac}(R)} \mid x^q \in R \}.
\]
Given an $R$-module $M$, the ring isomorphism $R^{1/q} \xrightarrow{\sim} R$ sending $x$ to $x^q$ induces an $R^{1/q}$-module structure on $M$, which is denoted by $M^{1/q}$. 
$R^{1/q}$-modules are considered as $R$-modules via the natural inclusion $R \hookrightarrow R^{1/q}$. 
When $M^{1/q}$ is regarded as an $R$-module in this way, it is nothing but the push-forward $F^e_* M$ of $M$ by the $e$-th iterated Frobenius morphism $F^e: \Spec R \to \Spec R$. 

\begin{lem}\label{projection formula}
Let $M$ be a module over an integral domain $R$ of characteristic $p>0$. 
\begin{enumerate}[label=$(\arabic*)$]
\item For an invertible $R$-module $L$, we have an isomorphism 
\[
 L \otimes_R M^{1/q} \cong (L^{\otimes q} \otimes_R M)^{1/q}
\] of $R^{1/q}$-modules.
\item For an ideal $\ba \subseteq R$, we have the equality 
\[
\ba \cdot M^{1/q} = (\ba^{[q]} M )^{1/q},
\]
where $\ba^{[q]} \subseteq R$ is the ideal generated by the $q$-th powers of all elements of $\ba$. 
\end{enumerate}
\end{lem}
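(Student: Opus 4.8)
The plan is to deduce part (2) by directly unwinding the module structures involved, and part (1) by writing down an explicit comparison map and checking it is an isomorphism locally. For (2), recall that the $R$-module $M^{1/q}$ is nothing but $F^e_*M$: for $r\in R$ and an element $x\in M^{1/q}$ with underlying element $x\in M$, one has $r\cdot x=r^q x$ computed in $M$. Hence the subset of $M$ underlying the $R$-submodule $\ba\cdot M^{1/q}\subseteq M^{1/q}$ is the $R$-linear span of $\{ a^q x : a\in\ba,\ x\in M \}$, which is exactly $\ba^{[q]}M$ by the definition of the Frobenius power $\ba^{[q]}$. On the other hand $\ba^{[q]}M$ is also the subset of $M$ underlying $(\ba^{[q]}M)^{1/q}$, and both submodules carry the $R^{1/q}$-module structure restricted from $M^{1/q}$; so they coincide. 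I expect no difficulty in this part.

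For (1), I would define an $R^{1/q}$-linear map $\Phi\colon L\otimes_R M^{1/q}\to (L^{\otimes q}\otimes_R M)^{1/q}$ by $\ell\otimes x\mapsto \ell^{\otimes q}\otimes x$ on pure tensors. That this rule is $R$-balanced is formal: $(r\ell)^{\otimes q}=r^q\,\ell^{\otimes q}$, and moving $r^q$ across the tensor is precisely the relation $r\ell\otimes x=\ell\otimes rx$ in the source, since $rx=r^q x$ in $M^{1/q}$; and $R^{1/q}$-linearity is likewise immediate once one observes that the divided action acts through the $M$-factor on both sides. The one point that is not purely formal — and the step I expect to be the main obstacle — is additivity of $\ell\mapsto\ell^{\otimes q}$, which is what makes $\Phi$ well defined. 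This genuinely uses that $L$ is invertible and that we are in characteristic $p$: Zariski-locally on $\Spec R$ the module $L$ is free of rank one on a generator $u$, and there $(\ell_1+\ell_2)^{\otimes q}=(f_1+f_2)^q\,u^{\otimes q}=(f_1^q+f_2^q)\,u^{\otimes q}=\ell_1^{\otimes q}+\ell_2^{\otimes q}$ by the Frobenius identity, so additivity holds after restricting to an affine open cover and hence on $R$ itself. Finally, to see $\Phi$ is an isomorphism it suffices to check this on the same cover: over an open set trivializing $L$ both sides are canonically identified with $M^{1/q}$ and $\Phi$ becomes the identity, and a map of modules that is an isomorphism locally on $\Spec R$ is an isomorphism.

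A tidier but essentially equivalent route for (1) is to recognize it as an instance of the projection formula $F^e_*(F^{e*}L\otimes_R M)\cong L\otimes_R F^e_*M$, valid since $L$ is finite locally free, combined with the standard identification $F^{e*}L\cong L^{\otimes q}$ of the Frobenius pullback of a line bundle; on that route the only thing requiring care is, once again, that these isomorphisms respect the $R^{1/q}$-module structures, that is, keeping track of which tensor factor the divided Frobenius action acts on.
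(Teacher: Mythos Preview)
Your proposal is correct. The ``tidier route'' you describe at the end---projection formula for $F^e_*$ together with the identification $(F^e)^*L\cong L^{\otimes q}$, followed by a check of $R^{1/q}$-linearity---is exactly the paper's proof of (1), and for (2) the paper simply says the assertion is obvious, which your unwinding spells out. Your first route for (1), writing down the explicit map $\ell\otimes x\mapsto \ell^{\otimes q}\otimes x$ and verifying well-definedness and bijectivity locally, is a more hands-on variant of the same argument: it makes transparent exactly where invertibility of $L$ and characteristic $p$ enter (namely in the additivity of $\ell\mapsto\ell^{\otimes q}$), at the cost of a few more lines. One small phrasing point: rather than saying ``additivity holds on an affine cover and hence on $R$ itself,'' it is cleaner to note that the identity $(\ell_1+\ell_2)^{\otimes q}=\ell_1^{\otimes q}+\ell_2^{\otimes q}$ is an equality of elements in the $R$-module $L^{\otimes q}$, and such an equality can be checked after localizing at every prime.
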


\begin{proof}
Since $(F^e)^* L \cong L^{\otimes q}$ as $R$-modules, it follows from the projection formula that there exists an $R$-module isomorphism
\[
f: L \otimes_R M^{1/q} \xrightarrow{\sim} ( (F^e)^* L \otimes_R M)^{1/q}  \xrightarrow{\sim} (L^{\otimes q} \otimes_R M)^{1/q}.
\]
It is straightforward to verify that $f$ is an $R^{1/q}$-module homomorphism, which shows (1). 
The assertion (2) is obvious.
\end{proof}

Let $N$ be an invertible $R$-module and $\gamma: N^{1/q} \to R$ be an $R$-module homomorphism.
For each integer $n \ge 0$, we set $N^{(n)} : = N^{\otimes \frac{q^n-1}{q-1}}$ and $\gamma^n : (N^{(n)})^{1/q^n} \to R$ denotes the $R$-module homomorphism defined inductively by the following composite: 
\begin{eqnarray*}
\gamma^n: (N^{(n)})^{1/q^n} 
=(((N^{(n-1)})^{\otimes q} \otimes_R N)^{1/q})^{1/q^{n-1}} 
& \xrightarrow{\hspace*{1.85em} \sim \hspace*{1.85em}} & (N^{(n-1)} \otimes_R N^{1/q})^{1/q^{n-1}} \\
& \xrightarrow{(\mathrm{id} \otimes \gamma)^{1/q^{n-1}}} & (N^{(n-1)})^{1/q^{n-1}}\\ 
& \xrightarrow{\hspace*{1.35em} \gamma^{n-1} \hspace*{1.35em}} & R. 
\end{eqnarray*}
From now on, by abuse of notation, the map $(N^{(n)})^{1/q^n} \to (N^{(n-1)})^{1/q^{n-1}}$ is also denoted by $\gamma$. 

\begin{defn}\label{defn absolute tau}
Suppose that $R$ is an integral domain of characteristic $p>0$ and $\gamma : N^{1/q} \to R$ is an $R$-module homomorphism, where $N$ is an invertible $R$-module and $q=p^e$ is a power of $p$. 
Let $I$ and $\ba$ be nonzero ideals of $R$ and $\lambda > 0$ be a real number.
The \textit{test ideal} $\tau(X, \gamma I , \ba^{\lambda})$ of $(X:= \Spec R, \gamma, \ba^{\lambda})$ with respect to $I$ is then defined as 
\[
\tau(X, \gamma I , \ba^{\lambda}) =\sum_{i \ge 0} \gamma^i( ( \ba^{\lceil q^i \lambda \rceil }I N^{(i)})^{1/q^i}) \subseteq R.
\]
\end{defn}

\begin{eg}\label{phi and divisor}
With notation as above, we suppose in addition that $R$ is an $F$-finite normal domain with dualizing complex $\omega^{\bullet}_R$ such that $F^! \omega_R^{\bullet} \cong \omega_R^{\bullet}$ and there exists an effective $\Q$-Weil divisor $\Delta$ on $X:=\Spec R$ such that $(q-1)(K_X+\Delta)$ is Cartier and $I \subseteq \tau(X,\Delta)$.
If $N = \sO_X((1-q)(K_X+\Delta))$ and $\gamma$ is obtained from the equivalence relation $(\star)$ in \cite[Paragraphs after Definition 2.4]{BSTZ} (see also \cite[Theorem 3.11]{Sch2}), then 
\[
\tau(X, \gamma I, \ba^{\lambda}) = \tau(X, \Delta, \ba^{\lambda}).
\]
Indeed, after localization, we may assume that $R$ is local and $N =R$.
Since $\gamma^n$ is a generator of the free $F^{en}R$-module 
$\Hom_R(F^{en}_*\sO_{X}((q^n-1)\Delta), R)$ 
of rank one, the assertion follows from Remark \ref{test element remark} and \cite[Proposition 4.6]{Sch3}.
\end{eg}

Let $f: A \to R$ be a ring homomorphism of integral domains of characteristic $p>0$.
We write $R_{A^{1/q}} : = R \otimes_A A^{1/q}$, and note that the inclusion $R \to R^{1/q}$ and the natural morphism $f^{1/q} : A^{1/q} \to R^{1/q}$ induces the natural morphism $R_{A^{1/q}} \to R^{1/q}$.
\[
\xymatrix{
R  \ar[r] & R_{A^{1/q}} \ar[r] & R^{1/q} \\
A  \ar[r] \ar^-{f}[u] & A^{1/q} \ar[u] \ar_-{f^{1/q}}[ru] & 
}\]
Given an $R^{1/q}$-module $M$, we view $M$ as an $R_{A^{1/q}}$-module via this ring homomorphism. 

\begin{setting}\label{relative test setting}
Suppose that $A$ is a Noetherian integral domain of characteristic $p>0$, $R$ is an integral domain flat and essentially of finite type over $A$, $\lambda > 0$ is a real number, and $I, \ba \subseteq R$ are nonzero ideals.
Let $\phi : L^{1/q} \to R_{A^{1/q}}$ be an $R_{A^{1/q}}$-module homomorphism, where $L$ is an invertible $R$-module and $q=p^e$ is a power of $p$. 
For each integer $n \ge 0$, we set $B_n : = R_{A^{1/q^n}}$, $L^{(n)} : = L^{\otimes \frac{q^n-1}{q-1}}$, $X: = \Spec R$, and $V : = \Spec A$. 
\end{setting}

For each integer $n>0$, we define a $B_n$-homomorphism $\tilde{\phi}_n : (L \otimes_R B_{n-1})^{1/q} \to B_n$ by
\[
\tilde{\phi}_n : (L \otimes_R B_{n-1})^{1/q} \cong L^{1/q} \otimes_{A^{1/q}} A^{1/q^{n}} \xrightarrow{\phi \otimes \mathrm{id}} B_1 \otimes_{A^{1/q}} A^{1/q^{n}} \cong B_n.
\]
Moreover, we define a $B_n$-homomorphism $\phi^n : (L^{(n)})^{1/q^n} \to B_n$ inductively as follows:
\[
\phi^n : (L^{(n)})^{1/q^n} \cong  (L \otimes_R (L^{(n-1)})^{1/q^{n-1}})^{1/q} \xrightarrow{(\mathrm{id} \otimes \phi^{n-1})^{1/q}} (L \otimes_R B_{n-1})^{1/q} \xrightarrow{\tilde{\phi}_n} B_n.
\]
For integers $n \ge i \ge 0$, let $a_{i,n} : B_i \to B_n$  
be the ring homomorphism induced by the natural inclusion $A^{1/q^i} \hookrightarrow A^{1/q^n}$.

\begin{defn}[$\textup{cf. \cite[Definition 4.3]{PSZ}}$]
With notation as in Setting \ref{relative test setting}, the \emph{$n$-th limiting relative test ideal} $\tau_n(X/V, \phi I, \ba^{\lambda})$ of $(X/V, \phi, \ba^{\lambda})$ with respect to $I$ is defined as
\[
\tau_n(X/V, \phi I, \ba^{\lambda}) : = \sum_{i=0}^n \phi^i( (\ba^{\lceil q^i \lambda \rceil} I L^{(i)})^{1/q^i}) B_n \subseteq B_n.
\]
\end{defn}

\begin{lem}\label{phi(tau)}
With notation as in Setting \ref{relative test setting}, we have 
\[
\tilde{\phi}_{n+1}((L \otimes_R \tau_{n}(X/V, \phi I, \ba^{\lambda}))^{1/q}) + (\ba^{\lceil \lambda/q \rceil} I) B_{n+1} = \tau_{n+1}(X/V, \phi I, \ba^{\lambda/q}).
\]
\end{lem}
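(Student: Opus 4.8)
The plan is to expand both sides as finite sums of ``elementary pieces'' and match them term by term. Unwinding the definition,
\[
\tau_{n+1}(X/V, \phi I, \ba^{\lambda/q}) = \sum_{j=0}^{n+1} \phi^j\bigl((\ba^{\lceil q^j\lambda/q\rceil} I L^{(j)})^{1/q^j}\bigr) B_{n+1},
\]
and since $L^{(0)}=R$, $\phi^0=\mathrm{id}$ and $\lceil q^0\cdot\lambda/q\rceil=\lceil\lambda/q\rceil$, the $j=0$ summand equals $(\ba^{\lceil\lambda/q\rceil}I)B_{n+1}$, which is the second term on the left-hand side. So the task reduces to identifying $\tilde{\phi}_{n+1}\bigl((L\otimes_R\tau_n(X/V,\phi I,\ba^\lambda))^{1/q}\bigr)$ with $\sum_{j=1}^{n+1}\phi^j\bigl((\ba^{\lceil q^j\lambda/q\rceil}IL^{(j)})^{1/q^j}\bigr)B_{n+1}$.

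Writing $\tau_n(X/V,\phi I,\ba^\lambda)=\sum_{i=0}^{n} M_iB_n$ with $M_i:=\phi^i\bigl((\ba^{\lceil q^i\lambda\rceil}IL^{(i)})^{1/q^i}\bigr)$, and using that $\tilde{\phi}_{n+1}$ is $B_{n+1}$-linear while $(-)^{1/q}$ and $L\otimes_R(-)$ commute with sums of submodules (the latter because $L$ is invertible, hence flat), I would reduce to proving, for each $0\le i\le n$,
\[
\tilde{\phi}_{n+1}\bigl((L\otimes_R M_iB_n)^{1/q}\bigr)=\phi^{i+1}\bigl((\ba^{\lceil q^{i+1}\lambda/q\rceil}IL^{(i+1)})^{1/q^{i+1}}\bigr)B_{n+1}.
\]
I would first record the ceiling identity $\lceil q^{i+1}\cdot\lambda/q\rceil=\lceil q^i\lambda\rceil$, so the $\ba$-exponent on the right is the one occurring in $M_i$. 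Using $L^{(i+1)}\cong L^{\otimes q^i}\otimes_R L^{(i)}$ and repeated applications of Lemma \ref{projection formula}, I would identify, under the canonical isomorphism $(L^{(i+1)})^{1/q^{i+1}}\cong(L\otimes_R(L^{(i)})^{1/q^i})^{1/q}$ from the definition of $\phi^{i+1}$, the submodule $(\ba^{\lceil q^i\lambda\rceil}IL^{(i+1)})^{1/q^{i+1}}$ with $(L\otimes_R(\ba^{\lceil q^i\lambda\rceil}IL^{(i)})^{1/q^i})^{1/q}$; feeding this into $\phi^{i+1}=\tilde{\phi}_{i+1}\circ(\mathrm{id}\otimes\phi^i)^{1/q}$ rewrites the right-hand side as $\tilde{\phi}_{i+1}\bigl((L\otimes_R M_i)^{1/q}\bigr)B_{n+1}$. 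Finally, $\tilde{\phi}_{n+1}$ is by construction the base change of $\tilde{\phi}_{i+1}$ along $A^{1/q^{i+1}}\hookrightarrow A^{1/q^{n+1}}$ (both arising from $\phi\otimes\mathrm{id}$), so $a_{i+1,n+1}\circ\tilde{\phi}_{i+1}=\tilde{\phi}_{n+1}\circ(\text{the canonical map }(L\otimes_R B_i)^{1/q}\to(L\otimes_R B_n)^{1/q})$; the flatness of $R$ over $A$ in Setting \ref{relative test setting} guarantees that this extension of scalars is compatible with $L\otimes_R(-)$ and with taking $q$-th roots, whence the displayed equality. Summing over $i$ then gives the lemma.

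The step I expect to be the main obstacle is this last reduction: one must verify that applying $\tilde{\phi}_{n+1}$ to the $q$-th roots of $L\otimes_R M_iB_n$ produces \emph{exactly} the $(i+1)$-st summand of $\tau_{n+1}(X/V,\phi I,\ba^{\lambda/q})$ inside $B_{n+1}$ — with the correct $\ba$-exponent, the correct twist by $L^{(i+1)}$, and the correct image under $a_{i+1,n+1}$. Concretely this is a compatible diagram chase through the isomorphisms $(L^{(i+1)})^{1/q^{i+1}}\cong(L\otimes_R(L^{(i)})^{1/q^i})^{1/q}$, the projection-formula identities of Lemma \ref{projection formula}, and the base-change description of the maps $\tilde{\phi}_j$, keeping the ideal $\ba^{\lceil q^i\lambda\rceil}I$ and the module $L^{(i)}$ synchronized throughout. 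No single step is deep, but — as noted just before the statement — carrying the ideal factor through makes the stabilization a little more delicate than for pairs, so one cannot merely quote \cite[Lemma 4.2]{PSZ}.
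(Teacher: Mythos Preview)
Your proposal is correct and follows essentially the same argument as the paper. The paper's proof is more terse: it records the commutative square expressing that $\tilde{\phi}_{n+1}$ is the base change of $\tilde{\phi}_{i+1}$ along $a_{i,n}$, and then writes the chain of equalities $\tilde{\phi}_{n+1}((L\otimes_R M_iB_n)^{1/q})=\tilde{\phi}_{i+1}((L\otimes_R M_i)^{1/q})B_{n+1}=\phi^{i+1}((\ba^{\lceil q^i\lambda\rceil}IL^{(i+1)})^{1/q^{i+1}})B_{n+1}$ together with the ceiling identity, exactly as you outline; you have simply unpacked each of these steps in more detail.
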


\begin{proof}
Note that we have the following commutative diagram:
\[
\xymatrix{
(L \otimes_R B_i)^{1/q}  \ar^-{\tilde{\phi}_{i+1}}[r]\ar_-{(\mathrm{id} \otimes a_{i,n})^{1/q}}[d] & B_{i+1} \ar^{a_{i+1, n+1}}[d]\\
(L \otimes_R B_n)^{1/q} \ar^-{\tilde{\phi}_{n+1}}[r] &  B_{n+1}.
}\]
Therefore, 
\begin{align*}
\tilde{\phi}_{n+1}( ( L \otimes_R \phi^i((\ba^{\lceil q^i \lambda \rceil} I L^{(i)})^{1/q^i}) B_{n})^{1/q}) 
&= \tilde{\phi}_{i+1}((L \otimes_R \phi^i((\ba^{\lceil q^i \lambda \rceil} I L^{(i)})^{1/q^i}))^{1/q}) B_{n+1} \\
&= \phi^{i+1}( (\ba^{\lceil q^i \lambda \rceil} I L^{(i+1)})^{1/q^{i+1}}) B_{n+1} \\
&= \phi^{i+1}( (\ba^{\lceil q^{i+1} (\lambda/q) \rceil} I L^{(i+1)})^{1/q^{i+1}}) B_{n+1},
\end{align*}
which implies the assertion.
\end{proof}

The notation $\mu(\ba)$ denotes the minimal number of generators for the ideal $\ba$.

\begin{lem}\label{Skoda}
With notation as in Setting \ref{relative test setting},  assume that $\lambda > \mu(\ba)-1$. 
Then 
\[
\tau_n(X/V, \phi I , \ba^{\lambda})\ba =\tau_n(X/V, \phi I , \ba^{\lambda+1}).
\] 
\end{lem}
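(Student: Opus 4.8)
The plan is to prove the two inclusions separately, with the inclusion $\supseteq$ being the essentially trivial one. Since $\ba^{\lambda+1} \subseteq \ba^{\lambda}\cdot\ba$ in the sense that $\ba^{\lceil q^i(\lambda+1)\rceil}\subseteq \ba^{\lceil q^i\lambda\rceil}\ba^{q^i}$ — here one should be a little careful and check $\lceil q^i(\lambda+1)\rceil = \lceil q^i\lambda\rceil + q^i$ exactly, then bound $\ba^{q^i}\subseteq \ba$ since $q^i\ge 1$ — one gets $\phi^i((\ba^{\lceil q^i(\lambda+1)\rceil}IL^{(i)})^{1/q^i})\subseteq \phi^i((\ba\cdot\ba^{\lceil q^i\lambda\rceil}IL^{(i)})^{1/q^i})$. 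Using Lemma \ref{projection formula}(2) in the form $\ba^{[q^i]}M^{1/q^i}=(\ba M)^{1/q^i}$ together with $\ba^{[q^i]}\subseteq \ba$, one pulls a factor of $\ba$ outside $\phi^i$ (the maps $\phi^i$ are $B_n$-linear, hence $R$-linear), giving $\phi^i((\ba^{\lceil q^i(\lambda+1)\rceil}IL^{(i)})^{1/q^i})\subseteq \ba\cdot\phi^i((\ba^{\lceil q^i\lambda\rceil}IL^{(i)})^{1/q^i})$. Summing over $i=0,\dots,n$ yields $\tau_n(X/V,\phi I,\ba^{\lambda+1})\subseteq \tau_n(X/V,\phi I,\ba^{\lambda})\ba$.

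For the reverse inclusion $\tau_n(X/V,\phi I,\ba^{\lambda})\ba\subseteq\tau_n(X/V,\phi I,\ba^{\lambda+1})$, the idea is the classical Skoda argument adapted to the relative setting. Write $\ba=(x_1,\dots,x_r)$ with $r=\mu(\ba)$. The hypothesis $\lambda > r-1$, i.e.\ $\lambda+1>r$, is exactly what makes the pigeonhole step work: for each $i\ge 0$ and each monomial $x^{\alpha}$ of degree $\lceil q^i(\lambda+1)\rceil$ in the $x_j$, since $\lceil q^i(\lambda+1)\rceil > q^i\cdot r$ (using $\lambda+1>r$ and $q^i\ge 1$... one must check $\lceil q^i(\lambda+1)\rceil \ge q^i\lfloor\lambda+1\rfloor+1 > q^i r$ carefully, or more simply $\lceil q^i(\lambda+1)\rceil\ge q^i(\lambda+1)>q^ir\ge q^i(r-1)+q^i$), some exponent $\alpha_j$ satisfies $\alpha_j\ge q^i$, so $x^{\alpha}\in x_j^{q^i}\cdot\ba^{\lceil q^i\lambda\rceil}$ once one verifies the remaining degree $\lceil q^i(\lambda+1)\rceil - q^i \ge \lceil q^i\lambda\rceil$. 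Hence $\ba^{\lceil q^i(\lambda+1)\rceil}\subseteq \sum_{j=1}^r x_j^{q^i}\ba^{\lceil q^i\lambda\rceil} = \ba^{[q^i]}\ba^{\lceil q^i\lambda\rceil}$. Applying Lemma \ref{projection formula}(2), $(\ba^{\lceil q^i(\lambda+1)\rceil}IL^{(i)})^{1/q^i}\subseteq \ba\cdot(\ba^{\lceil q^i\lambda\rceil}IL^{(i)})^{1/q^i}$, and pushing through the $R$-linear map $\phi^i$ gives $\phi^i((\ba^{\lceil q^i(\lambda+1)\rceil}IL^{(i)})^{1/q^i})\subseteq \ba\cdot\phi^i((\ba^{\lceil q^i\lambda\rceil}IL^{(i)})^{1/q^i})$. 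Summing over $i$, $\tau_n(X/V,\phi I,\ba^{\lambda+1})\subseteq \ba\cdot\tau_n(X/V,\phi I,\ba^{\lambda})$, completing the other inclusion.

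Combining the two inclusions gives the claimed equality. The main obstacle — really the only subtlety — is getting the ceiling-function bookkeeping exactly right: one needs the clean identity that $\lceil q^i(\lambda+1)\rceil=\lceil q^i\lambda\rceil+q^i$ (which holds since $q^i$ is an integer), and then the combinatorial claim that any monomial of $x_1,\dots,x_r$ of total degree $> q^i(r-1)$ is divisible by $x_j^{q^i}$ for some $j$, which is the elementary pigeonhole fact $\sum_j \alpha_j > q^i(r-1)$ forces $\max_j\alpha_j\ge q^i$. With $\lambda>r-1$ we indeed have $\lceil q^i(\lambda+1)\rceil = \lceil q^i\lambda\rceil + q^i > q^i\lambda \ge q^i(r-1) \cdot 1$... to be safe one checks $\lceil q^i\lambda\rceil \ge q^i\lambda > q^i(\mu(\ba)-1)$ directly. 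Everything else is an application of the projection formula from Lemma \ref{projection formula} and $R$-linearity of the iterated maps $\phi^i$, exactly as in the proof of the pair case.
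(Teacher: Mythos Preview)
Your argument has the two inclusions swapped, and as written you end up proving the same inclusion twice while never addressing the other. Concretely: both of your paragraphs conclude with $\tau_n(X/V,\phi I,\ba^{\lambda+1})\subseteq \ba\cdot\tau_n(X/V,\phi I,\ba^{\lambda})$. Your second paragraph proves this correctly via the pigeonhole step $\ba^{q^i+\lceil q^i\lambda\rceil}\subseteq \ba^{[q^i]}\ba^{\lceil q^i\lambda\rceil}$ (this is the substantive direction, and your argument for it is fine). But your first paragraph, which you label as the ``trivial'' case, attempts the same inclusion with a flawed argument: from $\ba^{\lceil q^i(\lambda+1)\rceil}\subseteq \ba\cdot\ba^{\lceil q^i\lambda\rceil}$ you cannot pull the factor of $\ba$ outside the $1/q^i$-th root. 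The projection formula reads $\ba\cdot M^{1/q^i}=(\ba^{[q^i]}M)^{1/q^i}$, and since $\ba^{[q^i]}\subseteq \ba$ this only gives $\ba\cdot M^{1/q^i}\subseteq(\ba M)^{1/q^i}$, the wrong direction for what you want. (Also, your restatement of Lemma~\ref{projection formula}(2) as ``$\ba^{[q^i]}M^{1/q^i}=(\ba M)^{1/q^i}$'' has the roles of $\ba$ and $\ba^{[q^i]}$ interchanged.)

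The inclusion you never establish, $\ba\cdot\tau_n(X/V,\phi I,\ba^{\lambda})\subseteq\tau_n(X/V,\phi I,\ba^{\lambda+1})$, is in fact the genuinely easy one and holds with no hypothesis on $\lambda$: by $R$-linearity and the projection formula, $\ba\cdot\phi^i\bigl((\ba^{\lceil q^i\lambda\rceil}IL^{(i)})^{1/q^i}\bigr)=\phi^i\bigl((\ba^{[q^i]}\ba^{\lceil q^i\lambda\rceil}IL^{(i)})^{1/q^i}\bigr)$, and then $\ba^{[q^i]}\subseteq\ba^{q^i}$ gives $\ba^{[q^i]}\ba^{\lceil q^i\lambda\rceil}\subseteq\ba^{q^i+\lceil q^i\lambda\rceil}=\ba^{\lceil q^i(\lambda+1)\rceil}$. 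The paper packages both directions into a single chain of equalities by invoking the identity $\ba^{[q^i]}\ba^{\lceil q^i\lambda\rceil}=\ba^{q^i+\lceil q^i\lambda\rceil}$ (their citation \cite[Lemma~2.11]{Sat2}), which is exactly the pigeonhole fact you prove by hand; so once you untangle the directions, your approach coincides with theirs.
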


\begin{proof}
Since $a_{i, n}: B_i \to B_n$ and $\phi^i$ are $R$-homomorphisms, 
\begin{align*}
\tau_n(X/V, \phi I , \ba^{\lambda}) \ba 
&= \sum_{i=0}^n \phi^i ( \ba (\ba^{\lceil q^i \lambda \rceil} I L^{(i)})^{1/q^i}) B_n \\
&= \sum_{i=0}^n \phi^i ( ( \ba^{[q^i]} \ba^{\lceil q^i \lambda \rceil} I L^{(i)})^{1/q^i}) B_n \\
&= \sum_{i=0}^n \phi^i ( ( \ba^{q^i + \lceil q^i \lambda \rceil} I L^{(i)})^{1/q^i}) B_n\\
&=\tau_n(X/V, \phi I , \ba^{\lambda+1}), 
\end{align*}
where the third equality follows from \cite[Lemma 2.11]{Sat2} because $\lambda > \mu(\ba)-1$.
\end{proof}

\begin{prop}\label{stabilization1}
With notation as in Setting \ref{relative test setting}, assume that $\lambda> \mu(\ba) -1$ and $(q-1)\lambda$ is an integer.
If the equality  
\[
\tau_{n-1}(X/V, \phi I, \ba^{\lambda}) B_n = \tau_n(X/V, \phi I, \ba^{\lambda})
\] 
holds for some integer $n \ge 1$, then
\[
\tau_{m-1}(X/V, \phi I, \ba^{\lambda}) B_m = \tau_m(X/V, \phi I, \ba^{\lambda})
\]
holds for every integer $m>n$. 
\end{prop}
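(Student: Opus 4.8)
The plan is the usual ``once the chain stabilizes it stays stabilized'' argument, driven by Lemmas \ref{phi(tau)} and \ref{Skoda}. First I would reduce to $m=n+1$: it suffices to show that, for every integer $k\ge 1$, the equality $\tau_{k-1}(X/V,\phi I,\ba^{\lambda})B_k=\tau_k(X/V,\phi I,\ba^{\lambda})$ forces $\tau_{k}(X/V,\phi I,\ba^{\lambda})B_{k+1}=\tau_{k+1}(X/V,\phi I,\ba^{\lambda})$; the general statement then follows by induction on $m$, applying this implication successively with $k=n,n+1,\dots,m-1$ (the conclusion at each stage being the hypothesis of the next). So fix such a $k$ (which I rename $n$) and assume $\tau_{n-1}(X/V,\phi I,\ba^{\lambda})B_n=\tau_n(X/V,\phi I,\ba^{\lambda})$.

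The trick is to rescale the exponent $\lambda$ to $q\lambda$. Since $\lambda>\mu(\ba)-1\ge 0$ and $(q-1)\lambda$ is a positive integer, Lemma \ref{Skoda} may be applied $(q-1)\lambda$ times starting from exponent $\lambda$ (every intermediate exponent still exceeds $\mu(\ba)-1$), giving, for $j=n-1$ and $j=n$,
\[
\tau_j(X/V,\phi I,\ba^{q\lambda})=\ba^{(q-1)\lambda}\,\tau_j(X/V,\phi I,\ba^{\lambda}).
\]
In particular, multiplying the stabilization hypothesis by the ideal $\ba^{(q-1)\lambda}$ upgrades it to $\tau_{n-1}(X/V,\phi I,\ba^{q\lambda})B_n=\tau_n(X/V,\phi I,\ba^{q\lambda})$. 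On the other hand, the identity in Lemma \ref{phi(tau)} holds for an arbitrary positive real exponent, so using it with $\lambda$ replaced by $q\lambda$ (note $\lceil q\lambda/q\rceil=\lceil\lambda\rceil$), once at level $n$ and once at level $n-1$, yields
\begin{align*}
\tau_{n+1}(X/V,\phi I,\ba^{\lambda})&=\tilde{\phi}_{n+1}\big((L\otimes_R\tau_n(X/V,\phi I,\ba^{q\lambda}))^{1/q}\big)+\ba^{\lceil\lambda\rceil}I\,B_{n+1},\\
\tau_{n}(X/V,\phi I,\ba^{\lambda})&=\tilde{\phi}_{n}\big((L\otimes_R\tau_{n-1}(X/V,\phi I,\ba^{q\lambda}))^{1/q}\big)+\ba^{\lceil\lambda\rceil}I\,B_{n}.
\end{align*}

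To conclude I would extend the second identity along $B_n\hookrightarrow B_{n+1}$ and compare it with the first. The one nonformal ingredient is that the formation of $\tilde{\phi}$ commutes with this base change, i.e.\ $\tilde{\phi}_n((L\otimes_R K)^{1/q})\,B_{n+1}=\tilde{\phi}_{n+1}((L\otimes_R KB_n)^{1/q})$ for every ideal $K\subseteq B_{n-1}$; this is precisely the commutative square appearing in the proof of Lemma \ref{phi(tau)} (taken with $i=n-1$) together with the flatness of the invertible module $L$. Applying it with $K=\tau_{n-1}(X/V,\phi I,\ba^{q\lambda})$ and using $KB_n=\tau_n(X/V,\phi I,\ba^{q\lambda})$, the $\tilde{\phi}_n$-term of the second identity becomes $\tilde{\phi}_{n+1}((L\otimes_R\tau_n(X/V,\phi I,\ba^{q\lambda}))^{1/q})$ after extension to $B_{n+1}$, while the $\ba^{\lceil\lambda\rceil}I$-term becomes $\ba^{\lceil\lambda\rceil}I\,B_{n+1}$, so that
\[
\tau_{n}(X/V,\phi I,\ba^{\lambda})B_{n+1}=\tilde{\phi}_{n+1}\big((L\otimes_R\tau_n(X/V,\phi I,\ba^{q\lambda}))^{1/q}\big)+\ba^{\lceil\lambda\rceil}I\,B_{n+1}=\tau_{n+1}(X/V,\phi I,\ba^{\lambda}),
\]
which is exactly the single-step implication we needed. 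I expect the main obstacle to be this extension-of-scalars compatibility of $\tilde{\phi}$, together with the bookkeeping that justifies invoking Lemmas \ref{phi(tau)} and \ref{Skoda} with the rescaled exponent $q\lambda$ and with the index shifted down by one; everything else is a purely formal rearrangement.
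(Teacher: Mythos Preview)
Your proof is correct and follows essentially the same route as the paper's. The paper also reduces to the single step $m=n+1$, invokes the commutative square from the proof of Lemma~\ref{phi(tau)} (with $i=n-1$) to obtain
\[
\widetilde{\phi}_n\bigl((L\otimes_R \tau_{n-1}(X/V,\phi I,\ba^{\lambda})\,\ba^{(q-1)\lambda})^{1/q}\bigr)B_{n+1}
=\widetilde{\phi}_{n+1}\bigl((L\otimes_R \tau_n(X/V,\phi I,\ba^{\lambda})\,\ba^{(q-1)\lambda})^{1/q}\bigr),
\]
and then concludes by combining this with Lemmas~\ref{phi(tau)} and~\ref{Skoda}; your write-up simply makes the Skoda rescaling $\tau_j(\ba^{q\lambda})=\ba^{(q-1)\lambda}\tau_j(\ba^{\lambda})$ and the two applications of Lemma~\ref{phi(tau)} at exponent $q\lambda$ explicit, which is exactly what the paper's terse ``combining this with'' is hiding.
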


\begin{proof}
It is enough to check the equality when $m=n+1$.
Since $\tau_{n-1}(X/V, \phi I , \ba^{\lambda}) B_n = \tau_n(X/V, \phi I , \ba^{\lambda})$, the commutative diagram in the proof of Lemma \ref{phi(tau)} yields 
\[
\widetilde{\phi}_n( (L \otimes_R \tau_{n-1}(X/V, \phi I , \ba^{\lambda})\ba^{(q-1)\lambda})^{1/q}) B_{n+1} = \widetilde{\phi}_{n+1}( (L \otimes_R (\tau_n(X/V, \phi I , \ba^{\lambda})\ba^{(q-1)\lambda})^{1/q}).
\]
Combining this with Lemmas \ref{phi(tau)} and \ref{Skoda}, we complete the proof.
\end{proof}

With notation as in Setting \ref{relative test setting}, suppose that $V' : = \Spec A' \to V$ is a morphism from a Noetherian integral affine scheme such that $X' : = X \times_V V'$ is integral.
We set $R ' : = R \otimes_A A'$, $L' : = L \otimes_R R'$, $I' : = I R'$, $\ba' : = \ba R'$, and  $B'_n : = R' \otimes_{A'} {A'}^{1/q^n}$.
Let $\phi': (L')^{1/q} \to B'_1$ be the morphism induced by $\phi$ $($see \cite[Subsection 2.16]{PSZ} for details$)$.

\begin{lem}[$\textup{cf. \cite[Theorem 4.5]{PSZ}}$]\label{relative tau base change}
With notation as above, we have
\[
\tau_n(X'/V', \phi' I', {\ba'}^{\lambda}) = \tau_n(X/V, \phi I, \ba^{\lambda}) B'_n.
\]
\end{lem}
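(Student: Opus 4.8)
The plan is to mirror the proof of \cite[Theorem 4.5]{PSZ}, carrying the extra ideal $\ba^{\lambda}$ alongside the pair data. Recall that
\[
\tau_n(X/V, \phi I, \ba^{\lambda}) = \sum_{i=0}^{n} \phi^i\bigl( (\ba^{\lceil q^i \lambda \rceil} I L^{(i)})^{1/q^i} \bigr) B_n
\]
is a \emph{finite} sum of ideals of $B_n$, and likewise for $X'/V'$ with $\phi'$, $I'$, $\ba'$, $B'_n$. Since extension of ideals commutes with finite sums, and since for $i\le n$ the evident square relating $B_i$, $B_n$, $B'_i$, $B'_n$ commutes, I would reduce to proving, for each $i$ with $0\le i\le n$, the identity
\[
(\phi')^i\bigl( ({\ba'}^{\lceil q^i \lambda \rceil} I' (L')^{(i)})^{1/q^i} \bigr) = \phi^i\bigl( (\ba^{\lceil q^i \lambda \rceil} I L^{(i)})^{1/q^i} \bigr)\, B'_i,
\]
the right-hand side being the extension along $B_i \to B'_i$ of the displayed ideal of $B_i$.

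The relevant base change at level $i$ is $-\otimes_{B_i} B'_i$, which under the identification $B'_i = R'\otimes_{A'}(A')^{1/q^i} = R\otimes_A (A')^{1/q^i}$ is the Frobenius-twisted base change along $A\to A'$; crucially it is \emph{not} assumed flat, which is the whole reason for working with the limiting ideals $\tau_n$. The key compatibilities I would establish are three. First, $(L')^{(i)} = L^{(i)}\otimes_R R'$ and ${\ba'}^{\lceil q^i\lambda\rceil} I' = (\ba^{\lceil q^i\lambda\rceil} I)R'$, since powers and products of an extended ideal are the extensions of the corresponding powers and products. Second, for any $R$-module $M$ there is a natural isomorphism of $B'_i$-modules $M^{1/q^i}\otimes_{B_i} B'_i \cong (M\otimes_R R')^{1/q^i}$, obtained by unwinding $M^{1/q^i}$ (the module $M$ regarded over $R^{1/q^i}$, hence over $A^{1/q^i}$ as the pushforward of $M$ along the $q^i$-power Frobenius of $A$) and $B'_i$ through the Frobenius isomorphisms $A^{1/q^i}\cong A$ and $(A')^{1/q^i}\cong A'$. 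Third, under this isomorphism the map $\phi^i\otimes_{B_i}\mathrm{id}_{B'_i}$ is identified with $(\phi')^i$; this I would prove by induction on $i$ from the definition of $\phi'$ in \cite[Subsection 2.16]{PSZ} and the inductive definitions of $\phi^n$ and $(\phi')^n$ in Setting \ref{relative test setting}, using that the isomorphisms $(L\otimes_R B_{n-1})^{1/q}\cong L^{1/q}\otimes_{A^{1/q}}A^{1/q^n}$ and $B_1\otimes_{A^{1/q}}A^{1/q^n}\cong B_n$, as well as the maps $\mathrm{id}\otimes\phi^{n-1}$, are all compatible with these base changes.

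Granting these, write $M_i := \ba^{\lceil q^i\lambda\rceil} I L^{(i)}\subseteq L^{(i)}$ and let $J_i := \phi^i(M_i^{1/q^i})\subseteq B_i$ be its image. Applying the right exactness of $-\otimes_{B_i} B'_i$ to the surjection $M_i^{1/q^i}\twoheadrightarrow J_i$ followed by $J_i\hookrightarrow B_i$ shows that the image of the composite $M_i^{1/q^i}\otimes_{B_i} B'_i \to B_i\otimes_{B_i} B'_i = B'_i$ is exactly $J_i B'_i$; since $(-)^{1/q^i}$ is exact, this composite is, by the three compatibilities above, identified with the restriction of $(\phi')^i$ to $({\ba'}^{\lceil q^i\lambda\rceil} I'(L')^{(i)})^{1/q^i}$, which yields the displayed identity of the first paragraph. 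Summing over $i$ and extending to $B'_n$ then gives
\[
\tau_n(X'/V', \phi' I', {\ba'}^{\lambda}) = \sum_{i=0}^{n} J_i B'_n = \Bigl(\sum_{i=0}^{n} J_i B_n\Bigr) B'_n = \tau_n(X/V, \phi I, \ba^{\lambda})\, B'_n.
\]

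The step I expect to be the main obstacle is the second and third compatibilities: verifying that the twisted modules $(-)^{1/q^i}$ and the iterated maps $\phi^i$ are transported correctly by the non-flat base change $V'\to V$. This is essentially the content of \cite[Theorem 4.5]{PSZ}, and the one genuinely new ingredient — the factor $\ba^{\lceil q^i\lambda\rceil}$ — is an extended ideal, hence carried along without trouble by the first compatibility.
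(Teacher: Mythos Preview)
Your proposal is correct and follows exactly the approach indicated by the paper, which simply asserts that ``the assertion follows from an argument similar to the proof of \cite[Theorem 4.5]{PSZ}.'' You have spelled out in detail what that argument entails, correctly isolating the only new ingredient---the factor $\ba^{\lceil q^i\lambda\rceil}$---and observing that it is harmless because it is an extended ideal.
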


\begin{proof}
The assertion follows from an argument similar to the proof of \cite[Theorem 4.5]{PSZ}.
\end{proof}

Proposition \ref{stabilization1} with the aid of Lemma \ref{relative tau base change} yields the following stabilization result. 
This strategy of the proof goes back to \cite[Proposition 3.8]{Sat2}. 
\begin{prop}[$\textup{cf. \cite[Lemma 4.2]{PSZ}}$]\label{stabilization2}
With notation as in Setting \ref{relative test setting},  assume that $\lambda> \mu(\ba) -1$ and $(q-1)\lambda$ is an integer.
There exist a dense open subset $U \subseteq V$ and an integer $n_0$ such that for any integer $n \ge n_0$ and any morphism $V' =\Spec A' \to V$ as in Lemma \ref{relative tau base change} whose image is contained in $U$, we have
\[
\tau_{n-1}(X'/V', \phi' I', {\ba'}^{\lambda} ) B'_n  = \tau_n(X'/V', \phi' I', {\ba'}^{\lambda}).
\]
\end{prop}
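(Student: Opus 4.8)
The plan is to establish the stabilization first over the generic point $\eta=\Spec K$ of $V$, where $K:=\mathrm{Frac}(A)$, and then to spread it out, the two main tools being the base-change formula of Lemma~\ref{relative tau base change} and the propagation of Proposition~\ref{stabilization1}. Write $\tau_n:=\tau_n(X/V,\phi I,\ba^{\lambda})\subseteq B_n$; one always has $\tau_{n-1}B_n\subseteq\tau_n$, with equality if and only if $\phi^n((\ba^{\lceil q^n\lambda\rceil}IL^{(n)})^{1/q^n})B_n\subseteq\tau_{n-1}B_n$. By Proposition~\ref{stabilization1}, once this equality holds at some level $n_0$ it holds at every level $n\ge n_0$, and by Lemma~\ref{relative tau base change} the formation of $\tau_n$, hence the stabilization condition itself, commutes with base change $V'\to V$. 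So it suffices to produce a dense open $U\subseteq V$ and an integer $n_0$ with $\tau_{n_0-1}(X_U/U,\ldots)(B_{n_0})_U=\tau_{n_0}(X_U/U,\ldots)$: Proposition~\ref{stabilization1} over $U$ (whose hypotheses $\lambda>\mu(\ba)-1$ and $(q-1)\lambda\in\Z$ are inherited, since extending an ideal does not increase the number of generators) then upgrades this to all levels $n\ge n_0$ over $U$, and a final application of Lemma~\ref{relative tau base change} transfers the equality to every $V'\to U$ as in the statement.

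For the generic point, put $C_n:=B_n\otimes_A K$ and $C:=\varinjlim_n C_n$. Since any $q^n$-th root of a fraction $a/b$ with $a,b\in A$ equals $a^{1/q^n}/b^{1/q^n}$ with $b^{1/q^n}$ already a unit, one computes $C_n\cong R_K\otimes_K K^{1/q^n}$ and $C\cong R_K\otimes_K K_{\mathrm{perf}}$, where $R_K:=R\otimes_A K$ and $K_{\mathrm{perf}}:=K^{1/p^\infty}$. Now $C$ is essentially of finite type over the field $K_{\mathrm{perf}}$, hence Noetherian, and each $C_n\to C$ is faithfully flat, being the base change $-\otimes_{K^{1/q^n}}K_{\mathrm{perf}}$ along a field extension. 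The ascending chain of ideals $\tau_n(X_\eta/\eta,\ldots)C\subseteq C$ therefore stabilizes, say from some index $N$ on. For $n>N$, the cokernel of the inclusion $\tau_{n-1}(X_\eta/\eta,\ldots)C_n\subseteq\tau_n(X_\eta/\eta,\ldots)$ of ideals of $C_n$ is a finitely generated $C_n$-module that becomes zero after the faithfully flat base change to $C$ (the two ideals having the same, stabilized, extension to $C$), hence is already zero. Thus the desired equality holds over $\eta$ at level $n_0:=N+1$.

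To spread out, Lemma~\ref{relative tau base change} (with $V'=\eta$) identifies that equality with the vanishing, after inverting $A\setminus\{0\}$, of the cokernel $M$ of $\tau_{n_0-1}B_{n_0}\subseteq\tau_{n_0}$, a finitely generated $B_{n_0}$-module. Hence some single $a\in A\setminus\{0\}$ kills $M$; setting $U:=\Spec A[1/a]\subseteq V$ and invoking Lemma~\ref{relative tau base change} again (together with flatness of localization, so that the cokernel commutes with restriction to $U$), we obtain $\tau_{n_0-1}(X_U/U,\ldots)(B_{n_0})_U=\tau_{n_0}(X_U/U,\ldots)$, and the first paragraph finishes the proof.

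The main obstacle is the generic stabilization of the second paragraph. Over a general base the rings $B_n$ genuinely vary with $n$, their direct limit is not Noetherian, and the transition maps, being built from the Frobenius of $A$, need not be flat, so the naive ascending-chain argument is unavailable; this is precisely the subtlety absent in the case of pairs. Passing to the generic point replaces the $B_n$ by the $C_n$, restoring at once the Noetherianity of the colimit $C$ and the faithful flatness of each $C_n\to C$, which is exactly what the cokernel-vanishing argument needs. The remaining points — the base-change compatibilities and the elementary bookkeeping around localization — are routine consequences of Lemma~\ref{relative tau base change}.
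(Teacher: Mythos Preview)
Your proof is correct and follows essentially the same route as the paper's. The paper's proof is deliberately terse: it obtains $U$ and $n_0$ with $(\tau_{n_0-1}B_{n_0})|_U=\tau_{n_0}|_U$ by invoking ``an argument similar to the proof of \cite[Proposition 3.3]{PSZ},'' and then concludes via Lemma~\ref{relative tau base change} and Proposition~\ref{stabilization1}, exactly as you do. Your second paragraph simply unpacks that citation: pass to the generic point so that the colimit $C=R_K\otimes_K K_{\mathrm{perf}}$ is Noetherian and each $C_n\to C$ is faithfully flat, stabilize there by the ascending chain condition, and spread out by killing the finitely generated cokernel with a single $a\in A\setminus\{0\}$. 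Your explanatory paragraph about why one must pass to the generic point (the $B_n$ vary, the limit is not Noetherian, the transitions are not flat) is also on target and matches the remark preceding the proposition that the stabilization ``is a bit more subtle than the case of pairs.''
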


\begin{proof}
By an argument similar to the proof of \cite[Proposition 3.3]{PSZ}, there exist a dense open subset $U \subseteq V$ and an integer $n_0$ such that 
\[
(\tau_{n_0-1}(X/V, \phi I, \ba^{\lambda} ) B_n)|_U = \tau_{n_0}(X/V, \phi I, \ba^{\lambda} )|_U.
\]
The assertion then follows from Lemma \ref{relative tau base change} and Proposition \ref{stabilization1}, 
\end{proof}

With notation as in Setting \ref{relative test setting}, assume that the morphism $X \to V=\Spec A$ is normal and $A$ is an $F$-finite regular integral domain with dualizing complex $\omega_A^{\bullet}$ such that $F^{!} \omega_A^{\bullet} \cong \omega_A^{\bullet}$.
We note that in this case, $B_m=R_{A^{1/q^m}}=R \otimes_A A^{1/q^m}$ is normal for every $m$ by \cite[p.184 Corollary]{Mat}.

We fix a canonical divisor $K_V$ of $V=\Spec A$ and write $M : = A((1-q) K_V)$.
Let $\Psi : M^{1/q} \to A$ be a generator of the free $F^e_* A$-module $\Hom_A(M^{1/q}, A)$ of rank one. 
For each integer $m \ge 1$, we define the $B_m$-module 
$N_m : = L \otimes_A M^{1/q^m}$ 
and the $B_m$-module homomorphism $\gamma_m: N_m^{1/q} \to B_m$ as the composite map 
\[
\gamma_m: N_m^{1/q} \cong L^{1/q} \otimes_{A^{1/q}} M^{1/q^{m+1}} \xrightarrow{\phi \otimes \Psi^{1/q^m}} B_1 \otimes_{A^{1/q}} A^{1/q^m} \cong B_m.
\]

Given a scheme $T$ of characteristic $p>0$ and an integer $n \ge 0$, the $n$-th iterated Frobenius morphism $F: T \to T$ induces a $T$-scheme structure on $T$, which is denoted by  $T^n$.
In our setting, $V^{em} \cong \Spec A^{1/q^m}$ as $V$-schemes and $X_{V^{em}} : = X \times_V V^{em} \cong \Spec B_m$. 

\begin{lem}\label{phi(tau)2}
With notation as above, we set $\mathfrak{c}_m : = \tau (X_{V^{em}}, \gamma_{m} (IB_{m}) , (\ba B_{m})^{\lambda}) \subseteq B_m$. Then 
\[
\tilde{\phi}_{m+1}((L \otimes_R \mathfrak{c}_m)^{1/q}) + (\ba^{\lceil \lambda/q \rceil} I) B_{m+1} = \tau(X_{V^{e(m+1)}}, \gamma_{m+1} (I B_{m+1}), (\ba B_{m+1})^{\lambda/q}).
\]
\end{lem}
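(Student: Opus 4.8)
The plan is to expand both sides via Definition \ref{defn absolute tau} and compare them term by term, mimicking the proof of Lemma \ref{phi(tau)}. Writing
\[
\mathfrak{c}_m = \sum_{i \ge 0}\gamma_m^i\big(((\ba B_m)^{\lceil q^i \lambda\rceil}(IB_m)N_m^{(i)})^{1/q^i}\big),
\]
\[
\tau(X_{V^{e(m+1)}},\gamma_{m+1}(IB_{m+1}),(\ba B_{m+1})^{\lambda/q}) = \sum_{j \ge 0}\gamma_{m+1}^j\big(((\ba B_{m+1})^{\lceil q^j \lambda/q\rceil}(IB_{m+1})N_{m+1}^{(j)})^{1/q^j}\big),
\]
the $j=0$ summand on the right is $(\ba B_{m+1})^{\lceil \lambda/q\rceil}(IB_{m+1}) = (\ba^{\lceil \lambda/q\rceil}I)B_{m+1}$, which is exactly the second summand on the left of the asserted equality. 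Since $L$ is invertible, hence flat, over $R$, the functor $L\otimes_R(-)$ commutes with arbitrary sums of submodules, and so do $(-)^{1/q}$ and $\tilde\phi_{m+1}$; therefore it remains to match, for each $i \ge 0$ (using $\lceil q^{i+1}(\lambda/q)\rceil = \lceil q^i \lambda\rceil$ and the fact that base change along $B_m\hookrightarrow B_{m+1}$ carries $(\ba B_m)^{\lceil q^i \lambda\rceil}(IB_m)$ to $(\ba B_{m+1})^{\lceil q^i \lambda\rceil}(IB_{m+1})$), the $j=i+1$ summand $\gamma_{m+1}^{i+1}\big(((\ba B_{m+1})^{\lceil q^i \lambda\rceil}(IB_{m+1})N_{m+1}^{(i+1)})^{1/q^{i+1}}\big)$ with $\tilde\phi_{m+1}\big((L\otimes_R\gamma_m^i(((\ba B_m)^{\lceil q^i \lambda\rceil}(IB_m)N_m^{(i)})^{1/q^i}))^{1/q}\big)$.

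The heart of the argument is the compatibility of the iterated maps: for any ideal $J'\subseteq B_m$ and $J := J'B_{m+1}$, one should have
\[
\gamma_{m+1}^{i+1}\big((JN_{m+1}^{(i+1)})^{1/q^{i+1}}\big) = \tilde\phi_{m+1}\big((L\otimes_R\gamma_m^i((J'N_m^{(i)})^{1/q^i}))^{1/q}\big).
\]
This comes down to the commutativity of a square whose vertical maps are $\gamma_{m+1}^{i+1}$ and $(\mathrm{id}_L\otimes\gamma_m^i)^{1/q}$, whose bottom map is $\tilde\phi_{m+1}$, and whose top map $(N_{m+1}^{(i+1)})^{1/q^{i+1}}\to(L\otimes_R(N_m^{(i)})^{1/q^i})^{1/q}$ is assembled from the projection-formula isomorphisms of Lemma \ref{projection formula} — using $L^{\otimes q^i}\otimes_R L^{(i)}\cong L^{(i+1)}$ and $M^{\otimes q^i}\otimes_A M^{(i)}\cong M^{(i+1)}$ — together with one application of the structural map $\Psi$, which converts the twist $(M^{(i+1)})^{1/q^{m+1}}$ into $(M^{(i)})^{1/q^m}$ via $(M^{(i+1)})^{1/q^{m+1}}\cong(M^{1/q}\otimes_A M^{(i)})^{1/q^m}\xrightarrow{(\Psi\otimes\mathrm{id})^{1/q^m}}(M^{(i)})^{1/q^m}$. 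I would prove this square by induction on $i$, unwinding the inductive definitions of $\gamma_m$ and $\gamma_{m+1}$ (Definition \ref{defn absolute tau}) and of $\tilde\phi_n$, $\phi^n$ (Setting \ref{relative test setting}) and invoking Lemma \ref{projection formula} repeatedly; the base case $i=0$ is the factorization $\gamma_{m+1} = \tilde\phi_{m+1}\circ(\mathrm{id}_{L^{1/q}}\otimes\Psi^{1/q^{m+1}})$, read off from the definitions of $\gamma_{m+1}$ and $\tilde\phi_{m+1}$ after the canonical identifications $N_{m+1}^{1/q}\cong L^{1/q}\otimes_{A^{1/q}}M^{1/q^{m+2}}$ and $(L\otimes_R B_m)^{1/q}\cong L^{1/q}\otimes_{A^{1/q}}A^{1/q^{m+1}}$.

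I expect the main obstacle to be precisely this square: the content is not deep, but it demands careful bookkeeping of the Frobenius twists $(-)^{1/q^{\bullet}}$, of the tensor powers $L^{(\bullet)}$ and $M^{(\bullet)}$, and of which ring ($R$, $A$, $A^{1/q^{\bullet}}$, or $B_{\bullet}$) each tensor product is formed over at every stage of the iteration. The argument should run in parallel to the proof of Lemma \ref{phi(tau)}, the one genuinely new ingredient being the extra bookkeeping forced by the presence of the canonical map $\Psi$ in the definition of the $\gamma_m$; everything else, including the reduction in the first paragraph, is formal.
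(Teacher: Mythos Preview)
Your approach is essentially the paper's: expand both sides term by term, match the $j=0$ summand with $(\ba^{\lceil \lambda/q\rceil}I)B_{m+1}$, and for $j=i+1$ build a commutative diagram relating $\gamma_{m+1}^{i+1}$ to $\tilde\phi_{m+1}\circ(\mathrm{id}\otimes\gamma_m^i)^{1/q}$ via a top map that uses one application of $\Psi$. The paper presents this as a ladder with one rung per iteration rather than collapsing it into a single square, but the content is the same.

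There is, however, one genuine ingredient you have not stated, and without it the argument only yields an inclusion. You write that the desired equality ``comes down to the commutativity of a square'' whose top map involves $\Psi$. But commutativity of
\[
\xymatrix{
(N_{m+1}^{(i+1)})^{1/q^{i+1}} \ar[r]^-{f_i} \ar[d]_{\gamma_{m+1}^{i+1}} & (L\otimes_R (N_m^{(i)})^{1/q^i})^{1/q} \ar[d]^{\tilde\phi_{m+1}\circ(\mathrm{id}\otimes\gamma_m^i)^{1/q}} \\
B_{m+1} \ar@{=}[r] & B_{m+1}
}
\]
gives only $\gamma_{m+1}^{i+1} = \tilde\phi_{m+1}\circ(\mathrm{id}\otimes\gamma_m^i)^{1/q}\circ f_i$. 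To pass from this to an equality of images on the submodules $(J N_{m+1}^{(i+1)})^{1/q^{i+1}}$ and $(L\otimes_R (J'N_m^{(i)})^{1/q^i})^{1/q}$ you must know that $f_i$ is \emph{surjective}; otherwise you only get $\gamma_{m+1}^{i+1}\bigl((JN_{m+1}^{(i+1)})^{1/q^{i+1}}\bigr)\subseteq \tilde\phi_{m+1}\bigl((L\otimes_R\gamma_m^i((J'N_m^{(i)})^{1/q^i}))^{1/q}\bigr)$. The paper supplies exactly this point: since $A$ is regular, $\Psi:M^{1/q}\to A$ is surjective (it generates $\Hom_A(M^{1/q},A)$ as a free $F^e_*A$-module and $A$ is $F$-split), hence each $f_i$ is surjective. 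You should add this observation; once it is in place, your plan goes through and coincides with the paper's proof.
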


\begin{proof}
First note that for any integer $n \ge 0$,  
\begin{align*}
N_{m+1}^{(n+1)} &\cong L^{\otimes q^n} \otimes_R L^{(n)} \otimes_A (M^{1/q^m})^{(n)} \otimes_{A^{1/q^m}} M^{1/q^{m+1}}\\
&\cong  L^{\otimes q^n} \otimes_R N_m^{(n)} \otimes_{A^{1/q^m}} M^{1/q^{m+1}}, 
\end{align*}
where $(M^{1/q^m})^{(n)}:=(M^{1/q^m})^{\otimes_{A^{1/q^m}}\frac{q^n-1}{q-1}}$. 
We define the $B_m^{1/q^{n+1}}$-module homomorphism $f_n : (N_{m+1}^{(n+1)})^{1/q^{n+1}} \to (L \otimes_R (N_m^{(n)})^{1/q^n})^{1/q}$ as the composite map 
\[
f_n : (N_{m+1}^{(n+1)})^{1/q^{n+1}} \xrightarrow{(\mathrm{id} \otimes \mathrm{id} \otimes \Psi^{1/q^{m}})^{1/q^{n+1}}}(L^{\otimes q^n} \otimes_R N_m^{(n)})^{1/q^{n+1}}
\xrightarrow{\sim} (L \otimes_R (N_m^{(n)})^{1/q^n})^{1/q} .
\]
Then the following diagram commutes.
\[
\xymatrix{
(N_{m+1}^{(n+1)})^{1/q^{n+1}}  \ar^-{\gamma_{m+1}}[d]\ar_-{f_n}[r]
& (L \otimes_R (N_m^{(n)})^{1/q^n})^{1/q}  \ar^-{(\mathrm{id} \otimes \gamma_m)^{1/q}}[d] \\
(N_{m+1}^{(n)})^{1/q^{n}}  \ar^-{\gamma_{m+1}}[d]\ar_-{f_{n-1}}[r]
& (L \otimes_R (N_m^{(n-1)})^{1/q^{n-1}})^{1/q}  \ar^-{(\mathrm{id} \otimes \gamma_m)^{1/q}}[d] \\
\vdots \ar[d] & \vdots \ar[d]\\
(N_{m+1})^{1/q}  \ar^-{\gamma_{m+1}}[d]\ar_-{f_0}[r] &
(L \otimes_R B_m)^{1/q} \ar^-{\tilde{\phi}_{m+1}}[ld]\\
B_{m+1} &
}\]

Since $A$ is regular, the morphism $\Psi$ is surjective and so is $f_n$.
Therefore, 
\[
\tilde{\phi}_{m+1}((L \otimes_R \gamma_m^n( (\ba^{\lceil q^n \lambda \rceil} I N_m^{(n)} )^{1/q^n} ))^{1/q}) = \gamma_{m+1}^{n+1}( (\ba^{\lceil q^n \lambda \rceil} I N_{m+1}^{(n+1)} )^{1/q^{n+1}} ),
\]
which proves the assertion.
\end{proof}

\begin{prop}[$\textup{cf. \cite[Theorem 4.13]{PSZ}}$]\label{tau vs fiber}
With notation as above, assume further that there exists an integer $l>0$ such that $q^l(q-1) \lambda$ is an integer.
Then there exist a dense open subset $U \subseteq V$ and an integer $n_1 \ge 1$ satisfying the following: for any integer $m \ge n_1$ and any morphism $V' \to V$ with image in $U$, where $V'=\Spec A'$ is an $F$-finite regular integral affine scheme with dualizing complex $\omega_{V'}^{\bullet}$ such that $F^!\omega_{V'}^{\bullet} \cong \omega_{V'}^{\bullet}$ and $X':=X \times_V V'$ is an integral scheme, we have
\[
\tau_{m}(X'/V',  \phi' I', {\ba'}^{\lambda} ) = \tau(X'_{V'^{em}}, \gamma'_m (I' B'_m), (\ba' B'_m)^{\lambda} ).
\]
Here $\gamma'_m:(N'_m)^{1/q}:=(L' \otimes_{A'} A'((1-q)K_{V'})^{1/q^m})^{1/q} \to B'_m$ denotes the $B'_m$-module homomorphism induced by $\phi':(L')^{1/q} \to B_1'$.
\end{prop}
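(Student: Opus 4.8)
The plan is to prove that, over a suitable dense open subset of $V$, the limiting relative test ideal $\tau_m(X/V,\phi I,\ba^\lambda)$ coincides with the classical test ideal $\mathfrak{c}_m:=\tau(X_{V^{em}},\gamma_m(IB_m),(\ba B_m)^\lambda)$ as soon as $m$ is large, and then to transport this along the base changes $V'\to V$. The mechanism is that both sides obey the \emph{same} recursion: Lemma \ref{phi(tau)} gives $\tilde{\phi}_{m+1}((L\otimes_R\tau_m(X/V,\phi I,\ba^\lambda))^{1/q})+(\ba^{\lceil\lambda/q\rceil}I)B_{m+1}=\tau_{m+1}(X/V,\phi I,\ba^{\lambda/q})$, while Lemma \ref{phi(tau)2} gives the identical identity with $\tau_\bullet$ replaced by $\mathfrak{c}_\bullet$; and, for $m$ large, $\tau_m(X/V,\phi I,\ba^\lambda)$ stabilizes by Proposition \ref{stabilization2}, which is exactly what lets one pass between the finite truncation defining $\tau_m$ and the a priori infinite sum defining $\mathfrak{c}_m$.

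First I would carry out the standard reductions needed for the stabilization machinery. Replacing $\phi$ by its $l$-fold iterate (and reindexing the $B_m$ and the limiting test ideals accordingly), the hypothesis $q^l(q-1)\lambda\in\Z$ guarantees that the fractional parts of $q^n\lambda$ are eventually constant in $n$, which is precisely the input needed for the Skoda identity of Lemma \ref{Skoda} and for Proposition \ref{stabilization1}. Enlarging $\lambda$ by a positive integer — which on both sides only multiplies the test ideal by a power of $\ba$ (on the relative side by $\sO$-linearity of the $\phi^i$ together with Lemma \ref{projection formula}(2), on the absolute side by the Skoda theorem for classical test ideals) — I further reduce to the case $\lambda>\mu(\ba)-1$, so that Proposition \ref{stabilization2} applies and the original statement is recovered afterwards. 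Throughout I use that $\mathfrak{c}_m$ is a well defined classical test ideal on the $F$-finite normal ring $B_m$, normal by \cite[p.184 Corollary]{Mat}, with the dualizing-complex hypothesis inherited from $A$.

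With these in hand, Proposition \ref{stabilization2} yields a dense open $U\subseteq V$ and an integer $n_0$ such that, after any base change $V'=\Spec A'\to V$ with image in $U$ as in the statement, one has $\tau_{m-1}(X'/V',\phi' I',{\ba'}^\lambda)B'_m=\tau_m(X'/V',\phi' I',{\ba'}^\lambda)$ for all $m\ge n_0$. Since everything in sight is compatible with such base changes — Lemma \ref{relative tau base change} for $\tau_m$, and flat base change for classical test ideals along the regular morphism $V'\to V$ for $\mathfrak{c}_m$ (here $X'=X\times_V V'$ is integral and $V'$ is $F$-finite regular with $F^!\omega^\bullet_{V'}\cong\omega^\bullet_{V'}$) — it suffices to prove $\tau_m(X/V,\phi I,\ba^\lambda)=\mathfrak{c}_m$ for all sufficiently large $m$ over $V$ itself, and then set $n_1$ to be such a bound. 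The inclusion $\tau_m(X/V,\phi I,\ba^\lambda)\subseteq\mathfrak{c}_m$ is the easy direction, since under the dictionary provided by the commutative diagram in the proof of Lemma \ref{phi(tau)2} — the $\Psi$-descents there being surjective because $A$ is regular — every generating piece $\phi^i((\ba^{\lceil q^i\lambda\rceil}IL^{(i)})^{1/q^i})B_m$ of the left side lands among the generating pieces of $\mathfrak{c}_m$. For the reverse inclusion one writes each generating piece $\gamma_m^j((\ba^{\lceil q^j\lambda\rceil}IN_m^{(j)})^{1/q^j})B_m$ of $\mathfrak{c}_m$ via the same diagram: for $j\le m$ it is already a generating piece of $\tau_m(X/V,\phi I,\ba^\lambda)$, while for $j>m$ it is the image under $\tilde{\phi}_{m+1}\circ\tilde{\phi}_{m+2}\circ\cdots$ (composed with $\Psi$-descents) of a generating piece of $\tau_m$ at level $m$, so it lies in $\tilde{\phi}_{m+1}((L\otimes_R\tau_m(X/V,\phi I,\ba^\lambda))^{1/q})+(\cdots)$; but Lemma \ref{phi(tau)} together with the stabilization $\tau_m(X/V,\phi I,\ba^\lambda)B_{m+1}=\tau_{m+1}(X/V,\phi I,\ba^\lambda)$ shows this ideal has already been absorbed into $\tau_m(X/V,\phi I,\ba^\lambda)$ after pulling back along $B_m\to B_{m+1}$. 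Combining the two inclusions gives the equality, and the base-change compatibilities promote it to the statement over every admissible $V'\to V$ with image in $U$.

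The main obstacle is the reverse inclusion $\mathfrak{c}_m\subseteq\tau_m(X/V,\phi I,\ba^\lambda)$: $\mathfrak{c}_m$ is intrinsically an infinite sum $\sum_{j\ge 0}\gamma_m^j(\cdots)$, and one must show that the truncated relative test ideal $\tau_m$, which records only the levels $i=0,\dots,m$, has already absorbed the tail $j>m$ — and uniformly in the base $V'$. This is exactly what the stabilization of Proposition \ref{stabilization2} is for, but carrying it out requires delicate bookkeeping: matching the iterated maps $\gamma_m^j$ against the $\phi^i$ through the commutative diagram of Lemma \ref{phi(tau)2}, keeping track of the $\Psi$-descents and of how the ceilings $\lceil q^j\lambda\rceil$ in the $\ba$-exponents shift; it is here that the integrality hypothesis $q^l(q-1)\lambda\in\Z$ and the reduction to $\lambda>\mu(\ba)-1$ do their work.
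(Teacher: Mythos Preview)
Your core mechanism --- identifying $\tau_m$ with $\mathfrak{c}_m$ by observing that both satisfy the same recursion (Lemmas~\ref{phi(tau)} and~\ref{phi(tau)2}) and then invoking the stabilization of Proposition~\ref{stabilization2} to close the loop --- is exactly the paper's approach, just spelled out in more detail than the paper's one-line ``an argument similar to the proof of Proposition~\ref{stabilization1}''. However, two of your reductions have genuine gaps.

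First, the reduction to $V'=V$ is not justified. The morphism $V'\to V$ in the statement is \emph{arbitrary} with image in $U$: it need not be flat, let alone regular (the intended application in Theorem~\ref{absolute tau vs fiber} takes $V'=\Spec k$ for a perfect point). So there is no ``flat base change for classical test ideals along the regular morphism $V'\to V$'' available for $\mathfrak{c}_m$; moreover $\gamma'_m$ is built from $K_{V'}$, not from the pullback of $K_V$, so $\mathfrak{c}'_m$ is not even formally the base change of $\mathfrak{c}_m$. The fix is simply not to reduce: run your two-inclusion argument directly over each admissible $V'$. This works because Proposition~\ref{stabilization2} already gives the stabilization uniformly over all such $V'$, and Lemmas~\ref{phi(tau)} and~\ref{phi(tau)2} hold over $V'$ since $V'$ is $F$-finite regular with $F^!\omega_{V'}^\bullet\cong\omega_{V'}^\bullet$. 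This is what the paper does.

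Second, your reduction to the range $\lambda>\mu(\ba)-1$ via ``enlarging $\lambda$ by a positive integer'' does not go through. The Skoda identity $\tau_m(\lambda+1)=\ba\,\tau_m(\lambda)$ of Lemma~\ref{Skoda} itself requires $\lambda>\mu(\ba)-1$, so it cannot be used to enter that range; the $\sO$-linearity argument you cite only gives the inclusion $\ba^k\tau_m(\lambda)\subseteq\tau_m(\lambda+k)$ (since $(\ba^k)^{[q^i]}\subsetneq\ba^{kq^i}$ in general), and even if equality held on both sides, $\ba^k I=\ba^k J$ does not imply $I=J$. The paper instead replaces $\lambda$ by $q^s\lambda$ for $s$ large --- so that $(q-1)(q^s\lambda)\in\Z$ and $q^s\lambda>\mu(\ba)-1$ simultaneously --- and then uses the \emph{common} recursion of Lemmas~\ref{phi(tau)} and~\ref{phi(tau)2} (applied $s$ times) to transport the equality $\tau_m=\mathfrak{c}_m$ from exponent $q^s\lambda$ at level $m$ down to exponent $\lambda$ at level $m+s$. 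This both achieves the reduction and avoids any appeal to Skoda outside its range of validity.
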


\begin{proof}
By Lemmas \ref{phi(tau)} and \ref{phi(tau)2}, the problem is reduced to the case where $(q-1) \lambda$ is an integer and $\lambda> \mu(\ba) -1$. 
In this case, taking $U$ and $n_0$ as in Proposition \ref{stabilization2}, we see that $n_1 : =n_0$ satisfies the assertion by an argument similar to the proof of Proposition \ref{stabilization1}. 
The reader is referred to the proof of \cite[Theorem 4.13]{PSZ} for more details.
\end{proof}

\begin{thm}[$\textup{cf. \cite[Corollary 4.15]{PSZ}}$]\label{absolute tau vs fiber}
Suppose that $V$ is an $F$-finite regular integral scheme of characteristic $p>0$, $f: X \to V$ is a normal morphism essentially of finite type from a normal integral scheme $X$, and $\Delta$ is an effective $\Q$-Weil divisor on $X$ such that $K_X+\Delta$ is $\Q$-Cartier with index not divisible by $p$.  
Let $\ba \subseteq \sO_X$ be a nonzero coherent ideal sheaf and $\lambda>0$ be a rational number.
Then there exist a dense open subset $U \subseteq V$ and an integer $n_2 \ge 1$ such that 
for every positive multiple $n$ of $n_2$  
and every perfect point $u : \Spec k \to U$ of $U$, one has 
\[
\tau(X_{V^{n}}, h^* \Delta, (\ba \sO_{X_{V^{n}}}) ^\lambda) \sO_{X_u} = \tau(X_u, \Delta|_{X_u}, (\ba \sO_{X_u})^\lambda),
\]
where $h^* \Delta$ is the pullback of $\Delta$ by the projection $h: X_{V^{n}} : = X \times_V V^{n} \to X$.
\end{thm}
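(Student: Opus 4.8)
The plan is to run, within the relative-test-ideal formalism of Setting~\ref{relative test setting}, the strategy of \cite[Corollary~4.15]{PSZ}: one combines the stabilization statement Proposition~\ref{tau vs fiber}, applied twice --- once with the trivial base change $V'=V$ and once over the perfect point $u$ --- with Example~\ref{phi and divisor} (which identifies the test ideal of a Cartier-linear map with the test ideal of a $\Q$-divisor), Lemma~\ref{relative tau base change}, and the triviality of Frobenius on a perfect field. Since the assertion is local on $X$ and on $V$, after shrinking we may assume that $V=\Spec A$ with $A$ an $F$-finite regular domain carrying a dualizing complex with $F^!\omega_A^\bullet\cong\omega_A^\bullet$, that $X=\Spec R$ is flat and essentially of finite type over $A$, and that $X\to V$ is a normal morphism (so that $A\hookrightarrow R$). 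Let $r$ be the Cartier index of $K_X+\Delta$; as $p\nmid r$ and $\lambda\in\Q$, choose $e\ge1$ with $q:=p^e\equiv1\pmod r$ and, after enlarging $e$ and picking an auxiliary exponent if needed, so that the numerical hypotheses on $\lambda$ in Proposition~\ref{tau vs fiber} hold. Put $L:=\sO_X\bigl((1-q)(K_X+\Delta)\bigr)$, an invertible sheaf, and let $\phi\colon L^{1/q}\to R_{A^{1/q}}$ be the relative Cartier-linear map attached to $(X/V,\Delta)$, obtained by dualizing $\sO_X(-\lceil(q-1)\Delta\rceil)\hookrightarrow\sO_X$ through relative Grothendieck duality for $\Spec A^{1/q}\to\Spec A$ and $\omega_A^\bullet$; it is normalized so that the maps $\gamma_m$ derived from $\phi$ in Setting~\ref{relative test setting} become, after the relevant base changes, the absolute Cartier maps of $(X_{V^{em}},h^*\Delta)$ and of the fibre pairs. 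Finally, by a standard spreading-out argument for test elements, after shrinking $V$ to a dense open $U_0$ fix a nonzero $c\in R$ whose image is a big sharp test element for every fibre pair $(X_u,\Delta|_{X_u},(\ba\sO_{X_u})^\lambda)$ with $u\in U_0$ and for every twist $(X_{V^{em}},h^*\Delta,(\ba\sO_{X_{V^{em}}})^\lambda)$, and set $I:=cR$.

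Applying Proposition~\ref{tau vs fiber} to the data $(\phi,I,\ba,\lambda)$ produces a dense open $U\subseteq U_0$ and an integer $n_1\ge1$ such that, for every $m\ge n_1$ and every base change $V'\to V$ with image in $U$ as in that proposition, $\tau_m(X'/V',\phi' I',{\ba'}^{\lambda})=\tau\bigl(X'_{V'^{em}},\gamma'_m(I'B'_m),(\ba' B'_m)^{\lambda}\bigr)$. Taking $V'=V$, the right-hand side is $\tau(X_{V^{em}},\gamma_m(IB_m),(\ba B_m)^{\lambda})$; and since $B_m$ is $F$-finite normal with $F^!\omega^\bullet\cong\omega^\bullet$, $(q-1)(K_{X_{V^{em}}}+h^*\Delta)$ is Cartier, $\gamma_m$ is the Cartier map of $h^*\Delta$, and $IB_m\subseteq\tau(X_{V^{em}},h^*\Delta)$ by the choice of $c$, Example~\ref{phi and divisor} gives, writing $n=em$,
\[
\tau\bigl(X_{V^{n}},h^*\Delta,(\ba\sO_{X_{V^{n}}})^{\lambda}\bigr)=\tau_m\bigl(X/V,\phi I,\ba^{\lambda}\bigr)\qquad\text{for all }m\ge n_1.
\]

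Now fix $m\ge n_1$, put $n:=em$, and let $u\colon\Spec k\to U$ be a perfect point. Since $k$ is perfect, $A\to k$ extends uniquely to $A^{1/q^m}\to k$, so $u$ lifts to a perfect point $u_m$ of $V^{em}\cong\Spec A^{1/q^m}$ with $X_{V^{em}}\times_{V^{em}}\Spec k\cong X_u$ over $X$; hence restricting the left-hand side of the last display along $u_m$ is the same as restricting $\tau_m(X/V,\phi I,\ba^{\lambda})$ along $u$. By Lemma~\ref{relative tau base change} applied to $\Spec k\to V$, together with $k^{1/q^m}=k$, this restriction equals $\tau_m\bigl(X_u/\Spec k,\phi' I',(\ba\sO_{X_u})^{\lambda}\bigr)$. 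Applying Proposition~\ref{tau vs fiber} once more with $V'=\Spec k$ (a valid choice, as $k$ is an $F$-finite regular domain with $F^!k\cong k$), using $(\Spec k)^{em}=\Spec k$ and Example~\ref{phi and divisor} again (with $c\sO_{X_u}\subseteq\tau(X_u,\Delta|_{X_u})$), this in turn equals $\tau\bigl(X_u,\Delta|_{X_u},(\ba\sO_{X_u})^{\lambda}\bigr)$. Chaining the equalities yields the theorem with $n_2:=e\,n_1$, because every positive multiple of $n_2$ is of the form $em$ with $m\ge n_1$. If a fibre $X_u$ happens not to be integral, it is a finite disjoint union of normal integral schemes and the argument applies to each summand, after further shrinking $U$ so that the number of geometric components of the fibres over $U$ is constant.

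The step I expect to be the main obstacle is the set-up in the first paragraph: one must construct the single Cartier-linear map $\phi$ (and the element $c$) so that, after the purely inseparable base changes $A\to A^{1/q^m}$ and $A\to k$, the induced maps $\gamma_m$ and their fibrewise analogues are \emph{literally} the absolute Cartier maps of $h^*\Delta$ on $X_{V^{em}}$ and of $\Delta|_{X_u}$ on $X_u$ --- this compatibility being exactly what allows Example~\ref{phi and divisor} to be invoked at both ends of the chain --- and that $c$ is simultaneously a test element for all of these spaces over a single fixed dense open of $V$. Once this relative-duality bookkeeping is in place, the rest is a formal manipulation of Proposition~\ref{tau vs fiber} and Lemma~\ref{relative tau base change}.
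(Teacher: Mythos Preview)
Your approach is correct and essentially the same as the paper's: reduce to the affine case with a suitable dualizing complex on $V$, set up the relative Cartier map $\phi$, and combine Proposition~\ref{tau vs fiber}, Lemma~\ref{relative tau base change}, and Example~\ref{phi and divisor}, exactly as in \cite[Corollary~4.15]{PSZ}. The paper phrases the canonical-divisor bookkeeping via the relative divisor $K_{X/V}$ (noting that $V$ is Gorenstein, so $K_{X/V}$ is already a canonical divisor of $X$) rather than $K_X$ directly, which is precisely the clean way to resolve the compatibility issue you flag in your final paragraph; otherwise the argument matches.
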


\begin{proof}
Shrinking $V$ if necessary, we may assume that $\Delta$ does not contain any fiber of $f$ in its support and $V$ is affine with dualizing complex $\omega_V^{\bullet}$ such that $F^{!} \omega_V^{\bullet} \cong \omega_V^{\bullet}$.
Taking an affine open covering of $X$, we may assume that $X$ is also affine.
Fix a relative canonical divisor $K_{X/V}$ on $X$.
Since $V$ is Gorenstein, it follows from \cite[Lemma 0BZL]{Sta} that $K_{X/V}$ is a canonical divisor of $X$, and in particular, by \cite[V. Theorem 3.1]{Hart}, the $\Q$-Weil divisor $K_{X/V} + \Delta$ is $\Q$-Cartier with index not divisible by $p$. 
Choose a power $q$ of $p$ such that $(q-1)(K_{X/V} + \Delta)$ is Cartier and $q^l(q-1) \lambda$ is an integer for some $l \ge 1$.
The assertion then follows from Proposition \ref{tau vs fiber}, Lemma \ref{relative tau base change}, and Example \ref{phi and divisor}.
The reader should compare this proof with that of \cite[Corollary 4.15]{PSZ}. 
\end{proof}


\end{document}